\newtheorem{theorem}{Theorem}[section]
\newtheorem{proposition}[theorem]{Proposition}
\newtheorem{lemma}[theorem]{Lemma}
\newtheorem{definition}[theorem]{Definition}
\theoremstyle{definition}
\newtheorem{remark}[theorem]{Remark}
\newcounter{lettered}
\newtheorem{letteredtheorem}[lettered]{Theorem}
\newcommand{\Z}{\mathbb{Z}}
\newcommand{\Q}{\mathbb{Q}}
\DeclareMathOperator{\hocolim}{{\rm hocolim}}
\DeclareMathOperator{\fib}{{\rm fib}}
\DeclareMathOperator{\pt}{{\rm pt.}}
\DeclareMathOperator{\tors}{{\rm tors}}
\DeclareMathOperator{\torsfree}{{\rm torsfree}}
\newcommand{\Zp}{\hat{\Z}_{p}}
\newcommand{\Qp}{\hat{\Q}_{p}}
\newcommand{\Ab}{{\rm Ab}}
\newcommand{\ob}{{\rm ob}}
\newcommand{\Fin}{{\rm Fin}}
\DeclareMathOperator{\Ext}{{\rm Ext}}
\DeclareMathOperator{\Spec}{{\rm Spec}}
\newcommand{\Zpx}{\Zp^{\times}}
\DeclareMathOperator{\im}{{\rm im\:}}
\newcommand{\Sp}{S_{p}^{\wedge}}
\newcommand{\KUp}{KU_p^{\wedge}}
\newcommand{\Gal}{{\rm Gal}}
\newcommand{\red}[1]{\textcolor{red}{(#1)}}
\newcommand{\gal}[1]{\operatorname{Gal}(#1)}
\title{Iwasawa invariants of finite spectra}
\author{Austin Maison and Andrew Salch}
\date{\today}
\begin{document}

\maketitle
\abstract{We calculate the classical Iwasawa invariants of the Iwasawa modules associated to the $p$-adic topological $K$-theory of finite spectra. We show that the graded average of the orders of $n$ consecutive $K(1)$-local homotopy groups of a finite spectrum $X$ grows asymptotically like $\frac{-\log_p(n)}{2}$ times the total Iwasawa $\lambda$-invariant of $X$. We show that the Iwasawa $\mu$-invariants of finite spectra are all zero. Finally, we prove a spectral analogue of a weak form of the Iwasawa Main Conjecture, describing the orders of the $K(1)$-local homotopy groups of a certain ``torsion-free replacement'' of $X$ in terms of the characteristic polynomials of the Iwasawa modules associated to $X$.}


\section{Introduction}

\subsection{The point of this paper}

Throughout this document, we will let $p$ be an odd prime.

In classical Iwasawa theory, an {\em Iwasawa module} is a module over the group algebra $\hat{\mathbb{Z}}_p[\hat{\mathbb{Z}}_p]$. 
Iwasawa modules arise naturally in the study of towers of Galois extensions of number fields 
\begin{align}
\label{tower 1}    F = F_0 \subseteq F_1 \subseteq F_2 \subseteq \cdots \subseteq F_{\infty} = \bigcup_n F_n
\end{align}
such that, for each $n$, the Galois group $\gal{F_n/F}$ is isomorphic to $\mathbb{Z}/p^n\mathbb{Z}$. 
Iwasawa's structure theory for Iwasawa modules defines three important algebraic invariants of an Iwasawa module: the $\lambda$-invariant, the $\mu$-invariant, and the characteristic polynomial. The $\lambda$- and $\mu$-invariants are integers, while the characteristic polynomial is an element of the polynomial ring $\Zp[T]$. 

Iwasawa showed that the class groups of the number fields in \eqref{tower 1} naturally fit together to form an Iwasawa module whose $\lambda$- and $\mu$-invariants satisfy the equality 
\begin{align} \label{hfn eq 1} h_{F_n}^{(p)} &= p^{n\lambda + p^n \mu + \nu}\end{align}
for some integer $\nu$ and for all sufficiently large $n$. Here $h_{F_n}^{(p)}$ is the $p$-part of the class number of $F_n$. The point is that, if we consider how the $p$-part of the class group grows as we move up the tower \eqref{tower 1}, the asymptotic rate of growth is controlled by the $\lambda$- and $\mu$-invariants. A fuller introduction to the $\lambda$- and $\mu$-invariants is given in \cref{The Classical Iwasawa Main Conjecture}.

Meanwhile, the characteristic polynomial of the Iwasawa module is used to define an {\em algebraic} $p$-adic $L$-function of the tower \eqref{tower 1}. In \cref{background section} we give a fuller exposition of the number-theoretic background, including the {\em analytic} $p$-adic $L$-function and the Iwasawa Main Conjecture, i.e., the relationship between the algebraic and analytic $p$-adic $L$-functions.

Iwasawa modules also arise in stable homotopy theory, although they have not received nearly as much study in stable homotopy theory as in number theory. Via the stable Adams operations, the profinite group $\Zpx$ acts on the $p$-adic $K$-theory of any spectrum\footnote{For readers with a background in number theory but not in stable homotopy theory, we remark that the main theorems in this paper concern {\em finite} spectra. The suspension spectrum of every finite pointed CW-complex is a finite spectrum, and conversely, every finite spectrum is a finite desuspension of the suspension spectrum of a finite CW-complex. Our point is that, for a reader who is unfamiliar with spectra, it is harmless to think of finite spectra as simply {\em the stable homotopy types of finite pointed CW-complexes}, as well as formal desuspensions thereof. In particular, all the theorems in this paper about $K(1)$-local homotopy groups of finite spectra are also theorems about $K(1)$-local homotopy groups of finite pointed CW-complexes. }. Using the isomorphism of profinite groups $\Zpx \cong \Zp\times \mathbb{F}_p^{\times}$, one can canonically split the $p$-adic $K$-theory of any spectrum $X$ into $2(p-1)$ summands \[\epsilon_0(\KUp)^0(X),\ \epsilon_1(\KUp)^0(X),\ \dots,\epsilon_{p-2}(\KUp)^0(X),\ \epsilon_0(\KUp)^{-1}(X),\ \epsilon_1(\KUp)^{-1}(X),\ \dots,\epsilon_{p-2}(\KUp)^{-1}(X),\] each of which is an Iwasawa module. This splitting was already given in 1969 by Adams in \cite{MR0251716}, although to our knowledge the first published mention of Iwasawa theory in connection with this splitting was not until Ravenel's paper \cite{MR737778} in 1984. Steve Mitchell wrote several papers \cite{MR1367298},\cite{MR2181837} about how the Iwasawa-theoretic arguments used by number theorists could be seen in terms of algebraic $K$-theory spectra, but the only published paper about Iwasawa theory {\em of spectra themselves}---that is, the only paper which applies Iwasawa theory to draw conclusions about topology, not about number theory---is the 2007 paper \cite{MR2327028} of Rebekah Hahn and Steve Mitchell. Hahn and Mitchell focused on category-theoretic results, e.g. a classification of the thick subcategories of the homotopy category of weakly dualizable $L_{K(1)}S^0$-module spectra, given in terms of subsets of $\Spec \Zp[\Zpx]$. We review some of Hahn--Mitchell's work in \cref{Review of Hahn--Mitchell's results}.

By contrast, the aim of this paper is computational, rather than category-theoretic: we calculate the Iwasawa invariants of all finite spectra, express them in topological terms, and we prove topological analogues of well-known Iwasawa-theoretic results in number theory. We will now sketch the three main results in this paper---one for each of the classical Iwasawa invariants ($\lambda$-invariant, $\mu$-invariant, and characteristic polynomial)---together with their number-theoretic precedents.

\subsection{The $\lambda$-invariant}

The formula \eqref{hfn eq 1}, above, shows the precise sense in which the $\lambda$-invariant (together with the $\mu$-invariant, but this vanishes, as we explain below) controls the asymptotic rate of growth of the $p$-part of the class number in an appropriate tower of number fields. 
Our topological analogue, stated below in Theorem \ref{intro thm on lambda and euler char}, is that {\em graded average of the orders of $n$ consecutive $K(1)$-local homotopy groups of a spectrum grows asymptotically like $\frac{-\log_p(n)}{2}$ times the total $\lambda$-invariant of the spectrum.} To be clear, throughout this paper, $\log_p$ denotes the ordinary base $p$ logarithm, not any kind of $p$-adic logarithm. 
``Graded averages'' are explained in \cref{Asymptotic graded averages.}, but the idea is very simple: instead of  $\frac{1}{n}\sum_{j=1}^n \left| \pi_j(L_{K(1)}X)\right|$, the average of the orders of the first $n$ $K(1)$-local homotopy groups of $X$, we count the odd-degree homotopy groups as {\em negative} and the even-degree homotopy groups as {\em positive}. This yields the alternating sum $\frac{1}{n}\sum_{j=1}^n (-1)^j\left| \pi_j(L_{K(1)}X)\right|$. In our asymptotic analysis, we also allow skipping finitely many initial homotopy groups, i.e., we consider the asymptotic properties of the alternating sum $\frac{1}{n}\sum_{j=1+m}^{n+m} (-1)^j\left| \pi_j(L_{K(1)}X)\right|$ as both $m$ and $n$ go to infinity.
\begin{letteredtheorem}[Theorem \ref{lambda and euler char}]\label{intro thm on lambda and euler char}
For all finite spectra $X$, the limit
\begin{equation*}
\lim_{m\rightarrow\infty} \lim_{n\rightarrow\infty} \frac{\frac{1}{n}\sum_{j=1+m}^{n+m} (-1)^j \left| \pi_jL_{K(1)}X\right|}{-\lambda(X)\cdot \log_p(n)/2}
\end{equation*}
is equal to $1$.
\end{letteredtheorem}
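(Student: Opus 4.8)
The plan is to translate the homotopy of $L_{K(1)}X$ into $p$-adic linear algebra and then into an elementary counting estimate. For a finite spectrum $X$ one has the standard descent equivalence $L_{K(1)}X\simeq(\KUp\wedge X)^{h\Zpx}$. Using $\Zpx\cong\F_p^{\times}\times\Zp$, taking $\F_p^{\times}$-fixed points (exact, since $\F_p^{\times}$ is prime to $p$) and then $\Zp$-homotopy fixed points (cohomological dimension $1$), the homotopy fixed point spectral sequence collapses and yields, for each $j$, a short exact sequence
\[
0\to\operatorname{coker}\!\bigl(\psi^{g}-1\mid N_{j+1}\bigr)\to\pi_{j}L_{K(1)}X\to\ker\!\bigl(\psi^{g}-1\mid N_{j}\bigr)\to 0,
\]
where $g$ topologically generates $1+p\Zp$ and $N_{j}:=\bigl(\pi_{j}(\KUp\wedge X)\bigr)^{\F_p^{\times}}$; hence $|\pi_{j}L_{K(1)}X|=|\ker(\psi^{g}-1\mid N_{j})|\cdot|\operatorname{coker}(\psi^{g}-1\mid N_{j+1})|$, with the usual convention when a factor is infinite. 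Bott periodicity together with the formula for $\psi^{g}$ on the Bott class makes $N_{j}$ depend on $j$ only through $j\bmod 2(p-1)$, up to twisting $\psi^{g}$ by a power of $\gamma:=g^{p-1}$: on the $j$-th piece, $\psi^{g}-1$ becomes $\psi_{i}-\gamma^{-r}$ with $i\equiv j$ and $r=\lfloor j/(2(p-1))\rfloor$, the $\psi_{i}$ being automorphisms of finitely many fixed finitely generated $\Zp$-modules $N_{0},\dots,N_{2(p-1)-1}$.

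Next I would invoke the $\mu$-vanishing theorem --- for finite $X$ this is simply that $\KUp^{*}(X)$ is finitely generated over $\Zp$: each $N_{i}$ is free of rank $a_{i}$ modulo a bounded finite torsion subgroup, these ranks recover the total $\lambda$-invariant $\lambda(X)$ once assembled with the signs coming from the grading, and the characteristic polynomial $f_{i}(T)$ of $\psi_{i}-1$ on $N_{i}/\tors$ is distinguished of degree $a_{i}$. For all but finitely many $r$, $\gamma^{-r}$ is not an eigenvalue of any $\psi_{i}$, so $\ker(\psi_{i}-\gamma^{-r}\mid N_{i})$ is bounded and $|\operatorname{coker}(\psi_{i}-\gamma^{-r}\mid N_{i})|=p^{\,v_{p}(f_{i}(\gamma^{-r}-1))}$ up to a uniformly bounded multiplicative factor coming from the torsion; the finitely many ``resonant'' degrees --- including those where $\pi_{j}L_{K(1)}X$ is infinite --- are precisely the ones excluded once $m$ is large, which is the entire purpose of the outer limit $\lim_{m\to\infty}$. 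Thus, for $m\gg0$ and all $j$ in the summation range, $|\pi_{j}L_{K(1)}X|=c_{j}\,p^{\,v_{p}(f_{i(j)}(\gamma^{-r(j)}-1))}$ with $\log_{p}c_{j}$ uniformly bounded.

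The core is then the asymptotic evaluation of $\sum_{j=1+m}^{n+m}(-1)^{j}\,p^{\,v_{p}(f_{i(j)}(\gamma^{-r(j)}-1))}$. One groups the terms by the residue $i=i(j)$ --- so $(-1)^{j}=(-1)^{i}$ is constant in each block and $r$ runs over an interval of length $\sim n/2(p-1)$ --- and factors $f_{i}(\gamma^{-r}-1)=\prod_{\ell}\bigl(\gamma^{-r}-1-\pi_{i,\ell}\bigr)$ over the roots. Because the Adams operations on a finite spectrum are defined integrally, the eigenvalues of $\psi_{i}$ are integral powers $g^{n_{i,\ell}}$, so every root $\pi_{i,\ell}=g^{n_{i,\ell}}-1$ lies in $p\Zp$ and $v_{p}(\gamma^{-r}-1-\pi_{i,\ell})=v_{p}(r-\rho_{i,\ell})+1$ for fixed $\rho_{i,\ell}\in\Zploc$. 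The remaining input is the elementary $p$-adic counting estimate
\[
\sum_{r=1}^{R}p^{\,v_{p}(r-\rho)}=\Bigl(1-\tfrac1p\Bigr)R\log_{p}R+O(R),
\]
uniform over the finitely many relevant $\rho\in\Zploc$. Summing over the $a_{i}$ roots and the $2(p-1)$ residues, the constants $1-\tfrac1p$, $p-1$, the period $2(p-1)$ and the $\tfrac1n$ normalization are designed to collapse to the factor $-\lambda(X)/2$, with the alternating signs making the even- and odd-degree blocks reinforce with the sign dictated by the parity of the Bott twist --- which is exactly what turns the unsigned count of the $a_{i}$ into the signed total $\lambda(X)$ (equivalently, for finite $X$, the Euler characteristic). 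Finally, dividing by $-\lambda(X)\log_{p}(n)/2$ kills the $O(R)$ error and the bounded factors $c_{j}$, since each changes $\tfrac1n\sum$ by only $O(1)$, and the outer $m\to\infty$ is harmless because each inner limit is already independent of $m$.

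I expect the main obstacle to be the bookkeeping around the counting estimate, not the estimate in isolation. The delicate point is that a repeated (or $p$-adically near-coincident) root of some $f_{i}$ would, taken at face value, make $p^{\,v_{p}(f_{i}(\gamma^{-r}-1))}$ grow faster than logarithmically on average; one must therefore use the structure theory of these Iwasawa modules to see that the leading-order contribution is nonetheless controlled linearly by $\lambda(X)$, and one must verify that all the numerical constants assemble to exactly $\tfrac12$ rather than some other rational multiple. By contrast the torsion corrections $c_{j}$, the non-commutation of $\tfrac1n\sum$ with bounded multiplicative perturbations, and the handling of the nested limits are all routine.
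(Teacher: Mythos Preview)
Your approach---unwinding $\pi_*L_{K(1)}X$ through the $\Zpx$-homotopy-fixed-point description and reducing to a $p$-adic valuation count---is genuinely different from the paper's, which instead proves an additivity lemma for the growth rate along cofiber sequences and then uses $K_0$ of finite spectra $\cong\Z$ to reduce everything to an explicit computation for $X=S^0$.

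The obstacle you flag at the end is not bookkeeping; it is fatal, and no Iwasawa-theoretic structure theory can rescue it, because the statement as written is actually false for such $X$. Take $X=S^0\vee S^0$: then $\left|\pi_{2(p-1)r-1}L_{K(1)}X\right|=p^{2(\nu_p(r)+1)}$, and $\sum_{r\le R}p^{2\nu_p(r)}\asymp R^2$, so $\frac{1}{n}\sum_j(-1)^j\left|\pi_jL_{K(1)}X\right|$ grows \emph{linearly} in $n$ and the limit in the theorem is $+\infty$. The problem is not confined to repeated roots: for $X=S^0\vee S^{2(p-1)}$ the two roots of $f^{\epsilon_0(\KUp)^0(X)}$ are distinct, yet $\left|\pi_{2(p-1)r-1}L_{K(1)}X\right|=p^{\nu_p(r)+\nu_p(r-1)+2}$, and because orders of direct summands \emph{multiply} rather than add, the graded average is asymptotic to $-p\log_p n$ instead of the predicted $-\log_p n$, so the limit equals $p\neq 1$. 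Your single-root estimate $\sum_r p^{\nu_p(r-\rho)}\sim(1-\tfrac{1}{p})R\log_p R$ is correct, but there is no mechanism converting $\sum_r\prod_\ell p^{\nu_p(r-\rho_\ell)+1}$ into $\sum_\ell\sum_r p^{\nu_p(r-\rho_\ell)+1}$ when several roots occupy the same $\epsilon_j$. The paper's proof makes the same error in a different guise: its additivity argument passes from the short exact sequence $0\to\im f_j\to\pi_j Y\to\im g_j\to 0$ to the identity $\left|\pi_j Y\right|=\left|\im f_j\right|+\left|\im g_j\right|$, which should of course be a product. As written, the theorem is valid only for those finite $X$ with $\lambda\bigl(\epsilon_j(\KUp)^i(X)\bigr)\le 1$ for every $i,j$.
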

Here $\lambda(X)$ is the {\em total $\lambda$-invariant of $X$}, defined in Definition \ref{def of tot lambda} as the alternating sum of the $\lambda$-invariants of the Iwasawa modules of $X$:
\begin{align*} \lambda(X) &= 
 \sum_{j=0}^{p-2}\left( \lambda(\epsilon_j(\KUp)^0(X)) - \lambda(\epsilon_j(\KUp)^{-1}(X))\right) .\end{align*}

We remark that our proof of Theorem \ref{intro thm on lambda and euler char} involves a bit of algebraic $K$-theory. In Proposition \ref{additivity of sn}, we prove that, in a general triangulated category $\mathcal{C}$ satisfying appropriate hypotheses, the asymptotic growth rate of the graded average of the orders of $n$ consecutive homotopy groups is an {\em additive} invariant. Hence this asymptotic growth rate depends only on an object's equivalence class in the Grothendieck group $K_0(\mathcal{C})$. Since $K_0$ of the category of finite spectra is isomorphic to $\mathbb{Z}$ and generated by $[S^0]$, we are able to prove Theorem \ref{intro thm on lambda and euler char} by reducing to the case where $X$ is a sphere. Another consequence of the algebraic-$K$-theoretic approach is that it is easy to see that the total $\lambda$-invariant of a finite spectrum is also equal to its Euler characteristic (see Lemma \ref{additivity of lambda}).

As far as we know, the idea of treating asymptotic growth of orders of homotopy groups as an additive invariant, and using algebraic $K$-theoretic methods to study it, is new. The method is general and flexible enough to be applied more broadly, e.g. at higher heights using the noncommutative Iwasawa theory of Venjakob \cite{MR1924402}. The authors are pursuing this approach at height $2$ at primes $p>3$, but this goes beyond the scope of this paper. More generally, throughout this paper we have attempted to use methods and ideas which appear to be generalizable to higher heights, at sufficiently large primes, using Venjakob's theory.

Aside from the total $\lambda$-invariant of a finite spectrum $X$, one can also consider the $\lambda$-invariants of each of the individual Iwasawa modules $\epsilon_j(\KUp)^0(X)$ and $\epsilon_j(\KUp)^{-1}(X)$. These are not as exciting as the total $\lambda$-invariant: in the proof of Lemma \ref{additivity of lambda}, we show that the individual $\lambda$-invariants are described topologically by the equalities
\begin{align*}
 \lambda(\epsilon_j(\KUp)^0(X)) 
  &= \sum_{n\equiv j \mod p-1} \dim_{\mathbb{Q}}H_{2n}(X), \mbox{\ \ \ and} \\
 \lambda(\epsilon_j(\KUp)^{-1}(X)) 
  &= \sum_{n\equiv j \mod p-1} \dim_{\mathbb{Q}}H_{2n-1}(X) .
\end{align*}

\subsection{The $\mu$-invariant} 

Iwasawa conjectured that, in the case of a tower \eqref{tower 1} obtained from a number field $F$ by iteratively adjoining a primitive $p$th root of unity, $p^2$th root of unity, $p^3$th root of unity, etc., the $\mu$-invariant of the resulting Iwasawa module is zero. The celebrated theorem of Ferrero--Washington \cite{MR0528968} proved this conjecture in the case where $F$ is an abelian extension of $\Q$. Our point is that, in number theory, one often expects the $\mu$-invariant to vanish.

In Theorem \ref{intro mu-invs vanish}, we prove that this expectation is borne out for finite spectra: 
\begin{letteredtheorem}[Theorem \ref{mu-invs vanish}]\label{intro mu-invs vanish}
Let $X$ be a finite spectrum. Then, for every $i,j$, the $\mu$-invariant of each of the Iwasawa modules $\epsilon_j(\KUp)^i(X)$ is zero. 
\end{letteredtheorem}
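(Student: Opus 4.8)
The plan is to reduce the vanishing of $\mu(\epsilon_j(\KUp)^i(X))$ to a structural fact about the $p$-adic $K$-theory of finite spectra: namely, that $(\KUp)^i(X)$ is a finitely generated module over $\Zp$ (not merely over $\Zp[\Zpx]$). Indeed, for a finite spectrum $X$, the Atiyah--Hirzebruch spectral sequence computing $\KUp^*(X)$ has a finite $E_2$-page with finitely generated $\Zp$-modules in each bidegree, so each $(\KUp)^i(X)$ is a finitely generated $\Zp$-module. Passing to the idempotent summand $\epsilon_j(\KUp)^i(X)$ preserves this, since $\epsilon_j$ is a direct summand. Thus each Iwasawa module $M = \epsilon_j(\KUp)^i(X)$ in question is finitely generated over $\Zp \subseteq \Zp[\Zpx]$.

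The key step is then the observation that a $\Zp[\Zpx]$-module which is finitely generated over $\Zp$ is automatically a finitely generated torsion $\Lambda$-module with $\mu$-invariant zero, where $\Lambda$ denotes the Iwasawa algebra (here $\Zp[\Zpx]$, or rather the relevant $\Zp[[T]]$ after splitting off the finite part $\F_p^\times$ and choosing a topological generator of the $\Zp$-factor). Concretely, if $M$ is finitely generated over $\Zp$, then it is in particular $\Lambda$-torsion (it cannot contain a free $\Lambda$-summand, since $\Lambda$ is not finitely generated over $\Zp$), and its Iwasawa structure theorem decomposition $M \sim \bigoplus_i \Lambda/(p^{a_i}) \oplus \bigoplus_k \Lambda/(f_k^{b_k})$ can have no summands of the form $\Lambda/(p^{a})$ with $a \geq 1$, because such a summand is infinitely generated over $\Zp$. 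Since $\mu(M) = \sum_i a_i$, we conclude $\mu(M) = 0$. I would state this as a short algebraic lemma, citing the standard reference (e.g. Washington's \emph{Introduction to Cyclotomic Fields}) for the structure theorem and the characterization of $\mu$.

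Assembling these pieces: for each finite spectrum $X$ and each $i \in \{0,-1\}$ (all other integer degrees being $2$-periodic) and each $j \in \{0,\dots,p-2\}$, the module $\epsilon_j(\KUp)^i(X)$ is finitely generated over $\Zp$ by the Atiyah--Hirzebruch argument, hence has $\mu$-invariant zero by the algebraic lemma. The main obstacle, such as it is, is purely bookkeeping: making sure the identification of $\epsilon_j(\KUp)^i(X)$ as a $\Lambda$-module is set up compatibly with whatever normalization of the Iwasawa algebra the paper has fixed in the background sections, and confirming that the splitting $\Zpx \cong \Zp \times \F_p^\times$ is handled so that ``$\mu$-invariant of the Iwasawa module'' unambiguously refers to the $\mu$-invariant over $\Zp[[T]]$. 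Once that is pinned down, there is no real analytic or homotopy-theoretic difficulty: the theorem is, in essence, the statement that finite spectra have \emph{finite} $p$-adic $K$-theory groups, translated into Iwasawa-theoretic language.
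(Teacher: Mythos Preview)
Your proof is correct and follows essentially the same approach as the paper: both argue that a nonzero $\mu$-invariant would force $\epsilon_j(\KUp)^i(X)$ to be ``too large'' (the paper phrases this as having uncountably many linearly independent $p$-torsion elements, you as failing to be finitely generated over $\Zp$), which is impossible for a finite spectrum. One minor slip in your closing sentence: the $p$-adic $K$-groups of a finite spectrum are finitely generated over $\Zp$, not literally finite (e.g.\ $(\KUp)^0(S^0)\cong\Zp$), though of course this does not affect your argument.
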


\subsection{The characteristic polynomial}

Hahn--Mitchell define a {\em pseudo-equivalence} to be a map of spectra $X \rightarrow Y$ whose induced map in $p$-adic $K$-theory $(\KUp)^i(Y) \rightarrow (\KUp)^i(X)$ has finite kernel and finite cokernel for each $i$. In Theorem \ref{pseudoequiv thm} we prove that every finite spectrum $X$ is pseudo-equivalent to a canonical-up-to-homotopy wedge product $\overset{\circ}{X} = (X_{even})_{torsfree}\vee (X_{odd})_{torsfree}$ such that
\begin{itemize}
\item $(X_{even})_{torsfree}$ is a finite spectrum such that $H_*((X_{even})_{torsfree};\mathbb{Z}_{(p)})$ is torsion-free and concentrated in even degrees, and
\item $(X_{odd})_{torsfree}$ is a finite spectrum such that $H_*((X_{odd})_{torsfree};\mathbb{Z}_{(p)})$ is torsion-free and concentrated in odd degrees.
\end{itemize}
Theorem \ref{pseudoequiv thm}
refines a theorem of Hahn and Mitchell which we recall below as Theorem \ref{hahn-mitchell thm 2}. The spectrum $\overset{\circ}{X}$ appears in the statement of Theorem \ref{intro imc 2}, below, which establishes that the characteristic polynomials of the Iwasawa modules of a finite spectrum carry the information of the orders of the $K(1)$-local homotopy groups of $\overset{\circ}{X}$.

Before stating Theorem \ref{intro imc 2}, we state its number-theoretic precedent. One consequence of the classical Iwasawa Main Conjecture, proven by Mazur and Wiles \cite{MR0742853}, is that {\em the $p$-adic valuations of special values of the characteristic polynomial of the Iwasawa module of \eqref{tower 1} agree with the $p$-adic valuations of special values of a classical Dirichlet $L$-function.} We call this fact---stated more precisely in \cref{The Classical Iwasawa Main Conjecture}---the {\em weak form of the Iwasawa Main Conjecture.}

Theorem \ref{intro imc 2} is a spectral analogue of the weak form of the Iwasawa Main Conjecture. 
Rather than the characteristic polynomial of the Iwasawa module of a tower of number fields \eqref{tower 1}, Theorem \ref{intro imc 2} concerns values of the characteristic polynomial $f^{\epsilon_j(\KUp)^iX}(T)$ of the Iwasawa module $\epsilon_j(\KUp)^i(X)$ of $X$, for $j = 0, \dots ,p-2$ and for $i\in \{ -1,0\}$. The role of the special values of a Dirichlet $L$-function is played by the orders of the $K(1)$-local homotopy groups of $\overset{\circ}{X}$. It is reasonable to think that the orders of these homotopy groups are indeed analogous to special values of a Dirichlet $L$-function, since in \cite{salch2023kulocal}, the orders of those homotopy groups are shown to have, up to sign, the same $p$-adic valuations as the special values of a certain complex-analytic ``$KU$-local zeta-function of $X$'' which is a product of Tate twists of Dirichlet $L$-functions.

The notation $x\sim_p y$ in the statement of Theorem \ref{intro imc 2} denotes that $x$ and $y$ have the same $p$-adic valuation.
\begin{letteredtheorem}[Theorem \ref{imc 2}]\label{intro imc 2}
    Let $X$ be a finite spectrum.  Let $\alpha$ and $\beta$ be the least and greatest integers $i$, respectively, such that $H_{i}(X;\mathbb{Q})$ is nontrivial. Then we have
\begin{align*}
 \left|\pi_{2m-1}L_{K(1)}D\overset{\circ}{X}\right| 
  &\sim_p f^{\epsilon_{-m}(\KUp)^0(X)}((1+p)^{-m}-1) \mbox{\ \ and}\\
 \left|\pi_{2m}L_{K(1)}D\overset{\circ}{X}\right| 
  &\sim_p f^{\epsilon_{-m}(\KUp)^{-1}(X)}((1+p)^{-m}-1) 
\end{align*}
for all integers $m$ satisfying $m < \frac{-\beta}{2}$, and also for all integers $m$ satisfying $m > \frac{-\alpha}{2}$.
\end{letteredtheorem}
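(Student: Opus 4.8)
The plan is to convert both sides of the claimed congruence into explicitly computed quantities and then match them. The torsion-free replacement $\overset{\circ}{X}$ makes the left-hand side accessible to a descent computation, while the pseudo-equivalence between $X$ and $\overset{\circ}{X}$ of Theorem \ref{pseudoequiv thm} — which induces a pseudo-isomorphism on each Iwasawa module $\epsilon_j(\KUp)^i(-)$, hence preserves characteristic polynomials — keeps the right-hand side intact.

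First I would record the structure of the $K$-theory of $\overset{\circ}{X} = A \vee B$, where $A = (X_{even})_{torsfree}$ and $B = (X_{odd})_{torsfree}$: here $(\KUp)^0(\overset{\circ}{X}) = (\KUp)^0(A)$ is free over $\Zp$ with $(\KUp)^1(A) = 0$, and $(\KUp)^{-1}(\overset{\circ}{X}) = (\KUp)^{-1}(B)$ is free with $(\KUp)^0(B) = 0$. For any finite-rank free $\Zp$-module $M$ carrying the operator $T = \psi^{1+p} - 1$ — which is $\equiv 0 \bmod p$, since $\psi^{1+p}$ acts on the degree-$2n$ graded piece of the Atiyah--Hirzebruch filtration by $(1+p)^n$ — the module $M$ is $\Zp[[T]]$-torsion with $\mu = 0$ and characteristic polynomial $f^M(T) = \det\big(T\cdot\mathrm{id} - (\psi^{1+p}-1)\big)$; consequently, from $|M/\phi M| = p^{v_p(\det\phi)}$ for an injective endomorphism $\phi$, for each $c \in p\Zp$ that is not an eigenvalue of $\psi^{1+p}-1$ one gets $\big|M/(\psi^{1+p}-(1+c))M\big| = p^{\,v_p(f^M(c))}$.

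Next I would compute $\pi_* L_{K(1)}D\overset{\circ}{X}$ by descent. Since $p$ is odd, $\Zpx \cong \Z/(p-1) \times \Zp$ is procyclic of $p$-cohomological dimension $1$, and $L_{K(1)}DW \simeq F(W, L_{K(1)}S^0) \simeq F(W, \KUp)^{h\Zpx}$, so the homotopy-fixed-point spectral sequence collapses to short exact sequences
\begin{equation*}
0 \longrightarrow \big((\KUp)^{-(k+1)}(W)\big)_{\Zpx} \longrightarrow \pi_k L_{K(1)}DW \longrightarrow \big((\KUp)^{-k}(W)\big)^{\Zpx} \longrightarrow 0 .
\end{equation*}
With $W = \overset{\circ}{X} = A \vee B$, the even/odd concentration of $A$ and $B$ shows that for $k = 2m-1$ the coinvariant term is supported on $(\KUp)^{-2m}(A)$ and the invariant term on $(\KUp)^{1-2m}(B)$, whereas for $k = 2m$ the coinvariant term is supported on $(\KUp)^{-(2m+1)}(B)$ and the invariant term on $(\KUp)^{-2m}(A)$. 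Because $p-1$ is invertible in $\Zp$, passing to $\Zpx$-invariants and $\Zpx$-coinvariants only retains the $\F_p^\times$-fixed isotypic summand; Bott periodicity identifies the surviving summand of $(\KUp)^{-2m}(A)$ with $\epsilon_{-m}(\KUp)^0(A) \otimes \beta^m$ — on which $\psi^{1+p}$ acts as $(1+p)^m$ times its action on $\epsilon_{-m}(\KUp)^0(A)$ — and likewise the surviving summand of $(\KUp)^{-(2m+1)}(B)$ with $\epsilon_{-m}(\KUp)^{-1}(B) \otimes \beta^m$. Applying the identity of the previous paragraph with $c = (1+p)^{-m}-1$ identifies the coinvariant term with $p^{\,v_p(f^{\epsilon_{-m}(\KUp)^0(X)}((1+p)^{-m}-1))}$ when $k = 2m-1$, and with $p^{\,v_p(f^{\epsilon_{-m}(\KUp)^{-1}(X)}((1+p)^{-m}-1))}$ when $k = 2m$.

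The remaining step, which I expect to be the main obstacle, is to show that under the stated hypotheses on $m$ the invariant (``error'') term in the short exact sequence vanishes and, at the same time, $(1+p)^{-m}$ is not an eigenvalue of $\psi^{1+p}$ on the pertinent free module, so that the identity above applies and $f^{\epsilon_{-m}(\KUp)^i(X)}((1+p)^{-m}-1)$ is a nonzero element of $\Zp$. Both conditions are governed by the multiset of Adams weights occurring in $(\KUp)^0(A)$ and $(\KUp)^{-1}(B)$; since $\overset{\circ}{X}$ has torsion-free homology, these weights are read off directly from the degrees in which $X$ has nonzero rational homology, hence lie in an interval determined by $\alpha$ and $\beta$. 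The invariant term is nonzero precisely when the Adams weight forced by the homotopical degree actually occurs there, so one must verify that $m < -\beta/2$ and $m > -\alpha/2$ push the relevant weight ($\pm m$, up to the Bott shift) outside this interval; carrying out this Bott-periodicity-and-duality bookkeeping, with the short parity analysis at the two ends of the interval done separately for the even and odd homotopy degrees, is the delicate heart of the proof.
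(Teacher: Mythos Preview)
Your argument is correct, but it follows a genuinely different route from the paper's. The paper proves the theorem as an immediate corollary of Theorem~\ref{pseudoequiv thm} and Lemma~\ref{SIMC}, and the latter is established using the \emph{Atiyah--Hirzebruch} spectral sequence
\[
E_2^{s,t} \cong H^s\!\left(X;\,\pi_{-t}L_{K(1)}S^0\right) \Longrightarrow \pi_{-s-t}\!\left(DX \wedge L_{K(1)}S^0\right),
\]
together with the explicit sphere computation of Proposition~\ref{SIMC for Spheres}: one matches the product of orders along a fixed-total-degree line of $E_2$ with the product of linear factors of the characteristic polynomial, and then argues that in the stated range of $m$ no differential can touch that line because every nonzero differential must involve the $t=0$ row, which lives over $s\in[2a,2b]$. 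By contrast, you use the \emph{homotopy fixed point} (descent) spectral sequence for the $\Zpx$-action on $\KUp$, identifying the characteristic polynomial directly as $\det\!\big(T\cdot\mathrm{id} - (\psi^{1+p}-1)\big)$ on a free $\Zp$-module and the relevant homotopy group as a coinvariant quotient; the range condition on $m$ then becomes the statement that the invariant (``$H^0$'') term vanishes and that $(1+p)^{-m}$ avoids the finite set of Adams eigenvalues. Your approach bypasses both the explicit calculation of $\pi_*L_{K(1)}S^0$ and the reduction-to-spheres step (Theorem~\ref{iwasawa invs and rational type} is not needed), trading them for a clean determinant identity; the paper's approach is more elementary in that it only uses the AHSS and the known image-of-$J$ pattern. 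The two are essentially dual filtrations of the same object, and the ``delicate bookkeeping'' you flag at the end corresponds exactly to the AHSS collapse argument in the paper---you have correctly located where the inequalities $m<-\beta/2$ and $m>-\alpha/2$ enter, though you would still need to carry it out to see that the bounds match on the nose.
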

The subscript $-m$ of $\epsilon_{-m}$, in the statement of Theorem \ref{intro imc 2}, is to be understood as being defined modulo $p-1$.

The proof of Theorem \ref{intro imc 2} resembles the proof of \cite[Theorem 2.8]{salch2023kulocal}, and more generally, the material on characteristic polynomials in this paper can be seen as $p$-adic versions of results on complex-analytic ``$KU$-local zeta-functions'' of spectra given in \cite{salch2023kulocal}. These $p$-adic versions are, in some ways, stronger and more general: for example, Theorem \ref{intro imc 2} applies to all finite spectra, while \cite[Theorem 2.8]{salch2023kulocal} required the relevant spectrum to have trivial homology in all odd degrees.

\subsection{Conventions}\label{conventions section}
\begin{itemize}
\item $p$ will always be an {\em odd} prime.
\item Given rational numbers $a,b$, we write $a\sim_p b$ to denote that $a$ and $b$ have the same $p$-adic valuation.
\end{itemize}

\subsection{Acknowledgments}
We thank Francesc Castella for graciously visiting Wayne State University and answering many of our questions about Iwasawa theory.

\section{Background ideas from Iwasawa theory}
\label{background section}
\subsection{Review of classical Iwasawa theory}
\label{The Classical Iwasawa Main Conjecture}

In this section, we will take a brisk walk through classical Iwasawa theory, arriving at a statement of the Iwasawa Main Conjecture. Our treatment here is based primarily on \cite{MR3840072}, \cite{MR1421575}, and \cite{sharifi}, but it is by no means an exhaustive exposition. Instead, we will simply highlight the objects and tools that will be used to applied to the Iwasawa modules of finite spectra in the subsequent sections.

Consider a tower of Galois extensions of the form \eqref{tower 1}. 
Write $\Gamma_n$ for the Galois group $\gal{F_n/F}$, and $\Gamma$ for the limit $\lim_n \Gamma_n$ of the Galois groups. 
Iwasawa studied such towers in the case that $\Gamma_n \cong \Z/p^n\Z$ and $\Gamma \cong \Zp$. Under such circumstances, the tower \eqref{tower 1} is called a \emph{$\Zp$-extension}. Iwasawa was interested in determining the asymptotic growth rate of the class number $h_{F_n} = \left|\operatorname{Cl}_{F_n}\right|$ as $n$ increases. Iwasawa focused in particular on the growth rate of the order of the $p$-Sylow subgroup $A_n$ of $\operatorname{Cl}_{F_n}$. We will write $h_{F_n}^{(p)}$ for the order of this $p$-Sylow subgroup. Iwasawa proved the following result:
\begin{theorem}[Iwasawa] \label{Iwasawa's theorem}
    There exist nonnegative integers $\lambda$ and $\mu$ and an integer $\nu$ such that
    \begin{align*}
        h_{F_n}^{(p)} = p^{n\lambda + p^n \mu + \nu}
    \end{align*}
    for all sufficiently large $n$.
\end{theorem}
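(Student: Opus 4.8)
The plan is to bundle the finite-level groups $A_n$ into a single module over the Iwasawa algebra, apply the structure theory of such modules, and read off the asymptotics of $|A_n|$ from the elementary divisors. Fix a topological generator $\gamma$ of $\Gamma\cong\Zp$ and identify the Iwasawa algebra $\Lambda:=\Zp[[\Gamma]]$ with the power series ring $\Zp[[T]]$ via $\gamma\mapsto 1+T$. The norm maps $\operatorname{Cl}_{F_{n+1}}\to\operatorname{Cl}_{F_n}$ restrict to the $p$-Sylow subgroups $A_{n+1}\to A_n$, so the inverse limit $X:=\lim_n A_n$ acquires a natural continuous $\Lambda$-module structure. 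The first step, and the one I expect to be the main obstacle, is the purely number-theoretic input that \emph{$X$ is a finitely generated torsion $\Lambda$-module}: finite generation follows from a Nakayama-lemma argument once a suitable quotient of $X$ is seen to be finite, while torsion-ness is where the arithmetic of the tower \eqref{tower 1} genuinely enters, through control of the ramification in $F_\infty/F$ and the resulting subexponential bound on $|A_n|$.

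Granting this, the structure theorem for finitely generated torsion $\Lambda$-modules furnishes a pseudo-isomorphism---a $\Lambda$-homomorphism with finite kernel and finite cokernel---
\[
X\longrightarrow E:=\Big(\bigoplus_{i}\Lambda/(p^{a_i})\Big)\oplus\Big(\bigoplus_{j}\Lambda/(f_j(T)^{b_j})\Big),
\]
with each $f_j$ distinguished and irreducible; one then \emph{defines} $\mu=\sum_i a_i\ge 0$ and $\lambda=\sum_j b_j\deg f_j\ge 0$. The next step is to recover $A_n$ from $X$. Writing $\omega_n=(1+T)^{p^n}-1$, a standard argument relates the $\Gamma$-coinvariants $X/\omega_n X$ to $A_n$, with a discrepancy that is bounded---in fact eventually constant---in $n$ (the relation is an equality for the cyclotomic $\Zp$-extension with a single totally ramified prime, and requires a little genus-theoretic bookkeeping in general). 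Since each $A_n$ is finite, $\operatorname{char}(X)=p^\mu\prod_j f_j(T)^{b_j}$ must be prime to every $\omega_n$; in particular no $f_j$ equals $T$, nor divides any of the distinguished polynomials whose roots are $\zeta-1$ for $\zeta$ a $p$-power root of unity. This is precisely the condition guaranteeing that $\Lambda/(f_j(T)^{b_j},\omega_n)$ is finite for every $n$.

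Finally I would compute $|E/\omega_n E|$ summand by summand. For a $p$-power summand, $\Lambda/(p^{a_i})\cong(\Z/p^{a_i})[[T]]$ and $\omega_n\equiv T^{p^n}\bmod p$, so $\log_p\big|\Lambda/(p^{a_i},\omega_n)\big|=a_i p^n$. For a distinguished summand $\Lambda/(f_j(T)^{b_j})$, the order is $p^{v_p(R)}$ where $R=\prod_\alpha\big((1+\alpha)^{p^n}-1\big)$ is the resultant of $f_j(T)^{b_j}$ with $\omega_n$, the product running over the roots $\alpha$ of $f_j$ counted with multiplicity $b_j$; since every such $\alpha$ has positive valuation and $v_p\big((1+\alpha)^{p^n}-1\big)=v_p(\alpha)+n$ for $n$ large, this gives $\log_p\big|\Lambda/(f_j(T)^{b_j},\omega_n)\big|=b_j(\deg f_j)\,n+b_j\,v_p(f_j(0))$ for $n$ large. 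Summing over all summands, $\log_p|E/\omega_n E|=\mu p^n+\lambda n+\nu'$ for a fixed integer $\nu'$ and all large $n$. Because the pseudo-isomorphism $X\to E$ has fixed finite kernel and cokernel---on which $\omega_n$ acts trivially once $n$ is large---a snake-lemma comparison shows that $|X/\omega_n X|$ and $|E/\omega_n E|$ differ by an eventually-constant factor; combining this with the eventually-constant discrepancy between $|A_n|$ and $|X/\omega_n X|$ yields $\log_p h_{F_n}^{(p)}=\log_p|A_n|=\mu p^n+\lambda n+\nu$ for some integer $\nu$ and all sufficiently large $n$, with $\lambda,\mu\ge 0$, as required. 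The hard part remains establishing that $X$ is finitely generated and torsion over $\Lambda$; once that is in hand, the argument is the structure theorem plus the Weierstrass-preparation/resultant computation just sketched.
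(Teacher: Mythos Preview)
The paper does not prove this theorem at all: it is stated in the background section as a classical result of Iwasawa, with the exposition attributed to standard references. There is no ``paper's own proof'' to compare against.

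Your sketch is the standard proof, and its architecture is correct: form $X=\lim_n A_n$ as a $\Lambda$-module, invoke finite generation and torsion, apply the structure theorem, relate $A_n$ to $X/\omega_n X$ up to a bounded error, and compute $|E/\omega_n E|$ on elementary summands. A couple of small points are worth tightening. First, your explanation of why $X$ is torsion (``subexponential bound on $|A_n|$'') is not how the argument usually runs; rather, one shows directly that a suitable $\omega_n$-coinvariant quotient of $X$ is finite, and this forces the $\Lambda$-rank to be zero. Second, the exact formula $v_p\big((1+\alpha)^{p^n}-1\big)=v_p(\alpha)+n$ is only eventually true, and even then the constant attached to each root need not equal $v_p(\alpha)$: when $0<v_p(\alpha)\le 1/(p-1)$ there is a transitional regime in which the valuation grows geometrically before settling into slope~$1$. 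So your displayed constant $b_j\,v_p(f_j(0))$ is not literally correct. This does not damage the conclusion, since any such constant is absorbed into $\nu$, but the sentence should read ``$=b_j(\deg f_j)\,n+c_j$ for some constant $c_j$ and all large $n$.'' With those adjustments, the sketch is sound.
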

Iwasawa conjectured that $\mu=0$ in the case of the \emph{cyclotomic $\Zp$-extension}, i.e., the unique $\Zp$-extension $F_{\infty}/F$ inside $\bigcup_n F(\zeta_{p^n})$. This conjecture was proven for $F$ an abelian extension of $\Q$ by Ferrero and Washington \cite{MR0528968}.

For each $\Zp$-extension, let $X_{\infty}$ be the limit $\lim_{n} A_n$ of the sequence of norm maps 
\[ \dots \stackrel{N}{\longrightarrow} A_{2} \stackrel{N}{\longrightarrow} A_1\stackrel{N}{\longrightarrow} A_0 .\]
 Since each $A_n$ is a $\Zp[\Gamma_n]$-module, $X_{\infty}$ is a module over the \emph{Iwasawa algebra} $\Lambda = \Zp[[\Gamma]]$. Modules over the ring $\Lambda$ are called \emph{Iwasawa modules}, and there are at least two reasons why Iwasawa modules are central in classical Iwasawa theory. First, as observed by Serre, $\Lambda$ is isomorphic to a power series ring with a single generator
\begin{align*}
    \Lambda = \Zp[[\Gamma]] \cong \Zp[[T]]
\end{align*}
where a choice of topological generator $\gamma \in \Gamma$ maps to $1+T \in \Zp[[T]]$. We will fluidly use both descriptions of $\Lambda$ throughout this paper. Second, finitely generated Iwasawa modules have a nice structure theorem, similar to finitely generated modules over a principal ideal domain, but the structure theory describes the module up to pseudo-isomorphism rather than up to isomorphism. A map of Iwasawa modules is called a {\em pseudo-isomorphism} if its kernel and cokernel are both finite. Given Iwasawa modules $M,N$, we write $M\simeq N$ if there exists a pseudo-isomorphism $M \to N$.\footnote{It is worth noting that $M \simeq N$ does not imply $N \simeq M$ unless $M,N$ are both finitely-generated $\Lambda$-torsion Iwasawa modules.} 
\begin{theorem}[Serre, Iwasawa]
    For any finitely generated Iwasawa module $M$, we have
    \begin{align}
\label{iwasawa decomp}        M &\simeq \Lambda^r \oplus \bigoplus_{i=1}^s \Lambda/p^{m_i} \oplus \bigoplus_{j=1}^t \Lambda/f_j(T)^{n_j},
    \end{align}
    where each $f_j(T) \in \Zp[T]$ is an irreducible, distinguished polynomial, that is, an irreducible, monic polynomial such that $f(T) \equiv T^{\operatorname{deg}(f)} \mod{p}$.
\end{theorem}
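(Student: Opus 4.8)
The plan is to follow the classical route of Serre and Iwasawa: reduce everything to the behavior of $M$ after localizing at the primes of $\Lambda$ of codimension at most one, where $\Lambda$ becomes a field or a discrete valuation ring. The features of $\Lambda=\Zp[[T]]$ I would record first are: it is a Noetherian, complete, regular local ring of Krull dimension $2$, with maximal ideal $\mathfrak m=(p,T)$; being a regular local ring it is a unique factorization domain, so its height-one primes are principal; and the Weierstrass preparation theorem identifies the height-one primes precisely as $(p)$ together with the ideals $(f)$ for $f$ an irreducible distinguished polynomial, while showing that each localization $\Lambda_{\mathfrak p}$ at a height-one prime $\mathfrak p$ is a discrete valuation ring. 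I also record two elementary facts used repeatedly: a finitely generated $\Lambda$-module with $\operatorname{Supp}$ contained in $\{\mathfrak m\}$ is finite (it is killed by a power of $\mathfrak m$, hence is a module over the finite ring $\Lambda/\mathfrak m^N$), and direct sums and composites of pseudo-isomorphisms are pseudo-isomorphisms.

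First I would replace $M$ by $M/M_0$, where $M_0$ is the maximal finite submodule of $M$ (it exists because $\Lambda$ is Noetherian); the quotient map is a pseudo-isomorphism, so I may assume $M$ has no nonzero finite submodule. Let $Z\subseteq M$ be the torsion submodule and $N=M/Z$, a torsion-free module. Since over a field or a DVR a finitely generated torsion-free module is free, hence reflexive, the natural map $N\to N^{\vee\vee}$ (with $N^{\vee}=\operatorname{Hom}_\Lambda(N,\Lambda)$) becomes an isomorphism after localizing at every prime of height $\le 1$; it is injective, and its cokernel is supported on $\{\mathfrak m\}$, hence finite. Moreover $N^{\vee\vee}$ is reflexive, and a reflexive module over a two-dimensional regular local ring is free by the Auslander--Buchsbaum formula (it has depth $2$, so projective dimension $0$), so $N^{\vee\vee}\cong\Lambda^r$ with $r$ the generic rank of $M$. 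Hence $N\simeq\Lambda^r$.

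To decompose $Z$, observe that since $Z$ is finitely generated, torsion, and has no finite submodule, its associated primes $\mathfrak p_1,\dots,\mathfrak p_k$ all have height one, and a minimal primary decomposition $0=\bigcap_i Q_i$ inside $Z$ yields an embedding $Z\hookrightarrow\bigoplus_i Z/Q_i$ which becomes an isomorphism after localizing at each $\mathfrak p_i$, hence has finite cokernel. Each $Z/Q_i$ is $\mathfrak p_i$-primary, and $(Z/Q_i)_{\mathfrak p_i}$ is a finite-length module over the DVR $\Lambda_{\mathfrak p_i}$, so it is isomorphic to $\bigoplus_j\Lambda_{\mathfrak p_i}/\mathfrak p_i^{e_{ij}}$. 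Writing $E_i=\bigoplus_j\Lambda/\mathfrak p_i^{e_{ij}}$, one promotes the resulting localized isomorphism to an actual $\Lambda$-linear map $Z/Q_i\to E_i$ by clearing denominators; it is an isomorphism after localizing at $\mathfrak p_i$, and since both modules are $\mathfrak p_i$-primary its kernel and cokernel are supported on $\{\mathfrak m\}$, hence finite. Therefore $Z\simeq\bigoplus_{i,j}\Lambda/\mathfrak p_i^{e_{ij}}$; grouping the summands with $\mathfrak p_i=(p)$ gives the factors $\Lambda/p^{m_i}$, and those with $\mathfrak p_i=(f_j)$ give the factors $\Lambda/f_j(T)^{n_j}$, the $f_j$ being irreducible distinguished polynomials by Weierstrass preparation.

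Finally I would splice the two halves, upgrading $0\to Z\to M\to N\to 0$ to a pseudo-split sequence. Its class lies in $\Ext^1_\Lambda(N,Z)$, and this group is finite: from $0\to N\to N^{\vee\vee}\to C\to 0$ with $N^{\vee\vee}$ free and $C$ finite one gets $\Ext^1_\Lambda(N,Z)\cong\Ext^2_\Lambda(C,Z)$, which is finitely generated with support in $\operatorname{Supp}(C)=\{\mathfrak m\}$, hence finite. Its annihilator is $\mathfrak m$-primary, so by prime avoidance there is $\xi$ annihilating $\Ext^1_\Lambda(N,Z)$ and lying outside every $\mathfrak p_i$. Then $\xi$ kills the extension class, so the pushout of the sequence along $\xi\colon Z\to Z$ splits; the induced map $M\to Z\oplus N$ has, by the snake lemma, kernel the $\xi$-torsion of $Z$ (which vanishes, $\xi$ being a non-zero-divisor on $Z$) and cokernel $Z/\xi Z$ (which is supported on $\{\mathfrak m\}$, hence finite). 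So $M\simeq Z\oplus N\simeq\Lambda^r\oplus\bigoplus_i\Lambda/p^{m_i}\oplus\bigoplus_j\Lambda/f_j(T)^{n_j}$. The steps I expect to be the main obstacle are the last two: promoting the DVR-level decomposition of $Z$ to a genuine pseudo-isomorphism over $\Lambda$ via clearing denominators (arranging that kernel and cokernel be finite, not merely torsion), and establishing the finiteness of $\Ext^1_\Lambda(N,Z)$ together with the prime-avoidance choice of $\xi$ that splits off the free summand up to pseudo-isomorphism; the rest is bookkeeping with the structure of $\Lambda$ and its height-one localizations.
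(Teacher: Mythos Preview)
The paper does not actually prove this theorem: it is stated without proof in the background section (\S\ref{The Classical Iwasawa Main Conjecture}) as a classical result of Serre and Iwasawa, with the exposition attributed to the references \cite{MR3840072}, \cite{MR1421575}, and \cite{sharifi}. So there is no ``paper's own proof'' to compare against.

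That said, your sketch is essentially the standard argument one finds in those references, and it is correct in outline. The reduction to the case of no finite submodule, the identification of the torsion-free quotient with a free module via reflexivity and Auslander--Buchsbaum, the primary decomposition of the torsion part followed by the DVR structure theorem at each height-one localization, and the $\Ext^1$ argument to split off the free part up to pseudo-isomorphism are all the classical steps. Your identification of the two delicate points is accurate: lifting the localized decomposition of the $\mathfrak p_i$-primary pieces back to $\Lambda$ requires some care (one must check the resulting map is genuinely a pseudo-isomorphism and not merely an isomorphism after localization at height $\le 1$, though here these coincide since both modules are $\mathfrak p_i$-primary), and the finiteness of $\Ext^1_\Lambda(N,Z)$ via $\Ext^2_\Lambda(C,Z)$ together with the choice of $\xi$ avoiding the associated primes of $Z$ is exactly what makes the splitting work. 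One small point worth making explicit in the last step: the cokernel $Z/\xi Z$ is finite because $\operatorname{Supp}(Z/\xi Z)\subseteq \operatorname{Supp}(Z)\cap V(\xi)$, and since $\xi\notin\mathfrak p_i$ for each associated prime $\mathfrak p_i$ of $Z$, this intersection is contained in $\{\mathfrak m\}$.
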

The invariants $r, m_i, n_j, f_j$ of $M$ are unique up to permutation of the sequence $(m_1, \dots ,m_s)$ and simultaneous permutation of the sequences $(n_1, \dots ,n_t)$ and $(f_1(T), \dots ,f_t(T))$. The nonnegative integer $r$ is, of course, the {\em rank} of $M$. 
The rank of an Iwasawa module is a pseudo-isomorphism invariant.
When $M$ is also $\Lambda$-torsion (i.e., of rank zero), three additional pseudo-isomorphism invariants of $M$ play special roles in classical Iwasawa theory:
\begin{itemize}
    \item The {\em $\mu$-invariant} of $M$ is $\mu(M) = \sum_{i=1}^s m_i$.

    \item The {\em $\lambda$-invariant} of $M$ is $\lambda(M) = \sum_{j=1}^t n_j \operatorname{deg}(f_j)$.

    \item The {\em characteristic polynomial} of $M$ is $f^M(T) = p^{\mu(M)} \prod_{j=1}^t f_j(T)^{n_j}$.
\end{itemize}
It turns out that $X_{\infty}$ is $\Lambda$-torsion, and $\mu(X_{\infty})$ and $\lambda(X_{\infty})$ agree with the numbers $\mu$ and $\lambda$ from \Cref{Iwasawa's theorem}. In other words, the structure of $X_{\infty}$ determines the asymptotic growth rates of the $p$-part of class groups in a $\Zp$-extension. 

For the remainder of this section, we will focus our attention on the cyclotomic $\Zp$-extension in which $F_n=\Q(\zeta_{p^n})$ for each $n$, and its corresponding Iwasawa module $X_{\infty}$. In this case we have $\Gamma = \lim_{n\rightarrow\infty}\gal{\mathbb{Q}(\zeta_{p^n})/\mathbb{Q}(\zeta_p)}$. As a consequence of our running assumption that $p$ is an {\em odd} prime, the profinite group $\hat{\mathbb{Z}}_p^{\times}$ of $p$-adic units splits as a product $\hat{\mathbb{Z}}_p\times \mathbb{F}_p^{\times}$. In terms of Galois groups, this splitting takes the form $\Gamma^{\prime} \cong \Gamma\times \Delta$, where:
\begin{itemize}
\item $\Gamma^{\prime} = \gal{F_{\infty}/\Q} \cong \Zpx$,
\item and $\Delta = \gal{F/\Q} \cong \mathbb{F}_p^{\times}$.
\end{itemize}
We furthermore define $\Lambda^{\prime}$ as $\Zp[[\Gamma']]$. We have an isomorphism of topological $\Zp$-algebras $\Lambda^{\prime} \cong \Lambda[\Delta]$. Using the product decomposition $\Gamma' \cong \Gamma \times \Delta$, it is natural to think of $\Gamma$ as the group of principal units $1+p\Zp\cong \Zp$, and of $\Delta$ as the $(p-1)$st roots of unity, both sitting inside $\Zpx$.

The theory of Iwasawa modules can be extended very simply to $\Lambda'$-modules in the following way. There exist idempotent elements $\epsilon_0,\epsilon_1, \dots ,\epsilon_{p-2} \in \Zp[\Delta]$ of the form
\begin{align*}
    \epsilon_j = \frac{1}{p-1} \sum_{a=1}^{p-1} \omega^{-j}(\sigma_a) \sigma_a.
\end{align*}
For $a=1,2,\dots, p-1$, $\sigma_a \in \Delta$ sends $\zeta_p$ to $\zeta_p^a$, and $\omega:\Delta \to \Zpx$ denotes the {\em Teichm\"{u}ller character}, given by letting $\omega(\sigma_a)$ be the unique $(p-1)$st root of unity in $\Zp$ such that $\omega(\sigma_a) \equiv a \mod{p}$. 

The idempotents $\epsilon_0, \dots ,\epsilon_{p-2}$ yield a splitting $\Lambda^{\prime}\cong \epsilon_0\Lambda^{\prime} \times \dots \times \epsilon_{p-2}\Lambda^{\prime}$, in which each factor $\epsilon_{j}\Lambda^{\prime}$ is isomorphic, as a topological ring, to $\Lambda$. Hence, for any $\Lambda'$-module $M$, we have a canonical splitting $M \cong \bigoplus_{j=0}^{p-2}\epsilon_j M$ where each eigenspace $\epsilon_j M$ is an Iwasawa module. Since $X_{\infty}$ is, in fact, a $\Lambda'$-module, we can consider its eigenspaces $\epsilon_j X_{\infty}$ and the associated characteristic polynomials $f^{\epsilon_j X_{\infty}}(T)$. 

In \cite{MR0163900}, Kubota and Leopoldt analytically constructed $p$-adic $L$-functions which interpolate the special values of Dirichlet $L$-functions. Specifically, if $\chi$ is a Dirichlet character, then there exists a $p$-adic $L$-function $L_p(s,\chi)$, with $s$ taking values in the domain $\Zp$, such that
\begin{align*}
    L_p(1-n,\chi) = (1-\chi\omega^{-n}(p)p^{n-1})\cdot L(1-n,\chi\omega^{-n})
\end{align*}
for all $n \geq 1$. The essential point here is that the Kubota--Leopoldt $p$-adic $L$-function is {\em constructed by means of $p$-adic analysis}, and {\em it is constructed in such a way that its special values are known in advance to be meaningful.}

Iwasawa \cite{MR0360526} constructed $p$-adic $L$-functions of Dirichlet characters by an entirely different method, exploiting the isomorphism $\Zp[[\Gamma]] \cong \Zp[[T]]$ to great effect. Iwasawa constructed a power series $f(T, \omega^{1-j}) \in \Zp[[T]]$ such that, after a change of variable,
\begin{align*}
    f((1+p)^s-1,\omega^{1-j}) = L_p(s,\omega^{1-j})
\end{align*}
and he famously made the following conjecture which was later proved by Mazur and Wiles \cite{MR0742853}.
\begin{theorem}[Iwasawa Main Conjecture]
    For $j=3,5,\dots, p-2$, the following is an equality of ideals in $\Lambda$.
    \begin{align}
\label{imc}        (f^{\epsilon_j X_{\infty}}(T)) &= (f(T,\omega^{1-j}))
    \end{align}
\end{theorem}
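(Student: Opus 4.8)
This statement is the Iwasawa Main Conjecture, and the theorem is due to Mazur and Wiles (with a later, more streamlined proof via Euler systems); a genuinely self-contained argument is far beyond a sketch, so the plan below describes the \emph{architecture} of the proof and isolates where the depth lies. The starting point is bookkeeping: fix, for each odd $j$ in the range $3,5,\dots,p-2$, the two torsion $\Lambda$-modules in play, the algebraic side $\epsilon_j X_\infty$ and the analytic side $\Lambda/(f(T,\omega^{1-j}))$, and reduce the problem to a comparison of two principal ideals of $\Lambda\cong\Zp[[T]]$. A crucial preliminary is the vanishing of the $\mu$-invariant on both sides: for the cyclotomic tower over an abelian base this is Ferrero--Washington, and it guarantees that $\epsilon_j X_\infty$ has no nonzero finite $\Lambda$-submodule, so that characteristic ideals multiply across the relevant short exact sequences and the passage to the inverse limit $X_\infty=\varprojlim_n A_n$ loses no information.

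Next I would prove the \emph{aggregate} identity: $\prod_{j\ \mathrm{odd}}f^{\epsilon_j X_\infty}(T)$ and $\prod_{j\ \mathrm{odd}}f(T,\omega^{1-j})$ coincide up to a unit of $\Lambda$. This is the $p$-adic analytic class number formula carried up the tower: the classical analytic class number formula writes $h_{F_n}^{-}$ as an explicit unit times a product of generalized Bernoulli numbers, the Kubota--Leopoldt $p$-adic $L$-functions interpolate those Bernoulli numbers, and Iwasawa's asymptotic analysis of $\lvert\epsilon_j A_n\rvert$ converts the limit of these orders into the product of the characteristic polynomials of the $\epsilon_j X_\infty$. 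Granting this, it suffices to establish a \emph{single} divisibility for each individual $j$: equality in the product then forces equality in every factor.

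Then comes the hard part, the one-sided divisibility $f^{\epsilon_j X_\infty}(T)\mid f(T,\omega^{1-j})$ for each odd $j$ (equivalently the reverse inclusion of ideals), for which there are two routes. \emph{(i) Euler systems} (Thaine, Kolyvagin, Rubin): the norm-compatible system of cyclotomic units $\{c_{p^r}\}_r$ is a vertical Euler system in the cyclotomic tower; Rubin's Euler-system machine converts it, via Kolyvagin derivative classes and an induction on their ``depth'', into an upper bound for the $\Lambda$-module $\epsilon_j X_\infty$ in terms of the index of cyclotomic units inside all units, and the $p$-adic class number formula identifies that index, eigenspace by eigenspace, with $f(T,\omega^{1-j})$. \emph{(ii) Modular curves} (the original Mazur--Wiles route): one works on the tower of modular curves $X_1(p^r)$, uses the Eisenstein ideal in the Hecke algebra and a cusp form congruent to a suitable Eisenstein series, and extracts from the reducible-but-indecomposable Galois representation on a lattice in the associated Galois module --- by Ribet's method --- an unramified abelian pro-$p$ extension of the cyclotomic tower large enough to force the divisibility, after controlling ramification at $p$ and the reduction behavior.

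The main obstacle is precisely this last step: the aggregate identity is essentially formal, but producing \emph{enough} arithmetic --- annihilating relations from an Euler system, or unramified extensions from congruences between modular forms --- to account for the full $p$-adic $L$-value is the deep content of the Main Conjecture, and neither the Euler-system formalism nor the geometry of modular curves can be bypassed. The only secondary difficulty is keeping the $\Delta$-eigenspace decomposition and the pseudo-isomorphism and inverse-limit bookkeeping honest, which is exactly where the vanishing of $\mu$ does real work.
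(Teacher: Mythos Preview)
Your sketch is a reasonable high-level outline of the known proofs of the Iwasawa Main Conjecture, but the paper does not attempt to prove this theorem at all. It is stated in the background section (\cref{The Classical Iwasawa Main Conjecture}) purely as a known result, attributed to Mazur and Wiles \cite{MR0742853}, and used only to motivate the ``weak form'' \eqref{weak main conj} whose spectral analogue is the actual subject of the paper. So there is nothing in the paper to compare your proposal against: the authors simply cite the theorem and move on.

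That said, your architectural summary is accurate as far as it goes: the reduction to a single divisibility via the aggregate class-number formula, with the hard input being either Rubin's Euler-system argument from cyclotomic units or the original Mazur--Wiles modular-curve argument via the Eisenstein ideal, is indeed how the theorem is proved in the literature. Your remark that Ferrero--Washington is a ``crucial preliminary'' is slightly overstated for the Mazur--Wiles approach (they do not need $\mu=0$ as input), but it does streamline the Euler-system route.
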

The left-hand side of \eqref{imc} is an {\em algebraic} $p$-adic $L$-function, i.e., a $p$-adic $L$-function extracted in a simple way from an Iwasawa module, but whose special values at integers are {\em a priori} of uncertain significance. The right-hand side of \eqref{imc} is an {\em analytic} $p$-adic $L$-function (like those constructed by Kubota--Leopoldt), with {\em a priori} meaningful special values at negative integers, but of completely uncertain connection to algebraic constructions like cohomology or Iwasawa modules.
The Iwasawa Main Conjecture asserts that, up to multiplication by a unit in the power series ring $\hat{\mathbb{Z}}_p[[T]]$, the algebraic $p$-adic $L$-function coincides with the analytic $p$-adic $L$-function.

The equality \eqref{imc} yields that
\begin{align*}
    f^{\epsilon_j X_{\infty}}((1+p)^{1-n}-1) &\sim_p L(1-n,\omega^{1-j-n})
\end{align*}
for all $n \geq 1$. The moral is that, as a consequence of the Iwasawa Main Conjecture, 
\begin{equation}
\label{weak main conj}
\parbox{6in}{\em{The $p$-adic valuations of the special values of the characteristic polynomial $f^{\epsilon_j X_{\infty}}$ agree with the $p$-adic valuations of the special values of a classical Dirichlet $L$-function.}}
\end{equation}
We will refer to statement \eqref{weak main conj} as the {\em weak form of the Main Conjecture}. It is this weak form for which we will be able to formulate a spectral analogue, in Theorem \ref{imc 2}.

\subsection{The $p$-adic complex $K$-theory of a spectrum yields a $2(p-1)$-tuple of Iwasawa modules}
\label{tuples section}

Let $KU$ denote the periodic complex $K$-theory spectrum, and $\KUp$ its $p$-adic completion. For a spectrum $X$, the splitting of the $\Lambda^{\prime}$-module $(\KUp)^0(X)$ as a direct sum
 $\epsilon_0(\KUp)^0(X)\oplus \dots \oplus\epsilon_{p-2}(\KUp)^0(X)$ is well-known in topology, dating back to Adams; our understanding is that \cite{MR0251716} is the original reference. This splitting is described in explicitly Iwasawa-theoretic terms in \cite{MR1367298} and in \cite{MR2327028}. 
Similarly, in odd degrees, we have the $(p-1)$-tuple of Iwasawa modules $\epsilon_0(\KUp)^{-1}(X)\oplus \dots \oplus\epsilon_{p-2}(\KUp)^{-1}(X)$. By Bott periodicity, we do not need to work with the $p$-adic $K$-groups in all degrees: it suffices to consider the $2(p-1)$-tuple of Iwasawa modules coming from any single even degree and any single odd degree. In this paper we choose to work with $(\KUp)^0$ and $(\KUp)^{-1}$.

\subsection{Review of Hahn--Mitchell's results}
\label{Review of Hahn--Mitchell's results}

In \cite{MR2327028}, Rebekah Hahn and Steve Mitchell developed some basic ideas and results in spectral Iwasawa theory. We review the most relevant ideas from their paper.
\begin{definition}\label{pseudo-iso of spectra}
A map of $K(1)$-local spectra $X\rightarrow Y$ is a {\em pseudo-equivalence} if the induced maps in $p$-adic $K$-theory $(\KUp)^0(Y) \rightarrow (\KUp)^0(X)$ and $(\KUp)^{-1}(Y) \rightarrow (\KUp)^{-1}(X)$ each have finite kernel and finite cokernel. 
\end{definition}
Hence a map of $K(1)$-local spectra $X\rightarrow Y$ is a pseudo-equivalence if and only if its induced map of Iwasawa modules $\epsilon_j(\KUp)^i(Y) \rightarrow \epsilon_j(\KUp)^i(X)$ is a pseudo-isomorphism for all $i,j$---or equivalently, a pseudo-isomorphism for all $i\in \{0,-1\}$ and all $j\in \{0,1, \dots ,p-2\}$.

\begin{definition}\label{def of elementary spectrum}
An Iwasawa module $M$ is {\em elementary cyclic} if $M$ is isomorphic to either $\Lambda$, or $\Lambda/p^i$ for some positive integer $i$, or $\Lambda/f^i$ for some positive integer $i$ and some irreducible distinguished polynomial $f$. 

An Iwasawa module $M$ is {\em elementary} if it is a direct sum of elementary cyclic Iwasawa modules.

A $K(1)$-local spectrum $X$ is {\em elementary} if the Iwasawa module $\epsilon_j(\KUp)^i(X)$ is elementary for all $i,j$.
\end{definition}

Recall that we have a single fixed odd prime $p$ throughout this paper. Consequently it is safe to use the terms defined in Definitions \ref{pseudo-iso of spectra} and \ref{def of elementary spectrum} more broadly, by saying that:
\begin{itemize}
\item a spectrum $X$ (not necessarily $K(1)$-local) is {\em elementary} if its $K(1)$-localization is elementary.
\item A map of (not necessarily $K(1)$-local) spectra $X \rightarrow Y$ is a {\em pseudo-equivalence} if the induced maps $(\KUp)^0(Y) \rightarrow (\KUp)^0(X)$ and $(\KUp)^{-1}(Y) \rightarrow (\KUp)^{-1}(X)$ each have finite kernel and finite cokernel.
\item Spectra $X,Y$ are {\em pseudo-equivalent} if there is a zigzag of pseudo-equivalences connecting $X$ and $Y$.
\end{itemize}

\begin{theorem}(\cite[Theorem 9.3]{MR2327028})\label{hahn-mitchell thm 1}
Let $X$ be a $K(1)$-local spectrum such that the $\Lambda^{\prime}$-modules $(\KUp)^0(X)$ and $(\KUp)^{-1}(X)$ are each finitely generated. Then there exist $K(1)$-local spectra $Y$ and $Z$, and pseudo-equivalences
\[ X \stackrel{\sim}{\longrightarrow} Y \stackrel{\sim}{\longleftarrow} Z ,\]
with $Z$ elementary.
\end{theorem}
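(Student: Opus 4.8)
The plan is to prove Theorem~\ref{hahn-mitchell thm 1} by a direct construction, following the standard pattern from classical Iwasawa theory: build an elementary model $Z$ from the structure theorem, and then exhibit both $X$ and $Z$ as receiving pseudo-equivalences from (or mapping pseudo-equivalently to) a common spectrum. The key observation is that $p$-adic $K$-theory is a (co)representable functor on $K(1)$-local spectra: by the $K(1)$-local version of Brown representability, a finitely generated $\Lambda'$-module $M$ that is realizable as $(\KUp)^i$ of some spectrum can be ``plugged in'' as the $K$-theory of a $K(1)$-local spectrum, and more importantly maps of such modules are realized by maps of spectra up to the usual obstruction-theoretic ambiguity controlled by $\Ext^1$. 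So the first step is to recall (from Hahn--Mitchell, or reprove) that each individual elementary cyclic Iwasawa module $\Lambda$, $\Lambda/p^i$, $\Lambda/f^i$ is realized as $(\KUp)^0$ or $(\KUp)^{-1}$ of an explicit small $K(1)$-local spectrum --- e.g. $L_{K(1)}S^0$ and its relatives, Moore spectra, and spectra built from the Adams self-maps --- and that finite wedges of these realize arbitrary elementary Iwasawa modules in the prescribed even/odd slots.

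The second step is the construction of $Z$. Apply the Serre--Iwasawa structure theorem (equation~\eqref{iwasawa decomp}) to each of the $2(p-1)$ Iwasawa modules $\epsilon_j(\KUp)^i(X)$ for $i\in\{0,-1\}$, $j\in\{0,\dots,p-2\}$, to obtain elementary modules $E^i_j$ together with pseudo-isomorphisms $\epsilon_j(\KUp)^i(X)\to E^i_j$ (note the direction: the structure theorem gives a pseudo-isomorphism from $M$ to its elementary model when $M$ is $\Lambda$-torsion, and one handles the free part $\Lambda^r$ separately and similarly). Assemble the $E^i_j$ into the $K$-theory of an elementary $K(1)$-local spectrum $Z$ using step one, arranging that $\epsilon_j(\KUp)^i(Z)\cong E^i_j$.

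The third and central step is to realize the pseudo-isomorphisms by an actual zigzag of maps of spectra. Here is where the $\Ext^1$-obstruction enters and where I expect the main difficulty to lie. The collection of pseudo-isomorphisms $\epsilon_j(\KUp)^i(X)\to E^i_j$ assembles into a pseudo-isomorphism of $\Lambda'$-modules $(\KUp)^i(X)\to (\KUp)^i(Z)$ for $i\in\{0,-1\}$; one wants a map of spectra $Z\to X$ (or a zigzag through an auxiliary $Y$) inducing it. Since $p$-adic $K$-theory does not detect all maps of spectra on the nose --- there is a short exact sequence relating $[Z,X]$ in the $K(1)$-local category to $\Hom$ and $\Ext^1$ of $K$-theory modules (a universal-coefficient / Adams spectral sequence phenomenon for $KU_p^\wedge$-modules) --- one must check the relevant $\Ext^1$-group either vanishes or that the obstruction class can be killed. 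The standard fix, and the reason the theorem is stated with an intermediate $Y$ receiving pseudo-equivalences from both sides, is: first build $Z\to X$ realizing the module map up to the $\Ext^1$-ambiguity on a ``free-enough'' piece, and if a direct map cannot be produced, replace $X$ by a pseudo-equivalent $Y$ (e.g. cofiber or fiber of an auxiliary map into a wedge of $K(1)$-local spheres and Moore spectra designed to absorb the obstruction) for which the obstruction vanishes. One then checks that the constructed maps induce pseudo-isomorphisms in each $\epsilon_j(\KUp)^i$ --- which is immediate from the construction since they induce the chosen pseudo-isomorphisms on $K$-theory --- hence are pseudo-equivalences in the sense of Definition~\ref{pseudo-iso of spectra}. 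The bookkeeping that finite generation of $(\KUp)^0(X)$ and $(\KUp)^{-1}(X)$ is exactly what makes both the structure theorem applicable and the relevant $\Ext$-groups small enough to control is the routine-but-essential glue holding the argument together.
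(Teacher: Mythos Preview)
The paper does not give its own proof of this statement: Theorem~\ref{hahn-mitchell thm 1} is simply quoted from Hahn--Mitchell \cite[Theorem~9.3]{MR2327028}, with no argument supplied. So there is no ``paper's proof'' to compare your proposal against.

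That said, your sketch is headed in the right direction and is broadly consistent with how Hahn--Mitchell actually argue. The paper does hint at the method later, in the proof of Theorem~\ref{pseudoequiv thm}, where it notes that ``Hahn--Mitchell use Bousfield's algebraicity results from \cite{MR0796907} to construct $\tilde{X}$.'' This is exactly the machinery underlying your ``universal-coefficient / Adams spectral sequence phenomenon'': Bousfield proves that the $KU_{(p)}$-local (and hence $K(1)$-local) homotopy category is algebraic in a precise sense, with $[Z,X]$ controlled by $\Hom$ and $\Ext^1$ in the category of $(KU_{(p)})_*(KU_{(p)})$-comodules, and that comodules of injective dimension $\leq 1$ are realizable. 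Your three-step outline (realize elementary modules, apply the structure theorem, lift the pseudo-isomorphism through an intermediate $Y$ to kill the $\Ext^1$ obstruction) is the correct skeleton; the part you flag as ``the main difficulty'' is handled cleanly by Bousfield's results rather than by ad hoc obstruction arguments, and that is what you would need to invoke to turn the sketch into a proof.
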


One has to be a bit careful when writing something like ``$X$ and $Y$ are pseudo-equivalent,'' since if there exists a pseudo-equivalence $X\rightarrow Y$, there does not necessarily exist a pseudo-equivalence $Y\rightarrow X$.

Under slightly stronger hypotheses, there is a convenient particular case of Theorem \ref{hahn-mitchell thm 1} in which Hahn--Mitchell get a slightly stronger result:
\begin{theorem}(\cite[Theorem 9.6]{MR2327028})\label{hahn-mitchell thm 2}
Let $X$ be a $K(1)$-local spectrum such that the $\Lambda^{\prime}$-modules $(\KUp)^0(X)$ and $(\KUp)^{-1}(X)$ are each finitely generated, and such that each of the $\Lambda$-modules $\epsilon_j(\KUp)^i(X)$ is $\Lambda$-torsion for all $i,j$.
 Then there exists an elementary $K(1)$-local spectrum $\tilde{X}$ and pseudo-equivalences
\begin{align*} X \stackrel{\sim}{\longrightarrow} \tilde{X} &\ \ \ \ \mbox{and}\  \tilde{X} \stackrel{\sim}{\longrightarrow} X .\end{align*}
\end{theorem}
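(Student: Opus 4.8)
The plan is to promote the one-sided statement of Theorem \ref{hahn-mitchell thm 1} to the two-sided statement claimed here, by exploiting the fact that, on finitely generated $\Lambda$-torsion modules, pseudo-isomorphism is a symmetric relation. I would first isolate the reduction: it suffices to prove that pseudo-equivalence is \emph{symmetric} on the class $\mathcal{T}$ of $K(1)$-local spectra $W$ for which every $\epsilon_j(\KUp)^i(W)$ is finitely generated and $\Lambda$-torsion --- that is, that whenever $f\colon A\to B$ is a pseudo-equivalence with $A,B\in\mathcal{T}$, there exists \emph{some} pseudo-equivalence $B\to A$ (not necessarily a homotopy inverse of $f$). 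The class $\mathcal{T}$ is closed under pseudo-equivalence in either direction: a module admitting a pseudo-isomorphism to or from a finitely generated $\Lambda$-module is itself finitely generated since $\Lambda$ is Noetherian, and rank is a pseudo-isomorphism invariant. Granting the symmetry statement, Theorem \ref{hahn-mitchell thm 1} supplies pseudo-equivalences $X\stackrel{\sim}{\longrightarrow} Y\stackrel{\sim}{\longleftarrow} Z$ with $Z$ elementary and $X,Y,Z\in\mathcal{T}$; reversing each of the two pseudo-equivalences and composing through $Y$ (composites of pseudo-isomorphisms are pseudo-isomorphisms) yields pseudo-equivalences $X\stackrel{\sim}{\longrightarrow} Z$ and $Z\stackrel{\sim}{\longrightarrow} X$, so $\tilde{X}=Z$ works.

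To prove symmetry, start from a pseudo-equivalence $f\colon A\to B$ in $\mathcal{T}$. Its induced maps $\epsilon_j(\KUp)^i(B)\to\epsilon_j(\KUp)^i(A)$ are pseudo-isomorphisms of finitely generated $\Lambda$-torsion modules, so by symmetry of the relation $\simeq$ on such modules there exist pseudo-isomorphisms $\epsilon_j(\KUp)^i(A)\to\epsilon_j(\KUp)^i(B)$ for all $i,j$; assembled over $j$ and compatibly with the $\Zpx$-action and Bott periodicity, these form a morphism $g$ in the algebraic category underlying the $K(1)$-local $K$-theory functor whose restriction to each Iwasawa module is a pseudo-isomorphism. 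The remaining --- and main --- task is to realize $g$ by a map of $K(1)$-local spectra $B\to A$; by definition any such realization is a pseudo-equivalence. For this one runs the $\KUp$-based $K(1)$-local Adams spectral sequence computing $[B,A]$, whose $E_2$-term is $\Ext$ in the category of $(\KUp)_*\KUp$-comodules (equivalently, $(\KUp)_*$-modules with continuous $\Zpx$-action). Because $p$ is odd, $\Zpx\cong\Zp\times\F_p^{\times}$ has continuous cohomological dimension one, so this $\Ext$, together with the contribution of $\Ext^1$ over $\KUp_*$, is concentrated in homological degrees $0,1,2$; one then shows $g$ lies in filtration $0$, that its potential differential into the filtration-$2$ line vanishes, and that it therefore lifts to a genuine map of spectra.

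I expect the realization step, and specifically the vanishing of the obstruction in the filtration-$2$ part of the $E_2$-term --- which mixes $\Ext^1$ over $\KUp_*$ with $H^1_{\mathrm{cont}}(\Zpx;-)$ --- to be the main obstacle. I would attack it by combining three ingredients: the sparseness coming from $p$ being odd; the structure theorem, used to reduce the $\Ext$ computations to the cyclic modules $\Lambda$, $\Lambda/p^i$, $\Lambda/f^i$, for which the relevant resolutions have length one over $\Lambda$ and so the filtration-$2$ term stays small and explicit; and the torsion hypothesis on the $\epsilon_j(\KUp)^i(A)$, which forces the obstruction group to be small enough that a nonzero obstruction can either be ruled out directly or be absorbed by correcting $g$ by a map supported on the finite kernels and cokernels of $f$ --- a correction that does not disturb the property of being a pseudo-isomorphism. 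The existence of $K(1)$-local spectra realizing $\Lambda$, $\Lambda/p^i$, and $\Lambda/f^i$, needed to carry out such summand-wise computations, is already built into the proof of Theorem \ref{hahn-mitchell thm 1}.
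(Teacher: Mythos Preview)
The paper does not give its own proof of this statement: Theorem \ref{hahn-mitchell thm 2} is stated purely as a citation of \cite[Theorem 9.6]{MR2327028}, with no argument supplied. So there is no proof in the paper to compare your proposal against.

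That said, a brief remark on your sketch. Your overall architecture --- reduce to showing pseudo-equivalence is symmetric on the torsion class $\mathcal{T}$, then realize the algebraically-guaranteed reverse pseudo-isomorphisms as a map of spectra via the $K(1)$-local Adams spectral sequence --- is the natural line of attack and is indeed close in spirit to how Hahn--Mitchell proceed in \cite{MR2327028}. The part you flag as the main obstacle, the realization step, is genuinely the crux: one needs Bousfield's algebraicity results (as in \cite{MR0796907}, invoked elsewhere in this paper in the proof of Theorem \ref{pseudoequiv thm}) to know that the relevant obstruction groups vanish, rather than an ad hoc spectral-sequence computation. Your proposed strategy of reducing to elementary cyclic summands is sound, but the correction-by-finite-pieces maneuver you describe at the end is vague and would need to be made precise; in practice Hahn--Mitchell's argument leans on the structure of the $K(1)$-local category rather than on such corrections.
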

We will refer to $\tilde{X}$ as the {\em Hahn--Mitchell replacement} for $X$.

In Theorem \ref{hahn-mitchell thm 2}, the phrase ``for all $i,j$'' can be safely replaced by ``for all $i\in \{0,-1\}$ and all $j\in \{0,1, \dots ,p-2\}$''.

\section{The characteristic polynomials and Iwasawa $\mu$-invariants of finite spectra}
\label{Section SIMC}

\subsection{$\KUp$ of the Spheres}
\label{KUp of the Spheres}

As a warm-up exercise, we let $X$ be an even-dimensional sphere $S^{2i}$, and we consider the Iwasawa-theoretic properties of the Iwasawa modules $\epsilon_0(\KUp)^*(X),\dots,\epsilon_{p-2}(\KUp)^*(X)$. We have the isomorphism $(\KUp)^0(S^{2i}) \cong \Zp$ for all $i \in \Z$, and $\Gamma' \cong \Zpx$ acts on $(\KUp)^0(S^{2i})$ via the stable Adams operations, i.e. for all $u \in \Zpx$ and all $x \in (\KUp)^0(S^{2i})$,
\begin{align*}
    u \cdot x &= u^ix.
\end{align*}
Among the $p-1$ eigenspaces $\epsilon_j (\KUp)^0(S^{2i})$ of $(\KUp)^0(S^{2i})$, it turns out that only one is nontrivial:
\begin{proposition}
    $\epsilon_j (\KUp)^0(S^{2i}) \cong 
    \begin{cases}
        (\KUp)^0(S^{2i}) & \text{if } i \equiv j \mod{p-1}\\
        0 & \text{otherwise}
    \end{cases}$
\end{proposition}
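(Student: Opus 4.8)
The plan is to compute directly how the idempotent $\epsilon_j \in \Zp[\Delta]$ acts on the rank-one free $\Zp$-module $(\KUp)^0(S^{2i}) \cong \Zp$, and to recognize the resulting scalar as a character sum over $\Delta$, so that the statement reduces to orthogonality of characters.

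First I would record the action of $\Delta$ on $(\KUp)^0(S^{2i})$. Under the splitting $\Gamma' \cong \Gamma \times \Delta$, the subgroup $\Delta$ is identified with the $(p-1)$st roots of unity inside $\Zpx$, so that $\sigma_a \in \Delta$ corresponds to the unit $\omega(\sigma_a) \in \Zpx$. Since $\Gamma'$ acts on $(\KUp)^0(S^{2i})$ through the stable Adams operations by the rule $u \cdot x = u^i x$, the element $\sigma_a$ acts on $x \in (\KUp)^0(S^{2i})$ by $\sigma_a \cdot x = \omega(\sigma_a)^i x$; this is meaningful for every $i \in \Z$, including negative $i$, because $\omega(\sigma_a)$ is a unit. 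Plugging this into $\epsilon_j = \frac{1}{p-1}\sum_{a=1}^{p-1}\omega^{-j}(\sigma_a)\sigma_a$ shows that $\epsilon_j$ acts on $(\KUp)^0(S^{2i})$ as multiplication by the scalar
\[
 c_{i,j} \;=\; \frac{1}{p-1}\sum_{a=1}^{p-1} \omega^{-j}(\sigma_a)\,\omega(\sigma_a)^i \;=\; \frac{1}{p-1}\sum_{a=1}^{p-1}\omega(\sigma_a)^{i-j},
\]
where the last equality uses multiplicativity of $\omega$.

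Next I would observe that $\sigma_a \mapsto \omega(\sigma_a)^{i-j}$ is a character of the cyclic group $\Delta$ of order $p-1$, and it is the trivial character precisely when $(p-1)\mid (i-j)$. By orthogonality of characters, $c_{i,j} = 1$ when $i \equiv j \bmod{p-1}$ and $c_{i,j} = 0$ otherwise. Finally, $\epsilon_j(\KUp)^0(S^{2i})$ is by definition the image of multiplication by $c_{i,j}$ on $(\KUp)^0(S^{2i}) \cong \Zp$; since $\Zp$ is torsion-free, this image is all of $(\KUp)^0(S^{2i})$ when $c_{i,j}=1$ and is $0$ when $c_{i,j}=0$, which is exactly the claimed dichotomy.

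I do not expect a real obstacle here: the only points requiring a moment's care are that the exponent $i$ may be negative (handled since $\omega(\sigma_a)$ is a unit) and the identification of $\Delta$'s action with $u = \omega(\sigma_a)$ under the decomposition $\Gamma' \cong \Gamma \times \Delta$; the substance of the argument is simply orthogonality of Dirichlet characters modulo $p$.
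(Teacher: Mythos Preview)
Your proof is correct and follows essentially the same approach as the paper: compute the action of $\epsilon_j$ on a generator of $(\KUp)^0(S^{2i})\cong\Zp$ as the character sum $\frac{1}{p-1}\sum_{a=1}^{p-1}\omega^{i-j}(\sigma_a)$, and evaluate it by orthogonality. You are a bit more explicit than the paper in naming the scalar $c_{i,j}$, invoking character orthogonality, and justifying the passage from the scalar to the image of the idempotent, but the substance is identical.
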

\begin{proof}
    We can think of $\Delta$ as sitting inside $\Zpx$ via the Teichmüller character which means that $\sigma_a \in \Delta$ acts on $(\KUp)^0(S^{2i})$ by multiplication by $\omega^i(\sigma_a)$. So, for $1 \in (\KUp)^0(S^{2i})$, we have
    \begin{align*}
        \epsilon_j \cdot 1 &= \frac{1}{p-1} \sum_{a=1}^{p-1} \omega^{-j}(\sigma_a) \sigma_a \cdot 1\\
        &= \frac{1}{p-1} \sum_{a=1}^{p-1} \omega^{i-j}(\sigma_a) \\
 &= 
        \begin{cases}
            1 & \text{if } i \equiv j \mod{p-1}\\
            0 & \text{otherwise}.
        \end{cases}
    \end{align*}
\end{proof}
This mean that $\epsilon_j (\KUp)^0(S^{2i})$ is a finitely generated $\Lambda$-torsion Iwasawa module such that $\mu(\epsilon_j (\KUp)^0(S^{2i})) = 0$ for all $j$, and it is nontrival when $j \equiv i \mod{p-1}$. We now restrict our attention to the nontrivial case $j\equiv i \mod{p-1}$. The characteristic polynomial of an Iwasawa module essentially records the action of a topological generator in $\Gamma \cong 1+p\Zp$, so we can construct the characteristic polynomial of $\epsilon_j (\KUp)^0(S^{2i})$, which we will denote as $f_{i,j}(T)$ for ease of notation, in the following way. Recall that there exists an isomorphism $\Zp[[1+p\Zp]] \cong \Zp[[T]]$ which is defined by sending the topological generator $1+p \in 1+p\Zp$ to $1+T \in \Zp[[T]]$. 
Hence, in $\Zp[[T]]$, we identify $1+T$ and $(1+p)^i$ and conclude that the characteristic polynomial of $\epsilon_j (\KUp)^0(S^{2i})$ is
\begin{align*}
    f_{i,j}(T) &= 
    \begin{cases}
        T-(1+p)^i+1 & \text{if } i \equiv j \mod{p-1} \\
        1 & \text{otherwise}
    \end{cases}    
\end{align*}
Note that this implies that $\epsilon_j (\KUp)^0(S^{2i}) \simeq \Lambda/f_{i,j}(T)$
and $\lambda(\epsilon_j (\KUp)^0(S^{2i})) = 1$ in the nontrivial case. Otherwise, the $\lambda$-invariant is $0$.

In the same vein as the weak form of the Main Conjecture, \eqref{weak main conj}, the $p$-adic valuations of the special values of the characteristic polynomials $f_{i,j}(T)$ agree with the orders of the stable homotopy groups of the $K(1)$-local sphere. Specifically, we get the following relationship.

\begin{proposition} \label{SIMC for Spheres}
    For the characteristic polynomial $f_{i,j}$ of the Iwasawa module $\epsilon_j (\KUp)^0(S^{2i})$, where $j=0,1,\dots,p-2$, we have
    \begin{align*}
        f_{i,j}((1+p)^{1-n}-1) &\sim_p \left|\pi_{2(n+i-1)-1} L_{K(1)}S^0\right|
    \end{align*}
    for all $n \equiv 1 - j \mod{p-1}$.
\end{proposition}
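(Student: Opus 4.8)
The plan is to compute the $p$-adic valuation of each side of the asserted relation explicitly and check that they agree. I would first dispose of the degenerate case $i \not\equiv j \pmod{p-1}$. Here $f_{i,j}(T) = 1$, so the left-hand side is a unit. Writing $m = n+i-1$, the hypothesis $n \equiv 1-j \pmod{p-1}$ forces $m \equiv i-j \not\equiv 0 \pmod{p-1}$, and since $\pi_{2m-1}L_{K(1)}S^0 = 0$ whenever $(p-1)\nmid m$, both sides have $p$-adic valuation $0$.

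For the main case $i \equiv j \pmod{p-1}$, I would substitute $T = (1+p)^{1-n}-1$ into $f_{i,j}(T) = T - (1+p)^i + 1$, obtaining
\[
 f_{i,j}\big((1+p)^{1-n}-1\big) = (1+p)^{1-n} - (1+p)^i = (1+p)^i\big((1+p)^{-(n+i-1)}-1\big),
\]
so that the $p$-adic valuation of the left-hand side equals $v_p\big((1+p)^{-(n+i-1)}-1\big)$, the factor $(1+p)^i$ being a unit. At this point I would invoke the standard lemma that, for $p$ odd and any nonzero integer $m$, $v_p\big((1+p)^m - 1\big) = 1 + v_p(m)$ (equivalently, $1+p$ is a topological generator of $1+p\Zp$ on which the $p$-adic logarithm has valuation $1$), noting that when $n+i-1 = 0$ the expression is literally $0$, of infinite valuation. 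Thus the left-hand side has $p$-adic valuation $1 + v_p(n+i-1)$, interpreted as $+\infty$ when $n+i-1 = 0$.

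On the topological side, I would recall the homotopy of the $K(1)$-local sphere at an odd prime: $\pi_{-1}L_{K(1)}S^0 \cong \Zp$, $\pi_{2m}L_{K(1)}S^0 = 0$ for $m \neq 0$, and for $m \neq 0$
\[
 \pi_{2m-1}L_{K(1)}S^0 \cong \begin{cases} \Z/p^{1+v_p(m)} & \text{if } (p-1)\mid m,\\ 0 & \text{otherwise.} \end{cases}
\]
This can either be cited, or deduced from the fibre sequence $L_{K(1)}S^0 \to \KUp \xrightarrow{\psi^g - 1} \KUp$ for $g$ a topological generator of $\Zpx$, together with the facts that the stable Adams operation $\psi^g$ acts on $\pi_{2m}\KUp \cong \Zp$ by multiplication by $g^m$ and that $v_p(g^m - 1)$ equals $1 + v_p(m)$ when $(p-1)\mid m$ and $0$ otherwise --- the same elementary valuation computation as before. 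Taking $m = n+i-1$, the hypotheses $n \equiv 1-j$ and $i \equiv j \pmod{p-1}$ give $(p-1)\mid m$, so $\big|\pi_{2(n+i-1)-1}L_{K(1)}S^0\big|$ has $p$-adic valuation $1 + v_p(n+i-1)$, the group being $\Zp$ precisely when $n+i-1 = 0$. Comparing with the previous paragraph completes the argument.

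The computation is entirely elementary, so there is no genuine ``hard part''; the only thing requiring care is the bookkeeping --- matching the index $2(n+i-1)-1$ (whose shift by $i$ records the Tate twist coming from $S^{2i}$, while the parameter $1-n$ plays the role of the classical argument $1-n$ of a Dirichlet $L$-function) against the homotopy of $S^0$ itself, and citing the valuation identity $v_p((1+p)^m - 1) = 1 + v_p(m)$ and the homotopy groups of $L_{K(1)}S^0$ in exactly the form needed, including the edge case $n+i-1 = 0$ where both sides are ``infinite.''
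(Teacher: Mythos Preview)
Your proof is correct and follows exactly the approach the paper has in mind. The paper does not actually write out a proof of this proposition, stating only that it ``merely amounts to a rephrasing, in Iwasawa-theoretic terms, of the classical calculation'' of $\pi_*L_{K(1)}S^0$; your direct computation of $v_p\big((1+p)^{1-n}-(1+p)^i\big)$ via the identity $v_p\big((1+p)^m-1\big)=1+v_p(m)$ and comparison with the known homotopy groups is precisely that rephrasing, and your handling of the degenerate case $i\not\equiv j$ and the edge case $n+i-1=0$ is careful and correct.
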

We do not write out a proof of Proposition \ref{SIMC for Spheres}, since it merely amounts to a rephrasing, in Iwasawa-theoretic terms, of the classical calculation (essentially \cite[Theorem 8.10]{MR737778})
\begin{align*}
        \pi_t L_{K(1)} S^0 = 
        \begin{cases}
            \Zp & \text{if } t=-1,0\\
            \Z/p^{k+1}\Z & \text{if } t=2(p-1)p^k r-1, r \not\equiv 0 \mod{p}\\
            0 & \text{otherwise}
        \end{cases}
    \end{align*}

The stable homotopy groups of $L_{K(1)}S^0$ are trivial in degrees $\nequiv -1$ modulo $2(p-1)$, with the one exception of $\pi_0(L_{K(1)}S^0)$, which is isomorphic to $\Zp$. 
Hence Proposition \ref{SIMC for Spheres} recover the orders of all the nontrivial stable homotopy groups of $L_{K(1)}S^0$ other than $\pi_0 L_{K(1)}S^0$. 

There is one special case of Proposition \ref{SIMC for Spheres} which requires a bit of explanation:
if $n=1-i$, then \ref{SIMC for Spheres} makes a claim about the order of $\pi_{-1}(L_{K(1)}S^0)$, which is the infinite group $\Zp$. 
Since $\Zp = \lim_m \Z/p^m\Z$, we formally treat the $p$-adic valuation of $\left|\Zp\right|$ as the limit
\begin{align*} 
    \nu_p\left(\left|\Zp\right|\right) &= \lim_m \nu_p\left(\left|\Z/p^m\Z\right|\right) = \infty,
\end{align*}
which is consistent with the claim $0 \sim_p \left|\Zp\right|$ made by Proposition \ref{SIMC for Spheres}.

\begin{remark}
    The $\Lambda^{\prime}$-module $(\KUp)^0(S^{2i})$ also arises number-theoretically. Consider the cyclotomic $\Zp$-extension $F_{\infty}/F$ where $F_n = \Q(\zeta_{p^n})$. If we let $R_n$ denote the group of roots of unity sitting inside the $p$-adic completion of $\mathcal{O}_{F_n}^{\times}$, then $R_n = \langle \zeta_{p^n} \rangle \cong \Z/p^n\Z$. We can arrange these groups into a tower by considering the norm maps $N:R_{n+1} \to R_n$. The reader can check that $N(\zeta_{p^{n+1}})=\zeta_{p^n}$. Hence we have that $R= \lim_n R_n \cong \Zp$. Each $(\Z/p^n\Z)^{\times} \cong \gal{F_n/\Q}$ acts on $\Z/p^n\Z \cong R_n$ by left multiplication, and, in the limit, $\Zpx \cong \lim_n \gal{F_n/\Q}$ acts on $\Zp \cong R$ by left multiplication. Therefore, we get the $\Lambda'$-module isomorphism $R \cong (\KUp)^0(S^2)$.

    Suppose we ``twist" the Galois action of $(\Z/p^n\Z)^{\times}$ on $\Z/p^n\Z \cong R_n$ so that, for $a \in (\Z/p^n\Z)^{\times}$ and $x \in \Z/p^n\Z$, $a \cdot x = a^i x$. Denote the resulting $\Lambda'$-module as $R(i)$. Then $R(i) \cong (\KUp)^0(S^{2i})$.
\end{remark}

\subsection{The $\mu$-invariants of finite spectra.}

There is little to say about $\mu$-invariants of finite spectra: they are all zero, for easy reasons.
\begin{theorem}\label{mu-invs vanish}
Let $X$ be a finite spectrum. Then, for every $i,j$, the $\mu$-invariant of each of the Iwasawa modules $\epsilon_j(\KUp)^i(X)$ is zero. 
\end{theorem}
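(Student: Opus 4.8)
The plan is to reduce to the structure theorem for finitely generated Iwasawa modules and show that each $\epsilon_j(\KUp)^i(X)$ has no $\Lambda/p^m$ summands in its pseudo-isomorphism decomposition. First I would observe that, for a finite spectrum $X$, the $p$-adic $K$-groups $(\KUp)^0(X)$ and $(\KUp)^{-1}(X)$ are finitely generated $\Zp$-modules — indeed, by the Atiyah--Hirzebruch spectral sequence (or simply because $X$ is built from finitely many cells), $(\KUp)^i(X)$ is a finitely generated $\Zp$-module, hence in particular a finitely generated $\Lambda$-module and a fortiori a finitely generated $\Lambda'$-module. This means each $\epsilon_j(\KUp)^i(X)$ is a finitely generated $\Lambda$-module, so the structure theorem \eqref{iwasawa decomp} applies.

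The key point is the $\mu$-invariant detects the $\Lambda/p^m$ summands, and a finitely generated $\Lambda$-module $M$ with $\mu(M) > 0$ cannot be finitely generated as a $\Zp$-module: a single summand $\Lambda/p^m \cong \Zp[[T]]/p^m = (\Z/p^m)[[T]]$ is not finitely generated over $\Zp$, since it contains the free $\Z/p^m$-module on the countable basis $1, T, T^2, \dots$. More precisely, if $M \simeq \Lambda^r \oplus \bigoplus_i \Lambda/p^{m_i} \oplus \bigoplus_j \Lambda/f_j^{n_j}$ via a pseudo-isomorphism, then $M/pM$ surjects onto (resp. the pseudo-isomorphism gives, up to finite kernel/cokernel, an identification with) a module that is infinite-dimensional over $\F_p$ as soon as $r > 0$ or some $m_i > 0$; but $M = \epsilon_j(\KUp)^i(X)$ is finitely generated over $\Zp$, so $M/pM$ is a finite-dimensional $\F_p$-vector space. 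Therefore $r = 0$ — so the module is automatically $\Lambda$-torsion, and the $\lambda$- and $\mu$-invariants are defined — and all $m_i = 0$, i.e.\ $\mu(\epsilon_j(\KUp)^i(X)) = \sum_i m_i = 0$.

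I would carry this out in the following order: (1) recall that $(\KUp)^i(X)$ is a finitely generated $\Zp$-module for finite $X$; (2) deduce that each $\epsilon_j(\KUp)^i(X)$, being a $\Zp$-module summand, is also finitely generated over $\Zp$; (3) note that finitely generated over $\Zp$ implies $\dim_{\F_p}(M/pM) < \infty$; (4) apply the structure theorem and observe that $\dim_{\F_p}(M/pM) = \infty$ whenever $r>0$ or some $m_i>0$ (using that pseudo-isomorphisms have finite kernel and cokernel, so they change $\dim_{\F_p}(M/pM)$ only by a finite amount, or alternatively reduce to the finitely generated $\Zp$-module statement directly); (5) conclude $\mu = 0$. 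The mild subtlety — the only place requiring a moment's care — is step (4): pseudo-isomorphism is not symmetric in general, so one should argue with a pseudo-isomorphism $M \to E$ onto the elementary module $E$ and note that finite kernel and cokernel force $M$ and $E$ to have the same $\Zp$-rank and the same "$p$-torsion bulk," so $E$ is finitely generated over $\Zp$ iff $M$ is; then the computation $\dim_{\F_p}(E/pE) = \infty$ for $r>0$ or $m_i>0$ finishes it. None of this is hard, which is why the theorem is labeled as having "easy reasons."
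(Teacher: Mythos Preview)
Your proposal is correct and follows essentially the same approach as the paper: both argue that a nonzero $\mu$-invariant would force a $\Lambda/p^m$ summand (up to pseudo-isomorphism), which is too large to sit inside the $p$-adic $K$-theory of a finite spectrum. The paper phrases the size obstruction as ``uncountably many linearly independent $p$-torsion elements'' in $\Lambda/p^k \cong (\Z/p^k)[[T]]$, while you phrase it as ``not finitely generated over $\Zp$'' (equivalently, $\dim_{\F_p}(M/pM)=\infty$); your treatment of the pseudo-isomorphism step is somewhat more explicit than the paper's one-line remark, but the underlying argument is the same.
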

\begin{proof}
If the $\mu$-invariant of $\epsilon_j(\KUp)^i(X)$ were nonzero, then $\epsilon_j(\KUp)^i(X)$ would be pseudo-isomorphic to an Iwasawa module with $\Lambda/p^k \cong \mathbb{Z}/p^k\mathbb{Z}[[T]]$ as a summand, for some positive integer $k$. Hence $\Lambda/p^k$, an Iwasawa module with uncountably many linearly independent $p$-torsion elements, would occur as a summand in that Iwasawa module. A pseudo-isomorphism could only eliminate finitely many of these elements, after which $\epsilon_j(\KUp)^i(X)\subset (\KUp)^i(X)$ would still have uncountably many linearly independent $p$-torsion elements. This is impossible for a finite spectrum. Hence the $\mu$-invariant of $\epsilon_j(\KUp)^i(X)$ must have been zero.
\end{proof}

\subsection{The characteristic polynomials and Iwasawa main conjecture for finite spectra}
\label{Finite Spectra with Torsion-Free Homology}

We begin with an easy lemma about pseudo-equivalences.
\begin{lemma}\label{pseudoequivs lemma 1}\leavevmode
\begin{itemize}
\item
A map of spectra $f: X\rightarrow Y$ with homotopy fiber $\fib f$ is a pseudo-equivalence if and only if $(\KUp)^i(\fib f)$ is finite for all $i$.
\item If $X,Y$ are finite spectra and $f: X\rightarrow Y$ is a pseudo-equivalence, then $f$ induces an isomorphism in rational homotopy.
\end{itemize}
\end{lemma}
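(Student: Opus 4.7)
The plan is to apply the long exact sequence in $\KUp$-cohomology associated to the cofiber sequence $\fib f \to X \to Y$. By Bott periodicity in $\KUp$, it suffices throughout to work with $i \in \{0,-1\}$.

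For the first bullet, the relevant segment
\[ (\KUp)^{i-1}(\fib f) \to (\KUp)^i(Y) \to (\KUp)^i(X) \to (\KUp)^i(\fib f) \to (\KUp)^{i+1}(Y) \]
of the long exact sequence yields the short exact sequence
\[ 0 \to \mathrm{coker}\bigl((\KUp)^i(f)\bigr) \to (\KUp)^i(\fib f) \to \ker\bigl((\KUp)^{i+1}(f)\bigr) \to 0. \]
Thus $(\KUp)^i(\fib f)$ being finite for all $i$ immediately yields finiteness of the kernels and cokernels of each $(\KUp)^i(f)$, so $f$ is a pseudo-equivalence; conversely, if $f$ is a pseudo-equivalence then $(\KUp)^i(\fib f)$ is sandwiched between two finite groups and is itself finite.

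For the second bullet, I would first use that the finite spectra form a thick subcategory of the stable homotopy category, so the fiber $\fib f$ of a map between finite spectra is itself finite. Then each $KU^i(\fib f)$ is a finitely generated abelian group, and $(\KUp)^i(\fib f) \cong KU^i(\fib f) \otimes_{\Z} \hat{\Z}_p$. By the first bullet, this $p$-completion is finite, which forces $KU^i(\fib f) \otimes \Q = 0$ for all $i$. The rational Atiyah--Hirzebruch spectral sequence for $KU^*(\fib f)$ converges (since $\fib f$ is finite) and degenerates (its differentials being torsion-valued), yielding an isomorphism of graded $\Q$-vector spaces
\[ KU^i(\fib f) \otimes \Q \;\cong\; \bigoplus_{n \in \Z} H^{i+2n}(\fib f;\Q). \]
Vanishing of the left-hand side forces each summand to vanish, since the summand dimensions are non-negative and sum to zero. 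Hence $H^*(\fib f;\Q) = 0$, and so $H_*(\fib f;\Q) = 0$. Since rational stable homotopy agrees with rational homology for any spectrum, this gives $\pi_*(\fib f) \otimes \Q = 0$, and the long exact sequence of the fibration $\fib f \to X \to Y$ then implies that $\pi_*(f) \otimes \Q$ is an isomorphism.

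I do not anticipate a serious obstacle; the one input worth flagging is the standard identification $KU^i(Z) \otimes \Q \cong \bigoplus_n H^{i+2n}(Z;\Q)$ for finite $Z$, which follows from the fact that all differentials in the integral $KU$-AHSS are of finite order and so the spectral sequence degenerates rationally.
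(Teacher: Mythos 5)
Your proof is correct. The first bullet is exactly the paper's argument (the paper just declares it ``elementary from the long exact sequence''), and your explicit short exact sequence
\[ 0 \to \operatorname{coker}\bigl((\KUp)^i(f)\bigr) \to (\KUp)^i(\fib f) \to \ker\bigl((\KUp)^{i+1}(f)\bigr) \to 0 \]
is the right bookkeeping. For the second bullet you take a genuinely different route. The paper argues by contrapositive through Spanier--Whitehead duality: if $H\Q\wedge f$ is not an equivalence then neither is $H\Q\wedge Df$, hence neither is $\KUp\wedge H\Q\wedge Df$ (using that $\KUp\wedge H\Q$ splits as a wedge of suspensions of $H\Q$), and a chain of isomorphisms identifying $(\KUp)^i(X)\otimes\Q$ with $(\KUp)_{-i}(DX\wedge H\Q)$ then shows $f$ fails to be a pseudo-equivalence. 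You instead argue directly on the fiber: $\fib f$ is finite by thickness, your first bullet makes its $p$-adic $K$-theory finite, finiteness of $KU^i(\fib f)\otimes\hat{\Z}_p$ for a finitely generated group kills the free rank, the rational Chern character then kills $H_*(\fib f;\Q)$, and rational stable homotopy equals rational homology. Both proofs ultimately rest on the same fact (rationally, $KU$ is a wedge of even suspensions of $H\Q$), but yours leans on the first bullet and the thick-subcategory property where the paper leans on duality; your version is arguably more self-contained and avoids the duality step entirely. No gaps.
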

\begin{proof}\leavevmode\begin{itemize}
\item
Elementary from the long exact sequence induced in $p$-adic $K$-theory by the homotopy fiber sequence $\fib f \rightarrow X \stackrel{f}{\longrightarrow} Y$.
\item 
Suppose, by contrapositive, that $H\mathbb{Q}\wedge f : H\mathbb{Q}\wedge X \rightarrow H\mathbb{Q}\wedge Y$ is not an equivalence. Then the Spanier-Whitehead dual of $f$, $Df$, would also have the property that $H\mathbb{Q}\wedge Df$ is not an equivalence.
Hence $\KUp\wedge H\mathbb{Q}\wedge Df$ would also not be an equivalence, since $\KUp\wedge H\mathbb{Q}$ splits as a wedge of copies of suspensions of $H\mathbb{Q}$. Now by the chain of isomorphisms
\begin{align}
\label{iso 0901}    (\KUp)^i(X) \otimes_{\Z} \Q &\cong (\KUp)^i(X)[p^{-1}]\\
\nonumber    &\cong \operatorname{colim}_p (\KUp)^i(X)\\
\nonumber    &\cong\pi_{-i} (\operatorname{hocolim}_p F(X, \KUp))\\
\nonumber    &\cong \pi_{-i} (F(X, \operatorname{hocolim}_p \KUp))\\
\nonumber    &\cong \pi_{-i} (F(X, H\Q \wedge \KUp))\\
\nonumber    &\cong \pi_{-i} (F(X,S^0) \wedge H\Q \wedge \KUp)\\
\label{iso 0909}    &\cong (\KUp)_{-i}(DX \wedge H\Q),
\end{align}
the map $f$ does not induce an isomorphism in rationalized $p$-adic $K$-theory. Hence the map induced by $f$ in $p$-adic $K$-theory must have infinite kernel or infinite cokernel in at least one degree, i.e., $f$ is not a pseudo-equivalence.
\end{itemize}
\end{proof}

\begin{theorem}\label{pseudoequiv thm}
Let $X$ be a finite spectrum. Then 
$X$ is pseudo-equivalent to a canonical-up-to-homotopy wedge sum 
\[ (X_{even})_{torsfree}\vee (X_{odd})_{torsfree}\] of finite spectra, such that $H_*\left((X_{even})_{torsfree};\mathbb{Z}_{(p)}\right)$ is torsion-free and concentrated in even degrees, and $H_*\left((X_{odd})_{torsfree};\mathbb{Z}_{(p)}\right)$ is torsion-free and concentrated in odd degrees.
\end{theorem}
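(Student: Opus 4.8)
The plan is to construct a single pseudo-equivalence out of a finite wedge of spheres. The mechanism is Lemma~\ref{pseudoequivs lemma 1}: a map of finite spectra $f$ is a pseudo-equivalence exactly when $(\KUp)^i(\fib f)$ is finite for all $i$, and --- as I will check --- this holds whenever $f$ is a rational homology equivalence. So it is enough to exhibit a rational homology equivalence $\phi\colon\overset{\circ}{X}\to X$ with $\overset{\circ}{X}$ a finite wedge of spheres, because such a wedge splits tautologically into its even-dimensional spheres and its odd-dimensional spheres, which will serve as $(X_{even})_{torsfree}$ and $(X_{odd})_{torsfree}$.

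First I would build $\phi$. Since $X$ is finite, $H_\ast(X;\Q)$ is finite-dimensional and the rationalized Hurewicz map gives $\pi_n(X)\otimes\Q\cong H_n(X;\Q)$; hence, lifting a basis of each $\pi_n(X)\otimes\Q$ to elements of $\pi_n(X)$, I get for each $n$ maps $S^n\to X$ whose Hurewicz images form a $\Q$-basis of $H_n(X;\Q)$. Wedging these over all $n$ produces $\phi\colon\overset{\circ}{X}:=\bigvee_\alpha S^{n_\alpha}\to X$ out of a finite wedge of spheres, inducing an isomorphism on $H_\ast(-;\Q)$ by construction. Then I set $(X_{even})_{torsfree}:=\bigvee_{n_\alpha\ \mathrm{even}}S^{n_\alpha}$ and $(X_{odd})_{torsfree}:=\bigvee_{n_\alpha\ \mathrm{odd}}S^{n_\alpha}$, so that $\overset{\circ}{X}=(X_{even})_{torsfree}\vee(X_{odd})_{torsfree}$ and the $\Zploc$-homology of each summand is free --- in particular torsion-free --- and concentrated in even (resp.\ odd) degrees, as required. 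Up to homotopy equivalence $(X_{even})_{torsfree}$ is just $\bigvee_k(S^{2k})^{\vee\,\dim_\Q H_{2k}(X;\Q)}$, and likewise for the odd part, so these spectra depend only on the rational Betti numbers of $X$, which is the sense in which they are canonical; and since $(\KUp)^0(S^{2i})$ is elementary cyclic, $\overset{\circ}{X}$ is elementary, so this genuinely refines Theorem~\ref{hahn-mitchell thm 2}.

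It remains to see that $\phi$ is a pseudo-equivalence, i.e., by Lemma~\ref{pseudoequivs lemma 1}, that $(\KUp)^i(\fib\phi)$ is finite for all $i$. The spectrum $\fib\phi$ is again finite, and the long exact sequence in $H_\ast(-;\Q)$ forces $H_\ast(\fib\phi;\Q)=0$; being finite and rationally trivial, $\fib\phi$ then has $H_\ast(\fib\phi;\Z)$, hence $H_\ast(\fib\phi;\Zp)$, finite in every degree. Feeding this into the Atiyah--Hirzebruch spectral sequence $H^\ast(\fib\phi;\pi_\ast\KUp)\Rightarrow(\KUp)^\ast(\fib\phi)$ --- which has finite $E_2$-page in each total degree and, $\fib\phi$ being finite, a finite filtration --- yields that $(\KUp)^i(\fib\phi)$ is finite for all $i$. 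I expect this finiteness statement to be the main (really the only) non-formal step; everything else is manipulation of long exact sequences and wedge decompositions. One degenerate case should be recorded separately: if $H_\ast(X;\Q)=0$ then $\overset{\circ}{X}$ is the trivial spectrum and $\phi$ is a pseudo-equivalence simply because the finite, rationally trivial spectrum $X$ has finite $\KUp$-cohomology, by the same spectral sequence argument applied to $X$ itself.
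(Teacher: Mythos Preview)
Your argument is correct, and it is genuinely different from---and substantially shorter than---the paper's own proof. The paper proceeds in three stages: it first invokes Hahn--Mitchell's Theorem~\ref{hahn-mitchell thm 2} to pass from $X$ to an elementary $K(1)$-local spectrum $\tilde{X}$; it then uses Bousfield's algebraicity results to split $\tilde{X}$ as $\tilde{X}_{even}\vee\tilde{X}_{odd}$ with $p$-adic $K$-theory concentrated in the expected parity; and finally it runs an inductive ``homology decomposition'' argument, cofibering off Moore spectra to kill $p$-local torsion in homology degree by degree, with a further Atiyah--Hirzebruch argument to verify that the resulting spectra really do have homology in the correct parity. Your route bypasses all of this by going straight to a wedge of spheres via the rational Hurewicz map and observing, via Lemma~\ref{pseudoequivs lemma 1} and a one-line Atiyah--Hirzebruch bound, that any rational equivalence of finite spectra is a pseudo-equivalence.

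The trade-off is that the paper's $\overset{\circ}{X}$ is built from the Hahn--Mitchell replacement and so need not be a wedge of spheres (think of $\mathbb{CP}^n$), whereas yours always is; correspondingly, the paper's construction makes the connection to Theorem~\ref{hahn-mitchell thm 2} and to Bousfield's comodule theory explicit, while yours makes the canonicity claim completely transparent, since a wedge of spheres is determined up to homotopy by the rational Betti numbers of $X$. For the downstream applications (Theorems~\ref{iwasawa invs and rational type} and~\ref{imc 2}) only pseudo-equivalence invariants and the homological hypotheses of Lemma~\ref{SIMC} are used, so either model of $\overset{\circ}{X}$ works equally well there.
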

\begin{proof}
A size argument analogous to that used to prove Theorem \ref{mu-invs vanish} shows that, for all $i,j$, the Iwasawa module $\epsilon_j(\KUp)^i(X)$ is finitely generated, and must have rank zero, hence must also be torsion. 
Theorem \ref{hahn-mitchell thm 2} then tells us that $X$ is pseudo-equivalent to its Hahn--Mitchell replacement $\tilde{X}$, whose $p$-adic $K$-theory admits an isomorphism 
\begin{align*}
 (\KUp)^i(\tilde{X}) 
  &\cong \oplus_{j=0}^{p-2}\epsilon_j(\KUp)^i(\tilde{X})
\end{align*}
for each $i$, such that $\epsilon_j(\KUp)^i(\tilde{X})$ is a direct sum of finitely many $\Lambda$-modules of the form $\Lambda/f(T)^m$ with $f(T)$ an irreducible distinguished polynomial, and $m$ a positive integer. Our point is simply that $(\KUp)^*(\tilde{X})$ is $p$-torsion-free.

Hahn--Mitchell use Bousfield's algebraicity results from \cite{MR0796907} to construct $\tilde{X}$. Among other results in \cite{MR0796907}, Bousfield shows that every graded $(KU_{(p)})_*(KU_{(p)})$-comodule $M$ splits into the direct sum of a summand $M_{even}$ concentrated in even degrees, and a summand $M_{odd}$ concentrated in odd degrees, and for each of these summands, if its injective dimension in the category of $(KU_{(p)})_*(KU_{(p)})$-comodules is $\leq 1$, then {\em that summand} is realizable as the $(KU_{(p)})$-homology of a spectrum, and this spectrum is unique up to $KU_{(p)}$-local equivalence. Bousfield also proves a finiteness result \cite[Theorem 9.7]{MR0796907}: if $M_{even}$ is finitely generated in each degree as a $\mathbb{Z}_{(p)}$-module, then one can find a {\em finite} spectrum whose $KU_{(p)}$-homology is $M_{even}$; and similarly for $M_{odd}$. Analogous results hold for $\KUp$, and are used by Hahn--Mitchell to prove the existence of $\tilde{X}$. There is no difficulty with switching between $p$-adic $K$-homology and $p$-adic $K$-cohomology in this argument, since $X$ is finite, so we may take Spanier-Whitehead duals to exchange the $K$-homology and $K$-cohomology whenever convenient. Consequently $\tilde{X}$ splits as a wedge $\tilde{X}_{even}\vee \tilde{X}_{odd}$,  with $\tilde{X}_{even}$ and $\tilde{X}_{odd}$ each finite spectra, with the $p$-adic $K$-theory of $\tilde{X}$ concentrated in even degrees, and with the $p$-adic $K$-theory of $\tilde{X}$ concentrated in odd degrees.

By Theorem \ref{hahn-mitchell thm 2}, $\tilde{X}$ has torsion-free $p$-adic $K$-theory, and hence $\tilde{X}_{even}$ and $\tilde{X}_{odd}$ each have torsion-free $p$-adic $K$-theory. We are not quite done, since we do not know that $\tilde{X}_{even}$ and $\tilde{X}_{odd}$ have torsion-free $p$-local {\em homology}. 
Torsion-free homology implies torsion-free $K$-theory, but the converse is not true: the Smith-Toda complex $V(1)$ is a counterexample.

We will now modify $\tilde{X}_{even}$ and $\tilde{X}_{odd}$ each by a pseudo-equivalence to produce finite spectra whose $p$-local homology is torsion-free. 
Here is our construction, which uses only very classical tools. 
Let $Y$ be a finite spectrum. 
Then there exists a finite sequence of maps of spectra
\begin{equation}\label{homology decomp}\xymatrix{ \pt \ar[r] & Y^a \ar[r] & Y^{a+1} \ar[r] & \dots \ar[r] & Y^b}\end{equation}
such that $Y^b = Y$ and such that the cofiber of the map $Y^{j-1}\rightarrow Y^{j}$ is a $p$-local Moore spectrum\footnote{By a {\em $p$-local Moore spectrum of type $(A,n)$}, we mean a spectrum $M$ such that $H_j(M,\mathbb{Z}_{(p)})$ is trivial if $j\neq n$, and is isomorphic to $A$ if $j=n$.} of type $(H_j(Y;\mathbb{Z}_{(p)}),j)$. Such a sequence is called a ``homology decomposition,'' and in the unstable case, dates back to the 1959 papers \cite{MR0110096} and \cite{MR0108791}; the method used in those papers also works to produce the stable construction in spectra. 

Now we begin an inductive construction. We start in the bottom degree of the homology decomposition \eqref{homology decomp}: write $Y((a))$ for $Y$ with the torsion summand $M(\tors H_a(Y;\mathbb{Z}_{(p)}),a)$ pinched off. That is, $Y((a))$ is the homotopy cofiber of the composite map $M(\tors H_a(Y;\mathbb{Z}_{(p)}),a) \rightarrow Y^a \rightarrow Y$. The map $Y\rightarrow Y((a))$ is a pseudo-equivalence by the first part of Lemma \ref{pseudoequivs lemma 1}, and the homology $H_*(Y((a));\mathbb{Z}_{(p)})$ is torsion-free in degrees $\leq a$.

That was the first step in an induction. Here is the inductive hypothesis: suppose that $j$ is some integer, and suppose we have already constructed a 
sequence of pseudo-equivalences of finite spectra 
\[ Y \rightarrow Y((a)) \rightarrow Y((a+1)) \rightarrow \dots \rightarrow Y((j))\]
such that, for each $i\leq j$, the homology $H_*(Y((i));\mathbb{Z}_{(p)})$ is torsion-free in degrees $\leq i$. 
Choose a homology decomposition 
\begin{equation}\label{homology decomp 2} \dots \rightarrow Y((j))^{j-1}\rightarrow Y((j))^{j} \rightarrow Y((j))^{j+1} \rightarrow \dots\end{equation}
for $Y((j))$. 
The homology group $H_{j+1}(Y((j));\mathbb{Z}_{(p)})$ decomposes, as a $\mathbb{Z}_{(p)}$-module, into the direct sum of a torsion summand and a torsion-free summand. This induces a wedge splitting of the Moore spectrum $M = Y((j))^{j+1}/Y((j))^{j}$ as \[ M(\tors H_{j+1}(Y;\mathbb{Z}_{(p)}),j+1)\vee M(\torsfree H_{j+1}(Y;\mathbb{Z}_{(p)}),j+1) .\]
Write $f$ for the composite map
\[
\Sigma^{-1} M(\tors H_{j+1}(Y;\mathbb{Z}_{(p)}),j+1) \hookrightarrow 
\Sigma^{-1} \left(M(\tors H_{j+1}(Y;\mathbb{Z}_{(p)}),j+1)\vee M(\torsfree H_{j+1}(Y;\mathbb{Z}_{(p)}),j+1) \rightarrow \right) \rightarrow Y((j))^{j} \rightarrow Y((j)).
\]
Let $Y((j+1))$ denote the homotopy cofiber of the map $f$.  
Then, from analysis of the long exact sequences induced in homology by the homotopy cofiber sequence 
\[ \Sigma^{-1} M(\tors H_{j+1}(Y;\mathbb{Z}_{(p)}),j+1) \rightarrow Y((j)) \rightarrow Y((j+1)),\]
we get that the $p$-local homology groups of $Y((j+1))^{\prime}$ agree with those of $Y((j))$ in degrees $\neq j+1$, hence are torsion-free in degrees $\leq j$, and furthermore $H_{j+1}(Y((j+1));\mathbb{Z}_{(p)})$ is torsion-free. The map $Y((j)) \rightarrow Y((j+1))$ is a pseudo-equivalence by the first part of Lemma \ref{pseudoequivs lemma 1}. This completes the inductive step.

Since $Y$ is finite, its homology is trivial except in finitely many degrees, so after finitely many steps in the induction, we have killed off all the torsion in $H_*(-;\mathbb{Z}_{(p)})$. Hence the homotopy colimit $\hocolim_j Y((j))$ is in fact reached after finitely many steps. We define $Y_{torsfree}$ to be that homotopy colimit, so that $Y\rightarrow Y_{torsfree}$ is a pseudo-equivalence.
At each stage in the induction, we have attached finitely many cells to $Y$, hence $Y_{torsfree}$ is finite, as desired.

There is one last claim to explain: by construction, $(X_{even})_{torsfree}$ has torsion-free $p$-local homology, and has $p$-adic $K$-theory concentrated in even degrees. 
We need to show that $H_*\left( (X_{even})_{torsfree};\mathbb{Z}_{(p)}\right)$ is also concentrated in even degrees. 
By a theorem of Dold (see 14.18 of \cite{MR0198464} or Corollary 2.6 of \cite{MR0254838}), the shortest nonzero differential in the Atiyah-Hirzebruch spectral sequence $H_*((X_{even})_{torsfree}; (\KUp)_*)\Rightarrow (\KUp)_*((X_{even})_{torsfree})$ is torsion-valued. Since $(\KUp)_*$ is torsion-free and $(X_{even})_{torsfree}$ has torsion-free $p$-local homology, this Atiyah-Hirzebruch spectral sequence collapses immediately. Hence, if $(X_{even})_{torsfree}$ had any nonvanishing $p$-local homology in odd degrees, it would also have nonvanishing $p$-adic $K$-homology in some odd degree. This is impossible by the following argument: by the universal coefficient sequence for the $p$-adic $K$-theory of a finite CW-complex (\cite{anderson}, \cite{MR0388375}, \cite[Theorem IV.4.5]{MR1417719}):
\[
 0 
  \rightarrow \Ext^1_{\hat{\mathbb{Z}}_p}\left( (\KUp)_{n-1}\left((X_{even})_{torsfree}\right),\hat{\mathbb{Z}}_p\right) 
  \rightarrow (\KUp)^n\left((X_{even})_{torsfree}\right)
  \rightarrow \hom_{\hat{\mathbb{Z}}_p}\left( (\KUp)_n\left((X_{even})_{torsfree}\right),\hat{\mathbb{Z}}_p\right) 
  \rightarrow 0,  
\]
 the torsion-freeness of $(\KUp)^*\left( (X_{even})_{torsfree}\right)$ and the vanishing of $(\KUp)^*\left((X_{even})_{torsfree}\right)$ in odd degrees implies that\linebreak $(\KUp)_*\left( (X_{even})_{torsfree}\right)$ also vanishes in odd degrees. We conclude that $(X_{even})_{torsfree}$ must have had trivial $p$-local homology in all odd degrees. A completely similar argument shows that $(X_{odd})_{torsfree}$ must have had trivial $p$-local homology in all even degrees. 
\end{proof}
\begin{definition}
Given a finite spectrum $X$, we write $\overset{\circ}{X}$ as an abbreviation for the spectrum $(X_{even})_{torsfree} \vee (X_{odd})_{torsfree}$ constructed in Theorem \ref{pseudoequiv thm}. We call $\overset{\circ}{X}$ the {\em torsion-free replacement of $X$}.
\end{definition}

\begin{theorem}\label{iwasawa invs and rational type}
Let $X$ be a finite spectrum. Then, for each $i,j$, the $\lambda$-invariant, $\mu$-invariant, and characteristic polynomial of the Iwasawa module $\epsilon_j(\KUp)^i(X)$ depend only on the {\em rational} homotopy type of $X$.
\end{theorem}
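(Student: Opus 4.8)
The plan is to prove the stronger assertion that each of the three invariants of $\epsilon_j(\KUp)^i(X)$ is given by an explicit formula in the rational Betti numbers $b_n(X):=\dim_{\mathbb{Q}}\bigl(\pi_n(X)\otimes\mathbb{Q}\bigr)=\dim_{\mathbb{Q}}H_n(X;\mathbb{Q})$ of $X$. Since a finite spectrum $X$ has $X\wedge H\mathbb{Q}$ equivalent to a wedge of suspensions of $H\mathbb{Q}$ containing exactly $b_n(X)$ copies of $\Sigma^nH\mathbb{Q}$, its rational homotopy type is precisely the data of the sequence $(b_n(X))_n$, so such formulas suffice. The $\mu$-invariant requires no argument: it is identically zero by Theorem \ref{mu-invs vanish}.

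For the $\lambda$-invariant and the characteristic polynomial, I would first reduce to the torsion-free replacement. By Theorem \ref{pseudoequiv thm} there is a pseudo-equivalence $X\to\overset{\circ}{X}=(X_{even})_{torsfree}\vee(X_{odd})_{torsfree}$, and, as recorded in the proof of that theorem, every Iwasawa module $\epsilon_j(\KUp)^i$ of a finite spectrum is finitely generated and torsion. Since the $\lambda$-invariant and the characteristic polynomial are pseudo-isomorphism invariants of finitely generated torsion Iwasawa modules, they coincide for $X$ and for $\overset{\circ}{X}$. Using additivity of $\KUp$ over wedges, the vanishing of $(\KUp)^i(Z)$ for odd $i$ when $H_*(Z;\mathbb{Z}_{(p)})$ is concentrated in even degrees (and the symmetric statement), and Lemma \ref{pseudoequivs lemma 1} (which gives $H_{2m}\bigl((X_{even})_{torsfree};\mathbb{Q}\bigr)=H_{2m}(X;\mathbb{Q})$ and $H_{2m-1}\bigl((X_{odd})_{torsfree};\mathbb{Q}\bigr)=H_{2m-1}(X;\mathbb{Q})$), this reduces the problem to computing $\epsilon_j(\KUp)^0$ of $Y:=(X_{even})_{torsfree}$ and $\epsilon_j(\KUp)^{-1}$ of $Y':=(X_{odd})_{torsfree}$.

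The heart of the argument is a skeletal-filtration computation. After $p$-localization (which does not affect $p$-adic $K$-theory), $Y$ admits a cell structure with exactly $b_{2m}(X)$ cells in each dimension $2m$ and none in odd dimensions, because $H_*(Y;\mathbb{Z}_{(p)})$ is free and concentrated in even degrees; thus the cofiber of $Y^{(2m-2)}\to Y^{(2m)}$ is $(S^{2m})^{\vee b_{2m}(X)}$. Every even skeleton has torsion-free homology, so its Atiyah--Hirzebruch spectral sequence collapses, exactly as in the proof of Theorem \ref{pseudoequiv thm}, and its $p$-adic $K$-theory is concentrated in even degrees; hence the long exact sequences in $(\KUp)^0$ break into $\Gamma'$-equivariant short exact sequences
\[ 0\to(\KUp)^0\bigl((S^{2m})^{\vee b_{2m}(X)}\bigr)\to(\KUp)^0\bigl(Y^{(2m)}\bigr)\to(\KUp)^0\bigl(Y^{(2m-2)}\bigr)\to 0. \]
Applying the exact functor $\epsilon_j$, using the standard multiplicativity of the characteristic polynomial in short exact sequences of finitely generated torsion Iwasawa modules, and feeding in the warm-up value $f^{\epsilon_j(\KUp)^0(S^{2m})}(T)=T-(1+p)^m+1$ when $m\equiv j\mod{p-1}$ (and $=1$ otherwise) from Section \ref{KUp of the Spheres}, an induction up the skeleta yields
\[ f^{\epsilon_j(\KUp)^0(X)}(T)=\prod_{m\equiv j\mod{p-1}}\bigl(T-(1+p)^m+1\bigr)^{b_{2m}(X)} \]
(equivalently: by naturality the Adams operation $\psi^{1+p}$ acts on $\epsilon_j(\KUp)^0(Y)$ with $(1+p)^m$-eigenspace of $\Qp$-dimension $b_{2m}(X)$). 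Since every factor is distinguished of degree $1$ and the $\mu$-invariant vanishes, taking degrees gives $\lambda\bigl(\epsilon_j(\KUp)^0(X)\bigr)=\sum_{m\equiv j\mod{p-1}}b_{2m}(X)$. Running the identical argument on $Y'$ with odd skeleta, via the $\Gamma'$-equivariant suspension isomorphism $(\KUp)^{-1}(S^{2m-1})\cong(\KUp)^0(S^{2m})$, gives $f^{\epsilon_j(\KUp)^{-1}(X)}(T)=\prod_{m\equiv j\mod{p-1}}(T-(1+p)^m+1)^{b_{2m-1}(X)}$ and $\lambda\bigl(\epsilon_j(\KUp)^{-1}(X)\bigr)=\sum_{m\equiv j\mod{p-1}}b_{2m-1}(X)$. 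All three invariants are thus expressed purely through the Betti numbers of $X$, proving the theorem.

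The step I expect to be the main obstacle is the $\Gamma'$-equivariant identification of the associated graded of the skeletal filtration --- that is, verifying that the long exact sequences in $p$-adic $K$-theory really do break into short exact sequences of Iwasawa modules. This rests on the collapse of the Atiyah--Hirzebruch spectral sequences of the torsion-free even (resp.\ odd) skeleta together with naturality of the Adams operations. Everything else is bookkeeping, modulo the standard multiplicativity of the characteristic polynomial in short exact sequences of finitely generated torsion $\Lambda$-modules, which I would cite rather than reprove.
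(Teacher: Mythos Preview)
Your argument is correct, and its overall architecture matches the paper's: both first invoke Theorem~\ref{pseudoequiv thm} to pass to the torsion-free replacement $\overset{\circ}{X}$, then use the AHSS collapse for a spectrum with torsion-free $\Z_{(p)}$-homology concentrated in a single parity, and finally invoke Lemma~\ref{pseudoequivs lemma 1} to identify the rational homotopy types of $X$ and $\overset{\circ}{X}$.

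The proofs diverge only in the final step. The paper argues abstractly: since $(\KUp)^*((X_{even})_{torsfree})$ is torsion-free it injects into its rationalization, and the Adams operations on $(\KUp)^*(-)\otimes\mathbb{Q}$ depend only on the rational homotopy type (via the chain of isomorphisms \eqref{iso 0901}--\eqref{iso 0909}); hence the $\Lambda$-module structure, and in particular the characteristic polynomial, is determined by $X\wedge H\mathbb{Q}$. You instead run an induction up an even-cell filtration of $(X_{even})_{torsfree}$, using the $\Gamma'$-equivariance of the resulting short exact sequences together with multiplicativity of the characteristic polynomial to obtain the explicit formula
\[
f^{\epsilon_j(\KUp)^0(X)}(T)=\prod_{m\equiv j\ (p-1)}\bigl(T-(1+p)^m+1\bigr)^{b_{2m}(X)}.
\]
Your route is slightly longer but buys more: these explicit formulas are exactly what the paper later needs in Lemma~\ref{SIMC} (the factorization $f^{\epsilon_j(\KUp)^0(X)}=\prod f_{i,j}^{r_i}$) and in Lemma~\ref{additivity of lambda} (the identities $\lambda(\epsilon_j(\KUp)^0(X))=\sum_{m\equiv j}b_{2m}(X)$), so your proof effectively absorbs those computations. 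The paper's approach is cleaner as a pure dependence statement but defers the explicit calculation. Your identification of the ``main obstacle'' is apt; the point that settles it is precisely that the cellular chain complex of an even-cell $p$-local spectrum has zero differentials, so every even skeleton again has torsion-free even-concentrated homology and the AHSS argument applies at each stage.
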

\begin{proof}
The characteristic polynomial of an Iwasawa module is a pseudo-isomorphism invariant, so $X$ has the same Iwasawa modules as the wedge of finite spectra $(X_{even})_{torsfree}\vee (X_{odd})_{torsfree}$ constructed in Theorem \ref{pseudoequiv thm}. Since $X_{even}$ (respectively, $X_{odd}$) has its $p$-local homology concentrated in even (respectively, odd) degrees, the Atiyah-Hirzebruch spectral sequence implies that its $p$-adic $K$-theory is also concentrated in even (respectively, odd) degrees. 

Hence, for each $i,j$, the Iwasawa module $\epsilon_j(\KUp)^{2i}(X)$ is pseudo-isomorphic to $\epsilon_j(\KUp)^{2i}((X_{even})_{torsfree})$. Since $(X_{even})_{torsfree}$ has torsion-free $p$-local homology, it also has torsion-free $p$-adic $K$-theory by Dold's Atiyah-Hirzebruch argument (mentioned already in the proof of Theorem \ref{pseudoequiv thm}). Hence the $p$-adic $K$-theory of $(X_{even})_{torsfree}$ embeds into its rationalization. The Adams operations on the rationalized $K$-theory of a finite spectrum depend only on the rationalization of the spectrum; this is an old observation, and easily provable by the chain of isomorphisms \eqref{iso 0901} through \eqref{iso 0909} together with the fact that $H\mathbb{Q}\wedge DX$ splits as a wedge of suspensions of $H\mathbb{Q}$. 
Hence the characteristic polynomial of each Iwasawa module $\epsilon_j(\KUp)^{2i}(X)$ is determined by the rational homotopy type of $(X_{even})_{torsfree}$. 

A completely analogous argument shows that the characteristic polynomial of each Iwasawa module $\epsilon_j(\KUp)^{2i+1}(X)$ is determined by the rational homotopy type of $(X_{odd})_{torsfree}$. Hence the characteristic polynomials of the Iwasawa modules of $X$ are determined by the rational homotopy type of $(X_{even})_{torsfree}\vee (X_{odd})_{torsfree}$. The second part of Lemma \ref{pseudoequivs lemma 1} then gives us that the the characteristic polynomials of the Iwasawa modules of $X$ are determined by the rational homotopy type of $X$ itself.
\end{proof}

\begin{lemma}
\label{SIMC}
    Let $X$ be a finite spectrum such that $H_*(X;\mathbb{Z}_{(p)})$ is torsion-free and concentrated in even degrees.  
Let $a$ and $b$ be the least and greatest integers $i$, respectively, such that $H_{2i}(X;\mathbb{Q})$ is nontrivial.
Then, for each $j=0,1,\dots,p-2$, we have
    \begin{align*}
        f^{\epsilon_j(\KUp)^0(X)}((1+p)^{1-n}-1) &\sim_p \left|\pi_{2(n-1)-1} L_{K(1)}DX\right|
    \end{align*}
    for all $n \equiv 1-j \mod{p-1}$ and $n \notin (1-b,1-a]$.
\end{lemma}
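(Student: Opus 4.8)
The plan is to reduce to the case of wedges of even spheres, where everything is explicit from the warm-up material in \cref{KUp of the Spheres}, and then to compare $p$-adic valuations degree-by-degree.

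First I would use the hypotheses on $X$: since $H_*(X;\mathbb{Z}_{(p)})$ is torsion-free and concentrated in even degrees, the Atiyah--Hirzebruch spectral sequence for $(\KUp)_*(X)$ collapses (by the Dold argument already invoked in the proof of \cref{pseudoequiv thm}), so $(\KUp)^0(X)$ is a free $\Zp$-module with one basis element for each even homology class, and it is rationally insensitive to the attaching maps of $X$. More precisely, by the rationalization arguments in \eqref{iso 0901}--\eqref{iso 0909} and the fact that $H\mathbb{Q}\wedge DX$ splits as a wedge of suspensions of $H\mathbb{Q}$, the $\Gamma'$-action on $(\KUp)^0(X)$ agrees, after tensoring with $\mathbb{Q}$, with the $\Gamma'$-action on $(\KUp)^0(W)$ where $W = \bigvee_i S^{2i}$ has one $S^{2i}$ summand for each copy of $\mathbb{Z}_{(p)}$ in $H_{2i}(X;\mathbb{Z}_{(p)})$. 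Since $(\KUp)^0(X)$ is $p$-torsion-free it embeds in its rationalization, so the matrix of $1+T$ acting on $\epsilon_j(\KUp)^0(X)$ is rationally conjugate to the corresponding block-diagonal matrix for $W$. Hence the characteristic polynomial factors as
\begin{equation*}
 f^{\epsilon_j(\KUp)^0(X)}(T) = \prod_{i\equiv j \bmod p-1} f_{i,j}(T)^{r_i} = \prod_{i\equiv j \bmod p-1}\bigl(T-(1+p)^i+1\bigr)^{r_i},
\end{equation*}
where $r_i = \dim_{\mathbb{Q}} H_{2i}(X;\mathbb{Q})$, using the computation of $f_{i,j}$ from \cref{KUp of the Spheres}.

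Next I would evaluate at $T = (1+p)^{1-n}-1$ and use additivity of $p$-adic valuation over products: $\nu_p\bigl(f^{\epsilon_j(\KUp)^0(X)}((1+p)^{1-n}-1)\bigr) = \sum_{i\equiv j} r_i\,\nu_p\bigl((1+p)^{1-n}-(1+p)^i\bigr)$, and $(1+p)^{1-n}-(1+p)^i = (1+p)^{1-n}\bigl(1-(1+p)^{n+i-1}\bigr)$, so each term is $\sim_p (1+p)^{n+i-1}-1 \sim_p (n+i-1)p$ by the lemma computing $\nu_p((1+p)^m-1)$ (the commented-out \texttt{Input p-adic valuation} lemma, which I would restate). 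On the topological side, $(\KUp)^*(DX)$ is dual to $(\KUp)^*(X)$, so $DX$ is pseudo-equivalent (indeed, up to rational type, equivalent) to $\bigvee_i S^{-2i}$ with the same multiplicities $r_i$; then $L_{K(1)}DX \simeq \bigvee_i \Sigma^{-2i} L_{K(1)}S^0$ after rationalizing the relevant modules, and the order of $\pi_{2(n-1)-1}L_{K(1)}DX$ is governed, via the classical computation of $\pi_* L_{K(1)}S^0$ recalled after \cref{SIMC for Spheres} and the splitting in \cref{SIMC for Spheres}, exactly by the same data. The restriction $n\notin(1-b,1-a]$ is precisely what guarantees that no homotopy-group order in the relevant degree picks up a contribution from the exceptional groups $\pi_{-1}L_{K(1)}S^0 \cong \pi_0 L_{K(1)}S^0 \cong \Zp$ sitting in a way that would spoil the degreewise comparison; outside that window, every contributing summand $\Sigma^{-2i}L_{K(1)}S^0$ contributes to $\pi_{2(n-1)-1}$ only through its genuinely finite homotopy, matching the $(n+i-1)p$ valuation term.

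The main obstacle is bookkeeping, not depth: I need to make the identification ``$DX$ rationally looks like a wedge of spheres, and $L_{K(1)}D X$ therefore has the homotopy-group orders of the corresponding wedge of $\Sigma^{-2i}L_{K(1)}S^0$'' fully rigorous despite the fact that $DX$ itself need not split. The clean way to do this is to observe that $\epsilon_j(\KUp)^{-1}(X)=0$ (as $(\KUp)^*(X)$ is concentrated in even degrees), run the $K(1)$-local $KU_p^\wedge$-based Adams (descent) spectral sequence $H^s_{\mathrm{cts}}(\Zpx; (\KUp)^t(DX)) \Rightarrow \pi_{t-s}L_{K(1)}DX$, which for $DX$ with torsion-free even $K$-theory degenerates to a short exact sequence computing $\pi_* L_{K(1)}DX$ from $H^0$ and $H^1$ of the single Iwasawa module $(\KUp)^0(DX)$; and $H^1_{\mathrm{cts}}(\Zpx;\Lambda/f(T))$ has order $\sim_p f((1+p)^{1-n}-1)$ in the appropriate degree by the standard computation of continuous cohomology of a procyclic group acting on a cyclic module (this is the same mechanism underlying \cref{SIMC for Spheres}). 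Assembling the eigenspace contributions and matching degrees then yields the claimed equivalence; the side conditions on $n$ are exactly the requirement that the $H^0$-contribution (the ``$\Zp$'' pieces) does not land in the target degree.
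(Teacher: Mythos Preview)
Your approach is correct but takes a genuinely different route from the paper in its second half. Both you and the paper begin identically: factor the characteristic polynomial as $\prod_{i\equiv j} f_{i,j}(T)^{r_i}$ via the rationalization argument (this is essentially Theorem~\ref{iwasawa invs and rational type}). The divergence is in how one passes to $|\pi_{2(n-1)-1}L_{K(1)}DX|$. The paper uses the Atiyah--Hirzebruch spectral sequence $H^s(X;\pi_{-t}L_{K(1)}S^0)\Rightarrow\pi_{-s-t}(DX\wedge L_{K(1)}S^0)$: Proposition~\ref{SIMC for Spheres} matches each factor $f_{i,j}((1+p)^{1-n}-1)^{r_i}$ with $|E_2^{2i,1-2(n+i-1)}|$, and the condition $n\notin(1-b,1-a]$ is precisely what guarantees that these entries lie outside the range where differentials from the $t=0$ row can hit them, so that the product of $E_2$-orders equals the abutment order. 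You instead invoke the descent (homotopy-fixed-point) spectral sequence $H^s_{\mathrm{cts}}(\Zpx;(\KUp)_t(DX))\Rightarrow\pi_{t-s}L_{K(1)}DX$, which is two-row and collapses. Your explanation of the range restriction is slightly off: in your framework there is \emph{no} $H^0$ contribution to odd-degree homotopy, since $(\KUp)_*(DX)$ is concentrated in even degrees; rather, $\pi_{2(n-1)-1}$ is purely an $H^1$, and its order is the $p$-part of $\det((1+p)^{n-1}A-I)$ on the $\Zp$-free module $\epsilon_j(\KUp)^0(X)$, which equals the characteristic-polynomial value for \emph{every} $n$. So your method, done carefully, actually proves the statement without the restriction on $n$ (using the paper's convention $0\sim_p\infty$). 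The paper's AHSS route buys a proof using only classical tools and a direct citation of Proposition~\ref{SIMC for Spheres}; your route buys a cleaner, more Iwasawa-theoretic argument and a marginally stronger conclusion, but you should replace the hand-wave ``$DX$ rationally a wedge of spheres'' by the direct linear-algebra fact that for a $\Zp$-free $\Lambda$-module the order of $H^1$ in each twist is governed by the determinant, hence by the Iwasawa characteristic polynomial alone.
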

\begin{proof}
    To prove this, we will need two ingredients: \Cref{SIMC for Spheres} and the Atiyah-Hirzebruch spectral sequence 
    \begin{align}
\label{ahss 10}    E_2^{s,t} \cong H^s(X, \pi_{-t}L_{K(1)}S^0) &\implies \pi_{-s-t}(DX \wedge L_{K(1)}S^0) \\
\nonumber    d_r:E_r^{s,t} &\to E_r^{s+r,t-r+1}.
    \end{align}
    
    As a consequence of \Cref{SIMC for Spheres}, we can relate the special values $f^{\epsilon_j(\KUp)^0(X)}((1+p)^{1-n}-1)$ to the product of the orders of certain bidegrees in the $E_2$-term of \eqref{ahss 10}, as follows. Since $X$ is assumed to have torsion-free $p$-local homology, the universal coefficient theorem yields 
    \begin{align*}
        H^{2i}(X; \pi_{-t}L_{K(1)}S^0) &\cong H_{2i}(X; \pi_{-t}L_{K(1)}S^0) \\
        &\cong (\pi_{-t}L_{K(1)}S^0)^{r_i},
    \end{align*}
    where $r_i = \operatorname{rank}(H_{2i}(X;\Z_{(p)}))$. So,  for each factor $f_{i,j}(T)^{r_i}$ of $f^{\epsilon_j(\KUp)^0(X)}(T)$ and each integer $n$ congruent to $1-j$ modulo $p-1$, we have
    \begin{align*}
        f_{i,j}((1+p)^{1-n}-1)^{r_i} &\sim_p \left|\pi_{2(n+i-1)-1}L_{K(1)}S^0\right|^{r_i} \\
        & \sim_p \left|E_2^{2i,1-2(n+i-1)}\right|
    \end{align*}
    Hence,
    \begin{align} \label{special values and E_2 page}
        f^{\epsilon_j(\KUp)^0(X)}((1+p)^{1-n}-1) &\sim_p \prod_{i \equiv j (p-1)} \left|E_2^{2i,1-2(n+i-1)}\right|
    \end{align}

    We still need to show that the product on the right-hand side of \eqref{special values and E_2 page} is, in fact, $\left|\pi_{2(n-1)-1} L_{K(1)}DX\right|$ when $n \notin (1-b,1-a]$.
    Because $H_*(X; \Z_{(p)})$ is concentrated in even degrees and because $E_2^{s,t} \cong H_s(X; \pi_{-t}L_{K(1)}S^0)$, the $E_2$-term of \eqref{ahss 10} is concentrated in bidegrees in which $s$ is even and $t$ is either $0$ or of the form $1-2m(p-1)$ for $m \in \Z$. For degree reasons, this means that the domain or codomain of any nonzero differential must be on the $t=0$ line. This, coupled with the fact that $H_*(X; \Z_{(p)})$ is concentrated between degrees $2a$ and $2b$, gives us that $E_2^{s,t}$ will not be hit by a nonzero differential when $s+t<2a$ or $s+t>2b$. Thus, when $m < 2a$ or $m >2b$,
    \begin{align*}
        \left| \pi_{-m}(DX \wedge L_{K(1)}S^0) \right| &= \prod_{s+t=m} \left| E_2^{s,t} \right|
    \end{align*}
    A quick calculation shows that, more specifically, when $n \equiv 1-j \mod{p-1}$ and $n \notin (1-b,1-a]$,
    \begin{align} \label{abutment of AHSS and E_2 page}
        \left| \pi_{2(n-1)-1} (DX \wedge L_{K(1)}S^0) \right| &= \prod_{i \equiv j (p-1)} \left| E_2^{2i,1-2(n+i-1)} \right|
    \end{align}

Finally, we need to know that $DX \wedge L_{K(1)}S^0$ is weakly equivalent to $L_{K(1)}DX$. This is, of course, not true for {\em arbitrary} spectra $X$, since $K(1)$-localization is not smashing. However, 
\begin{itemize}
\item we have a natural transformation $\xi: -\wedge L_{K(1)}S^0\rightarrow L_{K(1)}-$, 
\item $\xi$ is a weak equivalence when evaluated on a sphere,
\item and both the domain and codomain of $\xi$ are functors which send homotopy cofiber sequences to homotopy cofiber sequences.
\end{itemize}
Consequently $\xi$ is a weak equivalence when evaluated on any {\em finite} spectrum. Hence, since $X$ is assumed finite, \eqref{special values and E_2 page} and \eqref{abutment of AHSS and E_2 page} give us
    \begin{align*}
        f^{\epsilon_j(\KUp)^0(X)}((1+p)^{1-n}-1) &\sim_p \left|\pi_{2(n-1)-1} L_{K(1)}DX\right|.
    \end{align*}   
\end{proof}

We are now in a position to prove that, for a finite spectrum $X$, the characteristic polynomials of the $2p-2$ Iwasawa modules
$\epsilon_0(\KUp)^0(X), \dots ,\epsilon_{p-2}(\KUp)^0(X),\epsilon_0(\KUp)^{-1}(X), \dots ,\epsilon_{p-2}(\KUp)^{-1}(X)$ of $X$ determine the orders of the $K(1)$-local homotopy groups of the torsion-free replacement $\overset{\circ}{X}$ of $X$. This is the content of Theorem \ref{imc 2}. In particular, in the special case that $X$ already has $p$-local homology which is torsion-free and concentrated in even degrees---like a complex projective space---Theorem \ref{imc 2} describes the orders of the $K(1)$-local homotopy groups of $X$ itself.
\begin{theorem}[Weak form of Iwasawa Main Conjecture for spectra] \label{imc 2}
    Let $X$ be a finite spectrum.  Let $\alpha$ and $\beta$ be the least and greatest integers $i$, respectively, such that $H_{i}(X;\mathbb{Q})$ is nontrivial. Then we have
\begin{align*}
 \left|\pi_{2m-1}L_{K(1)}D\overset{\circ}{X}\right| 
  &\sim_p f^{\epsilon_{-m}(\KUp)^0(X)}((1+p)^{-m}-1) \mbox{\ \ and}\\
 \left|\pi_{2m}L_{K(1)}D\overset{\circ}{X}\right| 
  &\sim_p f^{\epsilon_{-m}(\KUp)^{-1}(X)}((1+p)^{-m}-1) 
\end{align*}
for all integers $m$ satisfying $m < \frac{-\beta}{2}$, and also for all integers $m$ satisfying $m > \frac{-\alpha}{2}$.
\end{theorem}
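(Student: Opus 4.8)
The plan is to reduce everything to the torsion-free replacement $\overset{\circ}{X}=(X_{even})_{torsfree}\vee (X_{odd})_{torsfree}$ of Theorem~\ref{pseudoequiv thm}, and then feed its two wedge summands separately into Lemma~\ref{SIMC}. First I would note that, since the characteristic polynomial of an Iwasawa module is a pseudo-isomorphism invariant and $X$ is pseudo-equivalent to $\overset{\circ}{X}$, we have $f^{\epsilon_j(\KUp)^i(X)}=f^{\epsilon_j(\KUp)^i(\overset{\circ}{X})}$ for all $i,j$; and since a pseudo-equivalence of finite spectra induces an isomorphism in rational homotopy (second part of Lemma~\ref{pseudoequivs lemma 1}), $X$ and $\overset{\circ}{X}$ have the same rational homology, so $\alpha$ and $\beta$ are unchanged. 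Hence it suffices to prove both displayed equivalences with $X$ replaced by $\overset{\circ}{X}$.

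Next I would split everything along the wedge decomposition. We have $D\overset{\circ}{X}=D(X_{even})_{torsfree}\vee D(X_{odd})_{torsfree}$, and since $L_{K(1)}$ is exact it commutes with this finite wedge, so $\left|\pi_n L_{K(1)}D\overset{\circ}{X}\right|=\left|\pi_n L_{K(1)}D(X_{even})_{torsfree}\right|\cdot\left|\pi_n L_{K(1)}D(X_{odd})_{torsfree}\right|$ for every $n$. Because both summands have torsion-free $p$-local homology concentrated in a single parity, their Atiyah--Hirzebruch spectral sequences in $p$-adic $K$-theory collapse, giving $\epsilon_j(\KUp)^0(\overset{\circ}{X})\cong\epsilon_j(\KUp)^0((X_{even})_{torsfree})$ and $\epsilon_j(\KUp)^{-1}(\overset{\circ}{X})\cong\epsilon_j(\KUp)^{-1}((X_{odd})_{torsfree})$. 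Now $(X_{even})_{torsfree}$ satisfies the hypotheses of Lemma~\ref{SIMC}, and the substitution $n=m+1$ (which turns $(1+p)^{1-n}-1$ into $(1+p)^{-m}-1$, turns the abutment degree $2(n-1)-1$ into $2m-1$, and turns $n\equiv 1-j$ into $j\equiv -m$) converts that lemma into
\[ \left|\pi_{2m-1}L_{K(1)}D(X_{even})_{torsfree}\right|\sim_p f^{\epsilon_{-m}(\KUp)^0(X)}\big((1+p)^{-m}-1\big), \]
valid for all $m$ outside the finite range excluded by Lemma~\ref{SIMC} (which is governed by the degrees in which $(X_{even})_{torsfree}$ has homology, hence by $\alpha,\beta$). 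For the odd summand I would instead apply Lemma~\ref{SIMC} to $\Sigma(X_{odd})_{torsfree}$, whose $p$-local homology is torsion-free and concentrated in even degrees; combining $(\KUp)^0(\Sigma Y)\cong (\KUp)^{-1}(Y)$ with $D\Sigma Y\simeq \Sigma^{-1}DY$ (so $\pi_{2(n-1)-1}L_{K(1)}D\Sigma Y\cong \pi_{2(n-1)}L_{K(1)}DY$) gives
\[ \left|\pi_{2m}L_{K(1)}D(X_{odd})_{torsfree}\right|\sim_p f^{\epsilon_{-m}(\KUp)^{-1}(X)}\big((1+p)^{-m}-1\big) \]
for all $m$ outside a finite range.

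The remaining step, which I expect to be the main obstacle, is to show that in the stated range the ``cross term'' contributes nothing: $\pi_{2m-1}L_{K(1)}D(X_{odd})_{torsfree}$ must vanish for the first formula, and $\pi_{2m}L_{K(1)}D(X_{even})_{torsfree}$ for the second. The key point is that the off-parity $K(1)$-local homotopy of each summand is detected only by the $t=0$ line $H_*(-;\Zp)$ of the Atiyah--Hirzebruch spectral sequence for $L_{K(1)}S^0$ (the torsion in $\pi_*L_{K(1)}S^0$ lives in the opposite parity), and that line vanishes outside the homology range of the summand. Concretely, $D(X_{even})_{torsfree}$ has torsion-free homology concentrated in even degrees inside $[-\beta,-\alpha]$, so $\pi_{2m}L_{K(1)}D(X_{even})_{torsfree}=0$ as soon as $2m\notin[-\beta,-\alpha]$, i.e.\ exactly when $m<-\beta/2$ or $m>-\alpha/2$; symmetrically $D(X_{odd})_{torsfree}$ has homology concentrated in odd degrees inside $[-\beta,-\alpha]$, so $\pi_{2m-1}L_{K(1)}D(X_{odd})_{torsfree}$ vanishes once $2m-1$ is outside that interval. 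One then checks that $m<-\beta/2$ or $m>-\alpha/2$ forces both the relevant cross-term vanishing and the exclusion ranges from the two applications of Lemma~\ref{SIMC}, and assembles the two formulas from the wedge splittings above. The delicate part is exactly this comparison of the finitely many boundary values of $m$ near $-\alpha/2$ and $-\beta/2$, together with pinning down which homology degrees of $(X_{even})_{torsfree}$ and $(X_{odd})_{torsfree}$ can actually occur given only $\alpha$ and $\beta$ — and, where a homology class is present in a boundary degree, tracking that both sides acquire a copy of $\Zp$ so that the equivalence persists under the convention $\nu_p(|\Zp|)=\infty$.
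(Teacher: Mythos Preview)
Your proposal is correct and follows exactly the route the paper intends: the paper's own proof is the single line ``Corollary of Theorem~\ref{pseudoequiv thm} and Lemma~\ref{SIMC},'' and what you have written is precisely the unpacking of that corollary---pass to $\overset{\circ}{X}$ via pseudo-equivalence, split into even and odd wedge summands, apply Lemma~\ref{SIMC} to each (desuspending the odd part), and handle the cross-terms and boundary range via the Atiyah--Hirzebruch spectral sequence and the $\nu_p(|\Zp|)=\infty$ convention. Your identification of the boundary bookkeeping near $m=-\alpha/2$ and $m=-\beta/2$ as the only delicate point is accurate, and the paper leaves exactly that bookkeeping implicit.
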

\begin{proof}
Corollary of Theorem \ref{pseudoequiv thm} and Lemma \ref{SIMC}.
\end{proof}
To be clear, in the statement of Theorem \ref{imc 2}, the subscript $-m$ in $\epsilon_{-m}$ is to be understood as being defined modulo $p-1$.

\begin{remark}
Here is a comment on the prospects for a {\em strong} form of the Iwasawa Main Conjecture for spectra, i.e., a theorem to the effect that the {\em algebraic} $p$-adic $L$-function given simply by the characteristic polynomial of $\epsilon_j(\KUp)^i(X)$ generates the same ideal in $\Zp[[T]]$ as some {\em analytic} $p$-adic $L$-function, presumably constructed by $p$-adically interpolating some sequence of special values of some complex-analytic $L$-function. The paper \cite{salch2023kulocal} constructed, for each finite spectrum $X$ with torsion-free homology concentrated in even degrees, a ``provisional $KU$-local zeta-function'' $\dot{\zeta}_{KU}(s,X)$. The function $\dot{\zeta}_{KU}(s,X)$ is a meromorphic function on the complex plane, and the denominators of its special values in a left-hand half-plane are proven in \cite{salch2023kulocal} to coincide with the orders of the $KU$-local stable homotopy groups of $X$. 

All the zeta-functions and $L$-functions considered in \cite{salch2023kulocal} are products of Tate twists of $L$-functions of primitive Dirichlet characters. Consequently, by Kubota--Leopoldt \cite{MR0163900}, after removing the Euler factors at $p$, the special values at negative integers can be $p$-adically interpolated to yield a $p$-adic $L$-function. One might hope that this $p$-adic $L$-function is the correct ``analytic side'' of a spectral Iwasawa Main Conjecture. We do not expect this to be true in full generality, but we find it plausible for odd {\em regular} primes $p$. This is because, given an odd regular prime $p$ and a finite spectrum $X$ with torsion-free homology concentrated in even degrees, the $p$-adic valuations of the special values of the {\em algebraic} $p$-adic $L$-function to $X$ are equal to $p$-adic valuations of the special values of the {\em analytic} $p$-adic $L$-function obtained by $p$-adic interpolation of $\dot{\zeta}_{KU}(s,X)$, as a consequence of \cite[Theorem 2.8]{salch2023kulocal} and Theorem \ref{imc 2}, above. 
\end{remark}

\section{The Iwasawa $\lambda$-invariants of finite spectra.}
\label{lambda invariant controls growth of K(1)-local homotopy groups}

In \cref{The Classical Iwasawa Main Conjecture}, we explained the precise sense in which the $\lambda$-invariant and $\mu$-invariant control the asymptotic growth rate of the $p$-part of the class number, as one moves up a suitable tower of number fields. 

Consider topological analogues of these results. We have already shown in Theorem \ref{mu-invs vanish} that the $\mu$-invariants of finite spectra are all trivial, so we focus on the question of what numerical invariant of a finite spectrum $X$ has its asymptotic growth rate described by the $\lambda$-invariants of the $2(p-1)$-tuple of Iwasawa modules \[ \epsilon_0(\KUp)^0(X), \epsilon_1(\KUp)^0(X), \dots, \epsilon_{p-2}(\KUp)^0(X),\ \epsilon_0(\KUp)^{-1}(X), \epsilon_1(\KUp)^{-1}(X), \dots, \epsilon_{p-2}(\KUp)^{-1}(X)\] associated to $X$ in \cref{tuples section}. In this section we prove that, just as the $\lambda$-invariant of the Iwasawa module of a $\hat{\mathbb{Z}}_p$-extension of number fields controls the asymptotic growth rate of the $p$-parts of the class numbers, {\em the $\lambda$-invariants of the Iwasawa modules of a finite spectrum $X$ control the asymptotic growth rate of the graded average order of the $K(1)$-local homotopy groups of $X$.} 

Our method of proof is centered on using additive invariants and algebraic $K_0$ of the category of finite spectra, since this approach is one that can potentially be generalized to higher-height analogues, using noncommutative Iwasawa theory, in the sense of \cite{MR1924402} and \cite{MR2217048}. The authors hope to pursue this idea in later work.

\subsection{Asymptotic graded averages.}
\label{Asymptotic graded averages.}

In this section we will study the average values of the orders of the $K(1)$-local homotopy groups of finite spectra. Given a finite spectrum $X$, we would like to consider the average of the first $n$ terms in the sequence
\begin{equation}\label{seq 19344} \left|\pi_{1}L_{K(1)}X\right|, \left|\pi_{2}L_{K(1)}X\right|, \left|\pi_{3}L_{K(1)}X\right|, \dots\end{equation}
However, the average $\frac{1}{n} \sum_{j=1}^n \left|\pi_{j}L_{K(1)}X\right|$ is not necessarily defined: the trouble is that the sequence \eqref{seq 19344} may include finitely many terms which are infinite. Since we are interested in asymptotics, it is harmless to skip the first $m$ terms, and consider the average $\frac{1}{n} \sum_{j=1+m}^{n+m} \left|\pi_{j}L_{K(1)}X\right|$ for $m>>0$. 

There is another point that needs explaining. It will be extremely convenient to arrange for our asymptotic growth rates to be {\em additive} invariants of spectra, i.e., if $X \rightarrow Y \rightarrow Z$ is a cofiber sequence of spectra, then the sum of the growth rates for $X$ and $Z$ ought to be the growth rate for $Y$. In the special case $Y = 0$, we see that the growth rate for $\Sigma X$ must then somehow be equal to $-1$ times the growth rate for $X$. The way to arrange this is to study the asymptotics of the {\em alternating} sum \[ \frac{1}{n}\sum_{j={1+m}}^{n+m} (-1)^j \left| \pi_jL_{K(1)}X\right| \] as $n\rightarrow\infty$, for $m>>0$. The heuristic is that we are studying the asymptotic growth rate of the average order of the $K(1)$-local stable homotopy groups of $X$, but ``average'' must be understood as a {\em graded} average, in which the even-dimensional elements are counted as positive, while the odd-dimensional elements are counted as negative.

It turns out that the graded average order of $n$ successive $K(1)$-local homotopy groups of a finite spectrum $X$ grows like $\log_p(n)/2$ times a constant. Furthermore, it turns out that this constant is determined by the $\lambda$-invariants of the Iwasawa modules of $X$. The main result establishing these facts is Theorem \ref{lambda and euler char}.

\begin{definition}
Let $\alpha$ be a nonzero real number. Given a bi-infinite sequence $\dots , k_{-1}, k_0, k_1, k_2, \dots $, we will say that its {\em graded average grows like $\alpha\cdot \log_p(n)$} if the limit 
\[ \lim_{m\rightarrow\infty} \lim_{n\rightarrow\infty} \frac{\frac{1}{n}\sum_{j=1+m}^{n+m} (-1)^j k_j}{\alpha\cdot \log_p(n)}\]
exists and is equal to $1$.
\end{definition}

\begin{proposition}\label{additivity of sn}
Let $\mathcal{C}$ be a triangulated category. Fix an object $S$ of $\mathcal{C}$, and write $\pi_i$ for the functor $[\Sigma^i S,-]: \mathcal{C}\rightarrow\Ab$. 
Make the following assumptions:
\begin{itemize} 
\item $X \rightarrow Y \rightarrow Z$ in $\mathcal{C}$ is a cofiber sequence in $\mathcal{C}$.
\item $\alpha_X,\alpha_Z$ are real numbers such that the graded average of $\dots ,\left|\pi_{-1}(X)\right|, \left|\pi_0(X)\right|, \left|\pi_1(X)\right|,\dots$ grows like $\alpha_X\log_p(n)$, and the graded average of $\dots ,\left|\pi_{-1}(Z)\right|, \left|\pi_0(Z)\right|, \left|\pi_1(Z)\right|,\dots$ grows like $\alpha_Z\log_p(n)$.
\item The limit $\lim_{n\rightarrow\infty} \frac{\left|\pi_n(X)\right|}{n\log_p(n)}$ exists and is equal to zero. 
\end{itemize}
Then the graded average of $\dots ,\left|\pi_{-1}(Y)\right|, \left|\pi_0(Y)\right|, \left|\pi_1(Y)\right|,\dots$ grows like $(\alpha_X+\alpha_Z)\log_p(n)$.
\end{proposition}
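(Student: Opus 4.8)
The plan is to run the long exact sequence of the cofiber sequence $X\to Y\to Z$ through the functors $\pi_j=[\Sigma^jS,-]$ and compare the three alternating sums a degree at a time. For $m$ large enough all of $\pi_j(X),\pi_j(Y),\pi_j(Z)$ with $j\ge 1+m$ are finite: the growth hypotheses on $X$ and $Z$ force $|\pi_j(X)|,|\pi_j(Z)|<\infty$ for $j\gg 0$ (otherwise the inner limits defining their graded averages would not be finite), and then finiteness of $|\pi_j(Y)|$ follows from the long exact sequence. So assume $m$ is this large, so that the orders below are honest positive integers. The long exact sequence
\[\cdots\to\pi_j(X)\xrightarrow{f_j}\pi_j(Y)\xrightarrow{g_j}\pi_j(Z)\xrightarrow{\partial_j}\pi_{j-1}(X)\to\cdots\]
splits into short exact sequences $0\to\operatorname{im}\partial_{j+1}\to\pi_j(X)\to\operatorname{coker}\partial_{j+1}\to 0$, $\;0\to\operatorname{coker}\partial_{j+1}\to\pi_j(Y)\to\ker\partial_j\to 0$, and $0\to\ker\partial_j\to\pi_j(Z)\to\operatorname{im}\partial_j\to 0$. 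Writing $c_j:=|\operatorname{im}(\partial_j\colon\pi_j(Z)\to\pi_{j-1}(X))|$ and using multiplicativity of orders along short exact sequences, these three combine into the single identity $|\pi_j(X)|\cdot|\pi_j(Z)|=|\pi_j(Y)|\cdot c_j\cdot c_{j+1}$.

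I would then pass to the additive form of this identity governing the graded average, i.e.\ take logarithms: $\log|\pi_j(Y)|=\log|\pi_j(X)|+\log|\pi_j(Z)|-\log c_j-\log c_{j+1}$. Multiplying by $(-1)^j$ and summing over $j=1+m,\dots,n+m$, the connecting-homomorphism contributions telescope,
\[\sum_{j=1+m}^{n+m}(-1)^j\big(\log c_j+\log c_{j+1}\big)=(-1)^{1+m}\log c_{1+m}+(-1)^{n+m}\log c_{n+1+m},\]
so the alternating sum for $Y$ equals the sum of those for $X$ and $Z$ up to the two boundary terms $\log c_{1+m}$ and $\log c_{n+1+m}$. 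This reduces the proposition to showing the boundary terms are negligible after dividing by $n$ and passing to $\lim_m\lim_n$, and this is exactly where the third hypothesis enters: since $c_{n+1+m}\le|\pi_{n+m}(X)|=o\big((n+m)\log_p(n+m)\big)$ we get $\tfrac1n\log c_{n+1+m}=O\big(\tfrac{\log_p n}{n}\big)\to 0$ as $n\to\infty$, while $\tfrac1n\log c_{1+m}\to 0$ trivially for fixed $m$. Feeding in the two growth hypotheses on $X$ and $Z$ and dividing by $(\alpha_X+\alpha_Z)\log_p n$ then gives that the graded average of $Y$ grows like $(\alpha_X+\alpha_Z)\log_p(n)$.

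The homological bookkeeping and the telescoping are routine; the genuine point to get right is the boundary-term estimate, namely verifying that the third hypothesis is strong enough to absorb $\log c_{n+1+m}$ uniformly enough to survive the iterated limit $\lim_m\lim_n$. This is also the natural place to record the degenerate case: the definition of ``grows like $\alpha\log_p(n)$'' requires $\alpha\neq0$, so if $\alpha_X+\alpha_Z=0$ one should instead read the conclusion as saying the same computation forces the graded average of $Y$ to be $o(\log_p n)$.
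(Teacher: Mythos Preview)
Your argument has a genuine gap at the step ``pass to the additive form \dots\ take logarithms.'' The paper's definition of ``graded average grows like $\alpha\log_p(n)$'' is stated for the alternating sum $\frac{1}{n}\sum_j(-1)^j k_j$ with $k_j=|\pi_j|$ the \emph{order itself}, so the hypotheses on $X$ and $Z$ give you control of $\sum_j(-1)^j|\pi_j(X)|$ and $\sum_j(-1)^j|\pi_j(Z)|$, not of $\sum_j(-1)^j\log|\pi_j(X)|$ and $\sum_j(-1)^j\log|\pi_j(Z)|$. Your telescoping identity---which is perfectly correct---relates the \emph{log}-sums, so there is nothing to feed in at the last step. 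The multiplicative relation $|\pi_j(X)|\,|\pi_j(Z)|=|\pi_j(Y)|\,c_jc_{j+1}$ simply has no additive analogue for bare cardinalities.

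That said, the paper's own proof makes exactly this slip in undisguised form: it writes $|\pi_j(Y)|=|\operatorname{im}f_j|+|\operatorname{im}g_j|$ and then $|\pi_j(X)|+|\pi_j(Z)|-|\operatorname{im}h_{j+1}|-|\operatorname{im}h_j|$, treating orders additively across short exact sequences where they are in fact multiplicative. Indeed the proposition as literally stated is false: take the split cofiber sequence $S^0\to S^0\vee S^0\to S^0$ in the $K(1)$-local category (all three hypotheses hold, as the paper itself verifies for $S^0$). Then $|\pi_j(S^0\vee S^0)|=|\pi_j(S^0)|^2$, and a direct computation along the lines of the paper's $s_n$-calculation shows $\frac{1}{N}\sum_{j=1}^N(-1)^j|\pi_j(S^0)|^2$ grows linearly in $N$, not like $\log_p N$. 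Your approach is the correct one for the \emph{intended} proposition, in which $|\pi_j|$ is replaced throughout by $\log_p|\pi_j|$; with that correction to the statement, your telescoping and boundary-term estimates go through exactly as written.
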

\begin{proof}
We have the long exact sequence
\[  \dots \stackrel{g_{n+1}}{\longrightarrow} \pi_{n+1}(Z) \stackrel{h_{n+1}}{\longrightarrow} \pi_n(X) \stackrel{f_n}{\longrightarrow} \pi_n(Y) \stackrel{g_n}{\longrightarrow} \pi_{n}(Z) \stackrel{h_{n}}{\longrightarrow} \pi_{n-1}(X)\stackrel{f_{n-1}}{\longrightarrow}\dots\]
and consequently equalities
\begin{align}
\label{eq 291} \frac{1}{n(\alpha_X + \alpha_Z)\log_p(n)} \sum_{j=1+m}^{n+m} (-1)^j \left| \pi_j(Y)\right| 
  &= \frac{1}{n(\alpha_X + \alpha_Z)\log_p(n)} \sum_{j=1+m}^{n+m} (-1)^j \left( \left| \im f_n\right| + \left| \im g_n\right|\right)\\
\nonumber  &= \frac{1}{n(\alpha_X + \alpha_Z)\log_p(n)} \sum_{j=1+m}^{n+m} (-1)^j \left( \left| \pi_j(X)\right| + \left| \pi_j(Z)\right| - \left| \im h_{j+1}\right| - \left| \im h_j\right|\right)\\
 \label{eq 293}  &= \frac{1}{n(\alpha_X + \alpha_Z)\log_p(n)} \left(  (-1)^m\left| \im h_{1+m}\right| - (-1)^{m+n}\left| \im h_{1+m+n}\right| + \sum_{j=1+m}^{n+m} (-1)^j \left( \left| \pi_j(X)\right| + \left| \pi_j(Z)\right|\right)\right).
\end{align}
Since $\im h_j$ is a subgroup of $\pi_{j-1}(X)$, the assumption that $\lim_{n\rightarrow \infty} \frac{\pi_n(X)}{n\log_p(n)} = 0$ is enough to ensure that applying $\lim_{m\rightarrow\infty}\lim_{n\rightarrow\infty}$ to \eqref{eq 293} yields
\[ \lim_{m\rightarrow\infty}\lim_{n\rightarrow\infty}\frac{1}{(\alpha_X + \alpha_Z)\log_p(n)} \sum_{j=1+m}^{n+m}\frac{1}{n} (-1)^j \left( \left| \pi_j(X)\right| + \left| \pi_j(Z)\right|\right),\]
i.e., $1$. Hence $\lim_{m\rightarrow\infty}\lim_{n\rightarrow\infty}$ applied to \eqref{eq 291} yields $1$, and hence the homotopy groups of $Y$ have graded average which grows like $(\alpha_X + \alpha_Z)\log_p(n)$.
\end{proof}
As a consequence of Proposition \ref{additivity of sn} and a simple algebraic-$K$-theoretic argument, we have:
\begin{theorem}\label{avging coeff thm}
Let $p$ be an odd prime, and let $X$ be a finite spectrum with Euler characteristic \linebreak $\chi(X) = \sum_n (-1)^n\dim_{\mathbb{Q}}H_n(X;\mathbb{Q})$. Then the 
graded average of the orders of $n$ consecutive $K(1)$-local homotopy groups of $X$ grows like $\frac{-\chi(X)}{2}\cdot \log_p(n)$.
\end{theorem}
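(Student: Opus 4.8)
The plan is to realize the graded-average growth rate as an \emph{additive} invariant of finite spectra and then pin it down on the single spectrum $S^0$ using algebraic $K_0$. Concretely, I would apply Proposition \ref{additivity of sn} with $\mathcal{C}$ the homotopy category of $K(1)$-local spectra and $S = L_{K(1)}S^0$, so that $\pi_i$ becomes $\pi_i L_{K(1)}(-)$ on finite spectra. The content of that proposition is that along a cofiber sequence $X \to Y \to Z$ the growth constants of $X$ and $Z$ add to that of $Y$, once its three auxiliary hypotheses are met. Granting this, the assignment $X \mapsto \alpha(X)$ — where $\alpha(X)$ is the real number for which the graded average of $\bigl(|\pi_j L_{K(1)}X|\bigr)_j$ grows like $\alpha(X)\log_p n$ — is additive along cofiber sequences and hence factors through the Grothendieck group $K_0$ of the triangulated category of finite spectra.

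Next I would invoke the classical identification of $K_0$ of the finite stable homotopy category with $\mathbb{Z}$, generated by $[S^0]$, under which $[X] = \chi(X)\cdot[S^0]$, the Euler characteristic being (up to scalar) the unique additive invariant of finite spectra. Together with the previous step this forces $\alpha(X) = \chi(X)\cdot\alpha(S^0)$ for every finite spectrum $X$, reducing the theorem to the single computation $\alpha(S^0) = -\tfrac12$. For that I would feed the classical description of $\pi_* L_{K(1)}S^0$ recalled just before Proposition \ref{SIMC for Spheres} (essentially \cite[Theorem 8.10]{MR737778}) into the defining sum: in a sufficiently late range the only nonzero terms of $\sum_{j=1+m}^{n+m}(-1)^j|\pi_j L_{K(1)}S^0|$ occur in degrees $j \equiv -1 \pmod{2(p-1)}$, where $|\pi_j L_{K(1)}S^0|$ is governed by $1 + \nu_p\!\bigl(\tfrac{j+1}{2(p-1)}\bigr)$. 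Reindexing by $m' = \tfrac{j+1}{2(p-1)}$ and applying the elementary estimate $\sum_{m'\le M}p^{\nu_p(m')} \sim \tfrac{p-1}{p}\,M\log_p M$ shows that the graded average of the $K(1)$-local homotopy of $S^0$ grows like $-\tfrac12\log_p n$, i.e.\ $\alpha(S^0) = -\tfrac{\chi(S^0)}{2}$, as needed.

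The main obstacle is making the $K_0$-reduction rigorous, since Proposition \ref{additivity of sn} is not purely formal: building an arbitrary finite spectrum $X$ from spheres through a finite tower $\ast = X_0 \to X_1 \to \cdots \to X_N = X$ of cofiber sequences with spherical cofibers, one must check at each stage that the growth constant $\alpha(X_k)$ exists (to be produced inductively, in tandem with the additivity) and that $\lim_{n}|\pi_n L_{K(1)}X_k|/(n\log_p n) = 0$. I would discharge the latter vanishing condition via the Atiyah--Hirzebruch spectral sequence for $L_{K(1)}X_k \simeq X_k \wedge L_{K(1)}S^0$ (using, as in the proof of Lemma \ref{SIMC}, that $-\wedge L_{K(1)}S^0 \to L_{K(1)}(-)$ is an equivalence on finite spectra), together with the sparseness and slow growth of $\pi_* L_{K(1)}S^0$, which keeps $|\pi_n L_{K(1)}X_k|$ far below $n\log_p n$. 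With the tower hypotheses in place, the base-case sphere estimate of the previous paragraph is then routine analytic number theory.
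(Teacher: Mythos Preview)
Your overall strategy---establish $\alpha(S^0)=-\tfrac12$ by direct estimation, then use Proposition~\ref{additivity of sn} inductively along a cell decomposition, and finally invoke $K_0$ of finite spectra to identify $\alpha$ with $-\tfrac12\chi$---is exactly the paper's. Your sketch of the $S^0$ computation via the asymptotic $\sum_{m'\le M} p^{\nu_p(m')}\sim\tfrac{p-1}{p}\,M\log_p M$ is a legitimate shortcut for what the paper does by an explicit squeeze between two subsequences.

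There is, however, a genuine error in your inductive step. You claim that the Atiyah--Hirzebruch spectral sequence, together with the slow growth of $\pi_* L_{K(1)}S^0$, keeps $\left|\pi_n L_{K(1)}X_k\right|$ far below $n\log_p n$ for every finite skeleton $X_k$. This is false already for $X_k=S^0\vee S^0$: since orders multiply across a wedge, $\left|\pi_{2(p-1)p^j-1}L_{K(1)}(S^0\vee S^0)\right|=p^{2(j+1)}$, whereas $n\log_p n\sim 2(p-1)p^j\cdot j$, and the ratio tends to $\infty$. The Atiyah--Hirzebruch bound on $\left|\pi_n\right|$ is \emph{multiplicative} in the cells, so it cannot yield an $o(n\log_p n)$ estimate once two cells contribute in the same degree. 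The repair is to notice that the only use of the vanishing hypothesis in the proof of Proposition~\ref{additivity of sn} is to control the boundary term $\left|\im h_{1+m+n}\right|$, and $\im h_{j+1}$ is a quotient of $\pi_{j+1}(Z)$ as well as a subgroup of $\pi_j(X)$; hence it suffices that \emph{either} $X$ or $Z$ satisfy the vanishing. Arranging the cell-attachment cofiber sequence as $X_{k-1}\to X_k\to S^{d_k}$, the third term is a sphere, and the paper's explicit check that $\left|\pi_n L_{K(1)}S^0\right|/(n\log_p n)\to 0$ then carries the induction. The paper's own proof is silent on this point, simply asserting ``by induction on cells,'' so you were right to flag it---only your proposed resolution needs correcting.
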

\begin{proof}
We first prove the claim in the case that $X = S^0$. Let $s_n$ denote the graded average $\frac{1}{2(p-1)p^n} \sum_{j=1}^{2(p-1)p^n} (-1)^j \left| \pi_j L_{K(1)}S^0\right|$. By elementary algebra and the calculation of the $K(1)$-local homotopy groups of spheres, we have
\begin{align*}
 s_n 
  &= \frac{1}{2(p-1)p^n}  \sum_{i=1}^{p^n}(1- p^{\nu_p(i)+1}) \\
  &= \frac{1}{2(p-1)p^n}  \left( p^n - p \sum_{i=1}^{p^n}p^{\nu_p(i)}\right) \\
  &= \frac{1}{2(p-1)p^n}  \left( p^n - p ((n+1)p^n - np^{n-1})\right) \\
  &= \frac{1}{2(p-1)p^n}  (n+1)(p^n - p^{n+1}) \\
  &= \frac{-1-n}{2}.
\end{align*}
Consider the sequence of nonpositive rational numbers
\begin{equation}\label{seq 610}  \frac{1}{2 \log_p(2)}\sum_{j=1}^2 (-1)^j\left| \pi_j(L_{K(1)}S^0) \right| ,\ \ 
 \frac{1}{3\log_p(3)}\sum_{j=1}^3 (-1)^j\left| \pi_j(L_{K(1)}S^0) \right| ,\ \ 
 \frac{1}{4\log_p(4)}\sum_{j=1}^4 (-1)^j\left| \pi_j(L_{K(1)}S^0) \right| ,\dots .\end{equation}
The least (i.e., highest absolute value) terms in \eqref{seq 610} are taken in the subsequence
\begin{equation}
\label{seq 611} 
\begin{multlined}\frac{1}{(2(p-1)p^0-1)\log_p(2(p-1)p^0-1)}\sum_{j=1}^{2(p-1)p^0-1} (-1)^j\left| \pi_j(L_{K(1)}S^0) \right| ,\\
\ \ \ \ \ \frac{1}{(2(p-1)p^1-1)\log_p(2(p-1)p^1-1)}\sum_{j=1}^{2(p-1)p^1-1} (-1)^j\left| \pi_j(L_{K(1)}S^0) \right|,\\
\ \ \ \ \ \frac{1}{(2(p-1)p^2-1)\log_p(2(p-1)p^2-1)}\sum_{j=1}^{2(p-1)p^2-1} (-1)^j\left| \pi_j(L_{K(1)}S^0) \right|,\\
\ \ \ \ \ \frac{1}{(2(p-1)p^3-1)\log_p(2(p-1)p^3-1)}\sum_{j=1}^{2(p-1)p^3-1} (-1)^j\left| \pi_j(L_{K(1)}S^0) \right|, \dots . \end{multlined}
 \end{equation}
Write $t_n$ for the $n$th term, counting from zero, in the sequence \eqref{seq 611}. Then we have
\begin{align*} t_n &= \frac{2(p-1)p^ns_n - 1}{(2(p-1)p^n-1)\log_p(2(p-1)p^n-1)},\end{align*}
and consequently
\begin{align*}
 \lim_{n\rightarrow\infty} t_n 
  &= \lim_{n\rightarrow\infty} \frac{(2(p-1)p^n-1)(-1-n)/2 + (-1-n)/2 - 1}{(2(p-1)p^n-1)\log_p(2(p-1)p^n-1)}\\
  &= \lim_{n\rightarrow\infty} \frac{(-1-n)/2}{\log_p(2(p-1)p^n-1)}\\
  &= \frac{-1}{2}.
\end{align*}
Consequently the graded average of the orders of $n$ consecutive $K(1)$-local homotopy groups of spheres grows no slower than $\frac{-\log_p(n)}{2}$.
To bound the growth from above, consider the subsequence of \eqref{seq 610} given by 
\begin{equation}\label{seq 612}
\begin{multlined} \frac{1}{(2(p-1)p^0-2)\log_p(2(p-1)p^0-2)}\sum_{j=1}^{2(p-1)p^0-2} (-1)^j\left| \pi_j(L_{K(1)}S^0) \right| ,\\
\frac{1}{(2(p-1)p^1-2)\log_p(2(p-1)p^1-2)}\sum_{j=1}^{2(p-1)p^1-2} (-1)^j\left| \pi_j(L_{K(1)}S^0) \right|,\\
\frac{1}{(2(p-1)p^2-2)\log_p(2(p-1)p^2-2)}\sum_{j=1}^{2(p-1)p^2-2} (-1)^j\left| \pi_j(L_{K(1)}S^0) \right|,\\
\frac{1}{(2(p-1)p^3-2)\log_p(2(p-1)p^3-2)}\sum_{j=1}^{2(p-1)p^3-2} (-1)^j\left| \pi_j(L_{K(1)}S^0) \right|, \dots 
 .\end{multlined}\end{equation}
The greatest (i.e., least absolute value) terms in \eqref{seq 610} are taken in the subsequence \eqref{seq 612}. Write $u_n$ for the $n$th term in \eqref{seq 612}, counting from zero.
Then we have 
\begin{align*} u_n &= \frac{2(p-1)p^ns_n - 1 + p^{n+1}}{(2(p-1)p^n-2)\log_p(2(p-1)p^n-2)},\end{align*}
and consequently
\begin{align*}
 \lim_{n\rightarrow\infty} u_n 
  &= \lim_{n\rightarrow\infty} \frac{(2(p-1)p^n-2)(-1-n)/2  - n - 2 + p^{n+1}}{(2(p-1)p^n-2)\log_p(2(p-1)p^n-2)}\\
  &= \lim_{n\rightarrow\infty} \frac{-n/2}{\log_p(2(p-1)p^n-2)}\\
  &= \frac{-1}{2}.
\end{align*}
Consequently \eqref{seq 610} converges to $-1/2$, i.e., the graded average of the orders of $n$ consecutive $K(1)$-local homotopy groups of spheres grows like $\frac{-\log_p(n)}{2}$.

In the sequence of positive rational numbers $\left|\pi_2(L_{K(1)}S^0)\right|/(2\log_p(2)), \left|\pi_3(L_{K(1)}S^0)\right|/(3\log_p(3)),\left|\pi_4(L_{K(1)}S^0)\right|/(4\log_p(4)) , \dots$, the highest values are taken at terms of the form 
\begin{align*}
 \frac{\left|\pi_{2p^n(p-1)-1}(L_{K(1)}S^0)\right|}{(2p^n(p-1)-1)\log_p(2p^n(p-1)-1)} &= \frac{p^{n+1}}{(2p^n(p-1)-1)\log_p(2p^n(p-1)-1)}. 
\end{align*}
As $n\rightarrow \infty$, even these highest values go to zero. Hence $\lim_{n\rightarrow\infty}\left|\pi_n(L_{K(1)}S^0)\right|/(n\log_p(n)) = 0$. By induction on cells using Proposition \ref{additivity of sn}, we have a well-defined {\em additive} invariant of finite spectra, given by sending a finite spectrum $X$ to the unique real number $\alpha(X)$ such that the graded average of the orders of $n$ consecutive $K(1)$-local homotopy groups of $X$ grows like $\alpha(X) \cdot\log_p(n)$. 

Since $X \mapsto \alpha(X)$ is an additive invariant of finite spectra, it depends only on the value of the {\em universal} additive invariant of finite spectra, i.e., the Euler characteristic. Hence we can calculate the value of $\alpha$ on any finite spectrum $X$ by multiplying the Euler characteristic $\chi(X)$ by the value of $\alpha$ on the generator $S^0$ of $K_0(\Fin\Sp)$. The calculation $\alpha(S^0) = -1/2$, above, yields the claim in the statement of the theorem.
\end{proof}

\subsection{The $\lambda$-invariants of a finite spectrum}
\label{Iwasawa invariants of finite spectra}

\begin{definition}\label{def of tot lambda}
Let $X$ be a finite spectrum.  
By $\lambda(X)$, the {\em total $\lambda$-invariant of $X$,} we mean the sum of differences
\[ \sum_{j=0}^{p-2}\left( \lambda(\epsilon_j(\KUp)^0(X)) - \lambda(\epsilon_j(\KUp)^{-1}(X))\right) \]
of the $\lambda$-invariants of the Iwasawa modules associated to $X$. 
\end{definition}

\begin{lemma}\label{additivity of lambda}
The total $\lambda$-invariant is equal to the Euler characteristic. Consequently the total $\lambda$-invariant is an additive invariant of finite spectra.
\end{lemma}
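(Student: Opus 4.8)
The plan is to prove the two formulas for the individual $\lambda$-invariants announced in the introduction, namely $\lambda(\epsilon_j(\KUp)^0(X)) = \sum_{i\equiv j \mod{p-1}}\dim_{\Q}H_{2i}(X;\Q)$ and $\lambda(\epsilon_j(\KUp)^{-1}(X)) = \sum_{i\equiv j \mod{p-1}}\dim_{\Q}H_{2i-1}(X;\Q)$; once these are established, summing over $j=0,\dots,p-2$ collapses the residue classes modulo $p-1$ and gives $\lambda(X) = \sum_i\dim_{\Q}H_{2i}(X;\Q) - \sum_i\dim_{\Q}H_{2i-1}(X;\Q) = \sum_k (-1)^k\dim_{\Q}H_k(X;\Q) = \chi(X)$. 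Additivity of the total $\lambda$-invariant then follows from the classical additivity of the Euler characteristic on cofiber sequences.

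The first step is to reduce each $\lambda$-invariant to a $\Zp$-rank. Since $X$ is finite, the Atiyah--Hirzebruch spectral sequence forces $(\KUp)^i(X)$ to be finitely generated over $\Zp$, so each $\epsilon_j(\KUp)^i(X)$ is a finitely generated $\Lambda$-module with no free $\Lambda$-summand; hence it is $\Lambda$-torsion, and by Theorem \ref{mu-invs vanish} its $\mu$-invariant is zero. For any such Iwasawa module $M$ the structure theorem gives a pseudo-isomorphism $M \simeq \bigoplus_k \Lambda/f_k(T)^{n_k}$ with the $f_k$ distinguished, and Weierstrass division identifies $\Lambda/f_k(T)^{n_k}$ with a free $\Zp$-module of rank $n_k\deg f_k$; since a pseudo-isomorphism has finite kernel and cokernel, we conclude $\lambda(M) = \operatorname{rank}_{\Zp}M = \dim_{\Qp}(M\otimes_{\Zp}\Qp)$.

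The second step is to compute the $\Qp$-dimensions of the $\epsilon_j$-eigenspaces of $(\KUp)^i(X)\otimes_{\Z}\Q$. The chain of isomorphisms \eqref{iso 0901}--\eqref{iso 0909} furnishes a $\Gamma'$-equivariant identification $(\KUp)^i(X)\otimes_{\Z}\Q \cong (\KUp)_{-i}(DX\wedge H\Q)$. Now $H\Q\wedge DX$ splits as a wedge of suspensions of $H\Q$, with $\Sigma^{n}H\Q$ occurring with multiplicity $\dim_{\Q}H_{n}(DX;\Q) = \dim_{\Q}H_{-n}(X;\Q)$, while $\pi_*(\KUp\wedge H\Q) = \Qp[\beta^{\pm 1}]$ with the stable Adams operation acting by $\psi^u(\beta) = u\beta$. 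Feeding the first splitting into $(\KUp)_{-i}(-)$ and reading off the $\Delta$-action via the Teichm\"uller character, one finds that the summand coming from $H_{2i}(X;\Q)$ is a $u^i$-eigenspace inside $(\KUp)^0(X)\otimes\Q$, the summand coming from $H_{2i-1}(X;\Q)$ is a $u^i$-eigenspace inside $(\KUp)^{-1}(X)\otimes\Q$, and homology in a degree of one parity contributes only to the $K$-group of the opposite parity, so the two streams do not interfere. Hence $\dim_{\Qp}\epsilon_j((\KUp)^0(X)\otimes\Q) = \sum_{i\equiv j \mod{p-1}}\dim_{\Q}H_{2i}(X;\Q)$ and $\dim_{\Qp}\epsilon_j((\KUp)^{-1}(X)\otimes\Q) = \sum_{i\equiv j \mod{p-1}}\dim_{\Q}H_{2i-1}(X;\Q)$, which together with the first step gives the two announced formulas and therefore $\lambda(X) = \chi(X)$.

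I expect the main obstacle to be the bookkeeping in the second step: one must verify that the isomorphisms \eqref{iso 0901}--\eqref{iso 0909} are $\Gamma'$-equivariant and then chase the degree shifts introduced by Spanier--Whitehead duality and Bott periodicity, so that the homology of $X$ in degree $2i$ (respectively $2i-1$) is matched with exactly the $\epsilon_j$-eigenspace for $j\equiv i \mod{p-1}$ --- in particular matching the convention, implicit in the statement of Theorem \ref{imc 2}, under which $H_{2i-1}(X;\Q)$ feeds into $\epsilon_i(\KUp)^{-1}(X)$. This is a rationalized incarnation of the classical fact that the Chern character diagonalizes the Adams operations, so no genuinely new ideas are needed, but the index conventions have to be pinned down with care. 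Everything else --- the reduction to $\Zp$-ranks and the final summation --- is routine.
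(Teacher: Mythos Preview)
Your proposal is correct and reaches the same formulas for the individual $\lambda$-invariants that the paper proves, but the route is packaged differently. The paper invokes Theorem~\ref{iwasawa invs and rational type} as a black box to replace $X$ by a wedge of spheres with the same rational homology, and then reads off the $\lambda$-invariants from the explicit sphere computations of \cref{KUp of the Spheres} together with additivity under wedge. Your argument is more self-contained: you observe directly that for a finitely generated $\Lambda$-torsion module with vanishing $\mu$-invariant one has $\lambda(M)=\operatorname{rank}_{\Zp}M=\dim_{\Qp}(M\otimes_{\Zp}\Qp)$, and then compute this dimension via the chain \eqref{iso 0901}--\eqref{iso 0909} and the rational splitting of $H\Q\wedge DX$. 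This bypasses both Theorem~\ref{iwasawa invs and rational type} (and the Hahn--Mitchell/Bousfield machinery behind it) and the sphere calculations, at the cost of the $\Gamma'$-equivariance and degree bookkeeping you flag; the paper's route, by contrast, amortizes that bookkeeping across results already proven. Both approaches ultimately rest on the same fact---that the rationalized Adams operations depend only on the rational homotopy type---so the difference is one of organization rather than substance.
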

\begin{proof}
It is probably possible to give an intrinsic proof, based on formal properties of the characteristic polynomial and not reliant on the fact, established by Theorem \ref{iwasawa invs and rational type}, that the characteristic polynomial of each Iwasawa module of a finite spectrum $X$ depends only on the rational homotopy type of $X$. However, the shortest and simplest proof certainly uses that fact, in order to recognize that the $\lambda$-invariant of $\epsilon_j(\KUp)^n(X)$ is equal to the $\lambda$-invariant of $\epsilon_j(\KUp)^0(Y)$ where $Y$ is a wedge of spheres with the same rational homology as $X$. In \cref{KUp of the Spheres}, we calculated the characteristic polynomials and $\lambda$-invariants of the Iwasawa modules $\epsilon_j(\KUp)^0(S^{2i})$. In particular, those calculations yield that
\begin{align*}
 \lambda(\epsilon_j(\KUp)^0(S^{2i})) &= \lambda(\epsilon_j(\KUp)^{-1}(S^{2i-1})) \\ 
  &= \left\{ \begin{array}{ll} 1 &\mbox{\ if\ } j\equiv i \mod p-1 \\ 0 &\mbox{\ otherwise}.\end{array}\right.\\
 \lambda(\epsilon_j(\KUp)^{-1}(S^{2i})) &= \lambda(\epsilon_j(\KUp)^{0}(S^{2i-1})) \\ 
  &= 0.
\end{align*}
From this, and the straightforward observation that $\lambda(\epsilon_j(\KUp)^n(X\vee X^{\prime})) = \lambda(\epsilon_j(\KUp)^n(X)) + \lambda(\epsilon_j(\KUp)^n(X^{\prime}))$ for all finite spectra $X,X^{\prime}$, we have the formula
\begin{align*}
 \lambda(\epsilon_j(\KUp)^0(X)) 
  &= \sum_{n\equiv j \mod p-1} \dim_{\mathbb{Q}}H_{2n}(X) \\
 \lambda(\epsilon_j(\KUp)^{-1}(X)) 
  &= \sum_{n\equiv j \mod p-1} \dim_{\mathbb{Q}}H_{2n-1}(X) ,
\end{align*}
and consequently the total $\lambda$-invariant is equal to the Euler characteristic.
\end{proof}

Theorem \ref{avging coeff thm} and Lemma \ref{additivity of lambda} then jointly imply:
\begin{theorem}\label{lambda and euler char}
Let $X$ be a finite spectrum. Then the graded average of the orders of $n$ consecutive $K(1)$-local homotopy groups of $X$ grows like $\frac{-\lambda(X)}{2} \cdot\log_p(n)$.
\end{theorem}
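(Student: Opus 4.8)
The plan is to obtain Theorem~\ref{lambda and euler char} as an immediate combination of two facts already established in the excerpt: Theorem~\ref{avging coeff thm}, which says that the graded average of the orders of $n$ consecutive $K(1)$-local homotopy groups of a finite spectrum $X$ grows like $\frac{-\chi(X)}{2}\cdot\log_p(n)$, and Lemma~\ref{additivity of lambda}, which identifies the total $\lambda$-invariant $\lambda(X)$ with the Euler characteristic $\chi(X)$. First I would invoke Theorem~\ref{avging coeff thm} to assert that the limit
\[
\lim_{m\rightarrow\infty}\lim_{n\rightarrow\infty}\frac{\frac{1}{n}\sum_{j=1+m}^{n+m}(-1)^j\left|\pi_j L_{K(1)}X\right|}{-\chi(X)\cdot\log_p(n)/2}
\]
exists and equals $1$ (interpreting the statement appropriately in the degenerate case $\chi(X)=0$, i.e.\ the numerator is $o(\log_p n)$). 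Then substituting $\chi(X)=\lambda(X)$ from Lemma~\ref{additivity of lambda} rewrites the same limit with $\lambda(X)$ in place of $\chi(X)$, which is exactly the assertion that the graded average grows like $\frac{-\lambda(X)}{2}\cdot\log_p(n)$.

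So at the level of the final statement there is essentially no remaining work; the substance lies entirely in the two ingredients. I would be explicit that Theorem~\ref{avging coeff thm} itself rests on two things: the additivity of the asymptotic growth rate as an invariant of finite spectra (Proposition~\ref{additivity of sn}, together with the observation that $\left|\pi_n(L_{K(1)}S^0)\right| = o(n\log_p n)$, which licenses the induction on cells), and the direct computation at the sphere, where the closed-form evaluation $s_n = \frac{-1-n}{2}$ of the partial graded averages forces the growth rate $\alpha(S^0)=-1/2$. Since $K_0$ of the category of finite spectra is $\mathbb{Z}$ generated by $[S^0]$ and the growth rate is additive, it is the Euler characteristic multiple of $\alpha(S^0)$, giving $\alpha(X) = -\chi(X)/2$.

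The analogous remark applies to Lemma~\ref{additivity of lambda}: the identity $\lambda(X)=\chi(X)$ follows because, by Theorem~\ref{iwasawa invs and rational type}, each Iwasawa module $\epsilon_j(\KUp)^i(X)$ has $\lambda$-invariant depending only on the rational homotopy type, so one reduces to a wedge of spheres, where the computations of \cref{KUp of the Spheres} give $\lambda(\epsilon_j(\KUp)^0(S^{2i}))=1$ exactly when $j\equiv i\bmod p-1$ and $0$ otherwise (and the odd-sphere/odd-degree analogues), and then additivity over wedges and the alternating sign in the definition of the total $\lambda$-invariant exactly reproduce $\sum_n(-1)^n\dim_{\mathbb{Q}}H_n(X;\mathbb{Q})$.

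Thus the proof I would write is a one-line deduction: \emph{Immediate from Theorem~\ref{avging coeff thm} and Lemma~\ref{additivity of lambda}.} If pressed to identify an obstacle, it is not in this theorem but upstream: the genuinely delicate point is verifying the hypotheses of Proposition~\ref{additivity of sn} uniformly in the cell-by-cell induction (controlling the $\im h_j$ error terms and confirming $\left|\pi_n(X)\right|/(n\log_p n)\to 0$ for an arbitrary finite $X$, not just for $S^0$), and secondarily the careful sandwiching of the sequence \eqref{seq 610} between the subsequences \eqref{seq 611} and \eqref{seq 612} to pin down the limit rather than merely bound it. Once those are in hand, Theorem~\ref{lambda and euler char} is purely formal.
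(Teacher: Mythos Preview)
Your proposal is correct and matches the paper's approach exactly: the paper states Theorem~\ref{lambda and euler char} as an immediate joint consequence of Theorem~\ref{avging coeff thm} and Lemma~\ref{additivity of lambda}, with no additional argument. Your summary of how those two ingredients are proved is also accurate.
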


\bibliographystyle{plain}
\bibliography{project_refs}

\begin{thebibliography}{10}

\bibitem{MR0251716}
J.~F. Adams.
\newblock Lectures on generalised cohomology.
\newblock In {\em Category {T}heory, {H}omology {T}heory and their
  {A}pplications, {III} ({B}attelle {I}nstitute {C}onference, {S}eattle,
  {W}ash., 1968, {V}ol. {T}hree)}, volume No. 99 of {\em Lecture Notes in
  Math.}, pages 1--138. Springer, Berlin-New York, 1969.

\bibitem{anderson}
D.~W. Anderson.
\newblock {\em Universal {C}oefficient {T}heorems for $K$-theory}.
\newblock unpublished mimeographed notes. Berkeley, 1969.

\bibitem{MR0796907}
A.~K. Bousfield.
\newblock On the homotopy theory of {$K$}-local spectra at an odd prime.
\newblock {\em Amer. J. Math.}, 107(4):895--932, 1985.

\bibitem{MR0110096}
Edgar~H. Brown, Jr. and Arthur~H. Copeland, Jr.
\newblock An homology analogue of {P}ostnikov systems.
\newblock {\em Michigan Math. J.}, 6:313--330, 1959.

\bibitem{MR2217048}
John Coates, Takako Fukaya, Kazuya Kato, Ramdorai Sujatha, and Otmar Venjakob.
\newblock The {$\rm GL_2$} main conjecture for elliptic curves without complex
  multiplication.
\newblock {\em Publ. Math. Inst. Hautes \'{E}tudes Sci.}, (101):163--208, 2005.

\bibitem{MR0198464}
Albrecht Dold.
\newblock {\em Halbexakte {H}omotopiefunktoren}, volume~12 of {\em Lecture
  Notes in Mathematics}.
\newblock Springer-Verlag, Berlin-New York, 1966.

\bibitem{MR0254838}
Albrecht Dold.
\newblock {\em On general cohomology. {C}hapters 1--9}, volume 1968 of {\em
  Lectures given at the Nordic Summer School in Mathematics (June 16th-July
  6th}.
\newblock Matematisk Institut, Aarhus Universitet, Aarhus, 1968.

\bibitem{MR0108791}
B.~Eckmann and P.~J. Hilton.
\newblock On the homology and homotopy decomposition of continuous maps.
\newblock {\em Proc. Nat. Acad. Sci. U.S.A.}, 45:372--375, 1959.

\bibitem{MR1417719}
A.~D. Elmendorf, I.~Kriz, M.~A. Mandell, and J.~P. May.
\newblock {\em Rings, modules, and algebras in stable homotopy theory},
  volume~47 of {\em Mathematical Surveys and Monographs}.
\newblock American Mathematical Society, Providence, RI, 1997.
\newblock With an appendix by M. Cole.

\bibitem{MR0528968}
Bruce Ferrero and Lawrence~C. Washington.
\newblock The {I}wasawa invariant {$\mu \sb{p}$} vanishes for abelian number
  fields.
\newblock {\em Ann. of Math. (2)}, 109(2):377--395, 1979.

\bibitem{MR2327028}
Rebekah Hahn and Stephen Mitchell.
\newblock Iwasawa theory for {$K(1)$}-local spectra.
\newblock {\em Trans. Amer. Math. Soc.}, 359(11):5207--5238, 2007.

\bibitem{MR0360526}
Kenkichi Iwasawa.
\newblock {\em Lectures on {$p$}-adic {$L$}-functions}, volume No. 74 of {\em
  Annals of Mathematics Studies}.
\newblock Princeton University Press, Princeton, NJ; University of Tokyo Press,
  Tokyo, 1972.

\bibitem{MR0163900}
Tomio Kubota and Heinrich-Wolfgang Leopoldt.
\newblock Eine {$p$}-adische {T}heorie der {Z}etawerte. {I}. {E}inf\"{u}hrung
  der {$p$}-adischen {D}irichletschen {$L$}-{F}unktionen.
\newblock {\em J. Reine Angew. Math.}, 214/215:328--339, 1964.

\bibitem{MR0742853}
B.~Mazur and A.~Wiles.
\newblock Class fields of abelian extensions of {${\bf Q}$}.
\newblock {\em Invent. Math.}, 76(2):179--330, 1984.

\bibitem{MR1367298}
Stephen~A. Mitchell.
\newblock On {$p$}-adic topological {$K$}-theory.
\newblock In {\em Algebraic {$K$}-theory and algebraic topology ({L}ake
  {L}ouise, {AB}, 1991)}, volume 407 of {\em NATO Adv. Sci. Inst. Ser. C: Math.
  Phys. Sci.}, pages 197--204. Kluwer Acad. Publ., Dordrecht, 1993.

\bibitem{MR2181837}
Stephen~A. Mitchell.
\newblock {$K(1)$}-local homotopy theory, {I}wasawa theory and algebraic
  {$K$}-theory.
\newblock In {\em Handbook of {$K$}-theory. {V}ol. 1, 2}, pages 955--1010.
  Springer, Berlin, 2005.

\bibitem{MR737778}
Douglas~C. Ravenel.
\newblock Localization with respect to certain periodic homology theories.
\newblock {\em Amer. J. Math.}, 106(2):351--414, 1984.

\bibitem{salch2023kulocal}
A.~Salch.
\newblock {$KU$}-local zeta-functions of finite {CW}-complexes.
\newblock {\em accepted to Trans. Amer. Math. Soc.}, 2024.

\bibitem{sharifi}
Romyar Sharifi.
\newblock Iwasawa theory.
\newblock lecture notes available online at
  \url{https://www.math.ucla.edu/~sharifi/iwasawa.pdf}.

\bibitem{MR3840072}
Romyar Sharifi.
\newblock Iwasawa theory: a climb up the tower.
\newblock {\em Notices Amer. Math. Soc.}, 66(1):16--26, 2019.

\bibitem{MR1924402}
Otmar Venjakob.
\newblock On the structure theory of the {I}wasawa algebra of a {$p$}-adic
  {L}ie group.
\newblock {\em J. Eur. Math. Soc. (JEMS)}, 4(3):271--311, 2002.

\bibitem{MR1421575}
Lawrence~C. Washington.
\newblock {\em Introduction to cyclotomic fields}, volume~83 of {\em Graduate
  Texts in Mathematics}.
\newblock Springer-Verlag, New York, second edition, 1997.

\bibitem{MR0388375}
Zen-ichi Yosimura.
\newblock Universal coefficient sequences for cohomology theories of {${\rm
  CW}$}-spectra.
\newblock {\em Osaka Math. J.}, 12(2):305--323, 1975.

\end{thebibliography}

\end{document}